\providecommand{\U}[1]{\protect\rule{.1in}{.1in}}
\newtheorem{theorem}{Theorem}
\newtheorem{corollary}[theorem]{Corollary}
\newtheorem{lemma}[theorem]{Lemma}
\newtheorem{notation}[theorem]{Notation}
\newtheorem{proposition}[theorem]{Proposition}
\newenvironment{proof}[1][Proof]{\textbf{#1.} }{\ \rule{0.5em}{0.5em}}
\begin{document}

\title{Axiomatic Differential Geometry II-3\\-Its Developments-\\Chapter 3: The General Jacobi Identity}
\author{Hirokazu Nishimura\\Institute of Mathematics\\University of Tsukuba\\Tsukuba, Ibaraki, 305-8571, JAPAN}
\maketitle

\begin{abstract}
As the fourth paper of our series of papers concerned with axiomatic
differential geometry, this paper is devoted to the general Jacobi identity
supporting the Jacobi identity of vector fields. The general Jacobi identity
can be regarded as one of the few fundamental results belonging properly to smootheology.

\end{abstract}

\section{Introduction}

It is well known in traditional differential geometry that the totality of
vector fields on a smooth manifold forms a Lie algebra. The proof of this fact
is tremendously easy, because we can identity vector fields with derivations
within the particular category that orthodox differential geometers have
indulged in. An axiomatic treatment of differential geometry emancipates
differential geometers from this comfortable adherence to their favorite
category of smooth manifolds and forces them to confront the infinitesimal
structure per se barehanded.

The Jacobi identity occupies the central position in the structure of a Lie
algebra, and we stumbled upon the \textit{general} Jacobi identity supporting
the Jacobi identity of vector fields from behind the very vale of
infinitesimal structures within the framework of synthetic differential
geometry in the previous century, for which the reader is referred to
\cite{nishi-1}, \cite{nishi-2} and \cite{nishi-3}. This paper is devoted to
the general Jacobi identity within our axiomatics of differential geometry,
which will play a predominant role in a subsequent paper dealing with the
Fr\"{o}licher-Nijenhuis calculus.

Our axiomatic differential geometry is an attempt to grasp the infinitesimal
structure without fringes or frills. It seems that the term
''\textit{smootheology}'' or ''\textit{diffeology}'' is gaining momentum for
the study of such an infinitesimal structure. We think that the general Jacobi
identity is one of the few fundamental results indigenous to smootheology.
This infinitesimal structure lies at the very core of not only differential
geometry but also many pure or applied branches of mathematics. We assume that
the reader is familiar with our axiomatic framework of differential geometry
presented in \cite{nishi-5} and \cite{nishi-7}. Thus we are working within a
DG-category
\[
\left(  \mathcal{K},\mathbb{R},\mathbf{T},\alpha\right)
\]
in the sense of \cite{nishi-7}. We always assume that $M$\ is a microlinear
and Weil-exponentiable object in the category $\mathcal{K}$.

The general Jacobi identity will be dealt with in Section \ref{s3.2}, which
will be preceded by the more elementary treatment of the primordial general
Jacobi identity in Section \ref{s3.1}. The final section is devoted to the
derivation of the Jacobi identity of vector fields from the general Jacobi
identity, in which the reader is assumed to be familiar with \cite{nishi-6}.

\section{Simplicial Sets}

We need to fix notation and terminology for simplicial objects, which form an
important subclass of infinitesimal objects. \textit{Simplicial objects} are
infinitesimal objects of the form
\begin{align*}
&  D^{n}\{\mathfrak{p}\}\\
&  =\{(d_{1},...,d_{n})\in D^{n}\mid d_{i_{1}}...d_{i_{k}}=0\text{ }%
(\forall(i_{1},...,i_{k})\in\mathfrak{p)\}}%
\end{align*}
where $\mathfrak{p}$\ is a finite set of finite sequences $(i_{1},...,i_{k}%
)$\ of natural numbers between $1$ and $n$, including the endpoints, with
$i_{1}<...<i_{k}$. If $\mathfrak{p}$\ is empty, $D^{n}\{\mathfrak{p}\}$ is
$D^{n}$ itself. If $\mathfrak{p}$ consists of all the binary sequences, then
$D^{n}\{\mathfrak{p}\}$ represents $D(n)$ in the standard terminology of SDG.
Given two simplicial objects $D^{m}\{\mathfrak{p}\}$\ and $D^{n}%
\{\mathfrak{q}\}$, we define a simplicial object $D^{m}\{\mathfrak{p}\}\oplus
D^{n}\{\mathfrak{q}\}$ to be
\[
D^{m+n}\{\mathfrak{p}\oplus\mathfrak{q}\}
\]
where
\begin{align*}
&  \mathfrak{p}\oplus\mathfrak{q}\\
&  =\mathfrak{p}\cup\{(j_{1}+m,...,j_{k}+m)\mid(j_{1},...,j_{k})\in
\mathfrak{q}\}\\
\cup\{(i,j+m) &  \mid1\leq i\leq m,\ 1\leq j\leq n\}
\end{align*}
Since the operation $\oplus$\ is associative, we can combine any finite number
of simplicial objects by $\oplus$ without bothering about how to insert
parentheses. Given morphisms of simplicial objects $\Phi_{i}:D^{m_{i}%
}\{\mathfrak{p}_{i}\}\rightarrow D^{m}\{\mathfrak{p}\}\ (1\leq i\leq n)$,
there exists a unique morphism of simplicial objects $\Phi:D^{m_{1}%
}\{\mathfrak{p}_{1}\}\oplus...\oplus D^{m_{n}}\{\mathfrak{p}_{n}\}\rightarrow
D^{m}\{\mathfrak{p}\}$ whose restriction to $D^{m_{i}}\{\mathfrak{p}_{i}\}$
coincides with $\Phi_{i}$ for each $i$. We denote this $\Phi$\ by $\Phi
_{1}\oplus...\oplus\Phi_{n}$. We write $D(n)$ for $\{(d,...,d)\in D^{n}\mid
d_{i}d_{j}=0$ for any $i\neq j\}$.

\section{\label{s3.1}The Preliminary Identity}

The principal objective in this paper is to give the general Jacobi identity
and its proof. Our harder treatment of the general Jacobi identity in the
coming section is preceded by a simpler treatment of the primordial general
Jacobi identity in this section, because the latter is easy to grasp
intuitively so that it prepares the reader for the coming general Jacobi identity.

\begin{proposition}
\label{t3.1}The diagram
\[%
\begin{array}
[c]{ccccc}
&  & \mathrm{id}_{M}\otimes\mathcal{W}_{\varphi} &  & \\
& M\otimes\mathcal{W}_{D^{3}\{(1,3),(2,3)\}} & \rightarrow & M\otimes
\mathcal{W}_{D^{2}} & \\
\mathrm{id}_{M}\otimes\mathcal{W}_{\psi} & \downarrow &  & \downarrow &
\mathrm{id}_{M}\otimes\mathcal{W}_{i_{D(2)}^{D^{2}}}\\
& M\otimes\mathcal{W}_{D^{2}} & \rightarrow & M\otimes\mathcal{W}_{D(2)} & \\
&  & \mathrm{id}_{M}\otimes\mathcal{W}_{i_{D(2)}^{D^{2}}} &  &
\end{array}
\]
is a pullback diagram, where the assumptive mapping $\varphi:D^{2}\rightarrow
D^{3}\{(1,3),(2,3)\}$ is
\[
(d_{1},d_{2})\in D^{2}\mapsto(d_{1},d_{2},0)\in D^{3}\{(1,3),(2,3)\}
\]
while the assumptive mapping $\psi:D^{2}\rightarrow D^{3}\{(1,3),(2,3)\}$ is
\[
(d_{1},d_{2})\in D^{2}\mapsto(d_{1},d_{2},d_{1}d_{2})\in D^{3}\{(1,3),(2,3)\}
\]

\end{proposition}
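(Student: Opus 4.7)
My plan is to exploit the microlinearity and Weil-exponentiability of $M$ to reduce the statement to a pullback of Weil algebras. Under these hypotheses the functor $M \otimes \mathcal{W}_{(-)}$ carries pullbacks of Weil algebras (equivalently, quasi-colimits of infinitesimal objects) to pullbacks in $\mathcal{K}$, so it suffices to show that the analogous square obtained by stripping off $M\otimes(-)$ and regarding $\mathcal{W}_{\varphi}$, $\mathcal{W}_{\psi}$, $\mathcal{W}_{i_{D(2)}^{D^{2}}}$ as $\mathbb{R}$-algebra homomorphisms is a pullback of $\mathbb{R}$-algebras.

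Commutativity at the infinitesimal level is immediate: for $(d_1,d_2)\in D(2)$, the relation $d_1 d_2 = 0$ gives
\[
\varphi\circ i_{D(2)}^{D^{2}}(d_1,d_2) = (d_1,d_2,0) = (d_1,d_2,d_1 d_2) = \psi\circ i_{D(2)}^{D^{2}}(d_1,d_2),
\]
so the outer square in the statement commutes after applying $\mathrm{id}_M\otimes\mathcal{W}_{(-)}$.

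For the pullback property, I would work with the explicit presentations
\begin{align*}
\mathcal{W}_{D^{2}} &\cong \mathbb{R}[x_1,x_2]/(x_1^{2},x_2^{2}),\\
\mathcal{W}_{D(2)} &\cong \mathbb{R}[x_1,x_2]/(x_1^{2},x_2^{2},x_1 x_2),\\
\mathcal{W}_{D^{3}\{(1,3),(2,3)\}} &\cong \mathbb{R}[x_1,x_2,x_3]/(x_1^{2},x_2^{2},x_3^{2},x_1 x_3,x_2 x_3),
\end{align*}
whose $\mathbb{R}$-dimensions are $4$, $3$, and $5$ respectively (in the third algebra one observes $x_1 x_2 x_3 = (x_1 x_3)x_2 = 0$, leaving the basis $\{1,x_1,x_2,x_3,x_1 x_2\}$). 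In these coordinates $\mathcal{W}_{\varphi}$ sends $x_3\mapsto 0$, $\mathcal{W}_{\psi}$ sends $x_3\mapsto x_1 x_2$, both fix $x_1,x_2$, and $\mathcal{W}_{i_{D(2)}^{D^{2}}}$ annihilates $x_1 x_2$. A direct linear calculation then shows that a general element $a + b x_1 + c x_2 + d x_3 + e x_1 x_2$ is mapped to the compatible pair $\bigl(a + b x_1 + c x_2 + e x_1 x_2,\ a + b x_1 + c x_2 + (d+e) x_1 x_2\bigr)$, while conversely every compatible pair $(a + b x_1 + c x_2 + e_1 x_1 x_2,\ a + b x_1 + c x_2 + e_2 x_1 x_2)$ has a unique preimage, obtained by setting $e = e_1$ and $d = e_2 - e_1$. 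This establishes the bijectivity of the natural map into the pullback.

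The only real obstacle is this coefficient bookkeeping; the conceptual content is carried entirely by microlinearity, and once the algebraic pullback of Weil algebras is in place the proposition follows at once.
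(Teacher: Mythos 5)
Your proof is correct and follows essentially the same route as the paper: invoke microlinearity to reduce the statement to the corresponding pullback square of Weil algebras. The only difference is that you additionally verify that algebra-level pullback by an explicit basis computation, which the paper merely asserts here (it carries out the analogous computation only later, in Lemma \ref{t3.4}); your bookkeeping with the bases $\{1,x_1,x_2,x_1x_2\}$ and $\{1,x_1,x_2,x_3,x_1x_2\}$ is accurate.
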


\begin{proof}
This follows from the microlinearity of $M$ and the pullback diagram of Weil
algebras
\[%
\begin{array}
[c]{ccccc}
&  & \mathcal{W}_{\varphi} &  & \\
& \mathcal{W}_{D^{3}\{(1,3),(2,3)\}} & \rightarrow & \mathcal{W}_{D^{2}} & \\
\mathcal{W}_{\psi} & \downarrow &  & \downarrow & \mathcal{W}_{i_{D(2)}%
^{D^{2}}}\\
& \mathcal{W}_{D^{2}} & \rightarrow & \mathcal{W}_{D(2)} & \\
&  & \mathcal{W}_{i_{D(2)}^{D^{2}}} &  &
\end{array}
\]

\end{proof}

\begin{corollary}
We have
\[
M\otimes\mathcal{W}_{D^{3}\{(1,3),(2,3)\}}=\left(  M\otimes\mathcal{W}_{D^{2}%
}\right)  \times_{M\otimes\mathcal{W}_{D(2)}}\left(  M\otimes\mathcal{W}%
_{D^{2}}\right)
\]

\end{corollary}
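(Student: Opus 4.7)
The plan is to recognize that the corollary is simply a translation of Proposition \ref{t3.1} into the notation of fibered products, with nothing extra to prove. A commutative square
\[
\begin{array}{ccc} A & \rightarrow & B \\ \downarrow & & \downarrow \\ C & \rightarrow & D \end{array}
\]
being a pullback in $\mathcal{K}$ means, by definition, that the upper-left vertex $A$, equipped with its two outgoing arrows, realizes the universal property of $B\times_D C$; so up to a canonical isomorphism one may write $A = B\times_D C$.

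To apply this, I would simply identify the vertices in the diagram of Proposition \ref{t3.1}: $A$ is $M\otimes \mathcal{W}_{D^3\{(1,3),(2,3)\}}$, both $B$ and $C$ are $M\otimes \mathcal{W}_{D^2}$, and $D$ is $M\otimes \mathcal{W}_{D(2)}$, with the two morphisms to $D$ both given by $\mathrm{id}_M\otimes \mathcal{W}_{i^{D^2}_{D(2)}}$. The two canonical projections from the fibered product onto its factors are then furnished by $\mathrm{id}_M\otimes \mathcal{W}_{\varphi}$ and $\mathrm{id}_M\otimes \mathcal{W}_{\psi}$.

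There is essentially no obstacle: all the substantive content, namely the verification that the square is a pullback, has already been discharged by Proposition \ref{t3.1} (which in turn rests on the microlinearity of $M$ together with the corresponding pullback of Weil algebras). Intuitively, the corollary says that a microcube-like element of $M$ indexed by $D^3\{(1,3),(2,3)\}$ is the same data as a pair of infinitesimal $D^2$-squares in $M$ that agree on the common diagonal $D(2)$, which is the viewpoint convenient for the primordial Jacobi identity to follow.
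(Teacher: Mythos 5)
Your proposal is correct and is exactly the paper's (implicit) argument: the corollary is nothing but the restatement, in fibered-product notation, of the pullback property established in Proposition \ref{t3.1}, with the two canonical projections given by $\mathrm{id}_{M}\otimes\mathcal{W}_{\varphi}$ and $\mathrm{id}_{M}\otimes\mathcal{W}_{\psi}$. The paper offers no separate proof precisely because, as you observe, all the substantive content has already been discharged by the proposition.
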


\begin{notation}
We will write
\[
\zeta^{\overset{\cdot}{-}}:\left(  M\otimes\mathcal{W}_{D^{2}}\right)
\times_{M\otimes\mathcal{W}_{D(2)}}\left(  M\otimes\mathcal{W}_{D^{2}}\right)
\rightarrow M\otimes\mathcal{W}_{D}%
\]
for the morphism
\begin{align*}
& \left(  \mathrm{id}_{M}\otimes\mathcal{W}_{d\in D\mapsto(0,0,d)\in
D^{3}\{(1,3),(2,3)\}},\mathcal{W}_{i_{D(2)}^{D^{2}}}\circ\mathcal{W}_{\varphi
}\right) \\
& :\mathcal{W}_{D^{2}}\times_{\mathcal{W}_{D(2)}}\mathcal{W}_{D^{2}%
}=\mathcal{W}_{D^{2}}\times_{\mathcal{W}_{D(2)}}\mathcal{W}_{D^{2}}\rightarrow
M\otimes\mathcal{W}_{D}%
\end{align*}

\end{notation}

The following is the prototype for the general Jacobi identity.

\begin{theorem}
\label{t3.2}(\underline{The Primordial General Jacobi Identity}) The three
morphisms
\begin{align*}
\zeta^{\ast_{2}\overset{\cdot}{-}\ast_{1}}  & :\underset{1}{\left(
M\otimes\mathcal{W}_{D^{2}}\right)  }\times_{M\otimes\mathcal{W}_{D(2)}%
}\underset{2}{\left(  M\otimes\mathcal{W}_{D^{2}}\right)  }\times
_{M\otimes\mathcal{W}_{D(2)}}\underset{3}{\left(  M\otimes\mathcal{W}_{D^{2}%
}\right)  }\\
& \rightarrow\underset{1}{\left(  M\otimes\mathcal{W}_{D^{2}}\right)  }%
\times_{M\otimes\mathcal{W}_{D(2)}}\underset{2}{\left(  M\otimes
\mathcal{W}_{D^{2}}\right)  }\,\underrightarrow{\zeta^{\overset{\cdot}{-}}%
}\,\left(  M\otimes\mathcal{W}_{D}\right)  \times\left(  M\otimes
\mathcal{W}_{D(2)}\right)
\end{align*}
\begin{align*}
\zeta^{\ast_{3}\overset{\cdot}{-}\ast_{2}}  & :\underset{1}{\left(
M\otimes\mathcal{W}_{D^{2}}\right)  }\times_{M\otimes\mathcal{W}_{D(2)}%
}\underset{2}{\left(  M\otimes\mathcal{W}_{D^{2}}\right)  }\times
_{M\otimes\mathcal{W}_{D(2)}}\underset{3}{\left(  M\otimes\mathcal{W}_{D^{2}%
}\right)  }\\
& \rightarrow\underset{2}{\left(  M\otimes\mathcal{W}_{D^{2}}\right)  }%
\times_{M\otimes\mathcal{W}_{D(2)}}\underset{3}{\left(  M\otimes
\mathcal{W}_{D^{2}}\right)  }\,\underrightarrow{\zeta^{\overset{\cdot}{-}}%
}\,\left(  M\otimes\mathcal{W}_{D}\right)  \times\left(  M\otimes
\mathcal{W}_{D(2)}\right)
\end{align*}
\begin{align*}
\zeta^{\ast_{1}\overset{\cdot}{-}\ast_{3}}  & :\underset{1}{\left(
M\otimes\mathcal{W}_{D^{2}}\right)  }\times_{M\otimes\mathcal{W}_{D(2)}%
}\underset{2}{\left(  M\otimes\mathcal{W}_{D^{2}}\right)  }\times
_{M\otimes\mathcal{W}_{D(2)}}\underset{3}{\left(  M\otimes\mathcal{W}_{D^{2}%
}\right)  }\\
& \rightarrow\underset{3}{\left(  M\otimes\mathcal{W}_{D^{2}}\right)  }%
\times_{M\otimes\mathcal{W}_{D(2)}}\underset{1}{\left(  M\otimes
\mathcal{W}_{D^{2}}\right)  }\,\underrightarrow{\zeta^{\overset{\cdot}{-}}%
}\,\left(  M\otimes\mathcal{W}_{D}\right)  \times\left(  M\otimes
\mathcal{W}_{D(2)}\right)
\end{align*}
sum up only to vanish, where the numbers under $\left(  M\otimes
\mathcal{W}_{D^{2}}\right)  $ are given simply so as for the reader to relate
each occurrence of $\left(  M\otimes\mathcal{W}_{D^{2}}\right)  $\ to another,
and the unlabeled arrows are the canonical projections.
\end{theorem}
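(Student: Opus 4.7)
The plan is to reduce the identity to a transparent algebraic cancellation inside a single, larger Weil algebra. First, by iterating Proposition \ref{t3.1} together with microlinearity of $M$, I would identify the triple fibre product in the statement with $M\otimes\mathcal{W}_{E}$, where
\[
E=D^{4}\{(1,3),(2,3),(1,4),(2,4),(3,4)\}
\]
is the simplicial object in which the only surviving nontrivial binary product is $d_{1}d_{2}$, so $\mathcal{W}_{E}$ has basis $1,\epsilon_{1},\epsilon_{2},\epsilon_{3},\epsilon_{4},\epsilon_{1}\epsilon_{2}$. Under this identification, the three canonical projections to the factors $\underset{i}{\left(M\otimes\mathcal{W}_{D^{2}}\right)}$ correspond to the embeddings $D^{2}\rightarrow E$ sending $(d_{1},d_{2})$ to $(d_{1},d_{2},0,0)$, to $(d_{1},d_{2},d_{1}d_{2},0)$, and to $(d_{1},d_{2},0,d_{1}d_{2})$ respectively, while the common projection to $M\otimes\mathcal{W}_{D(2)}$ kills $\epsilon_{3}$, $\epsilon_{4}$, and $\epsilon_{1}\epsilon_{2}$.

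Next, I would read each of the three morphisms $\zeta^{\ast_{a}\overset{\cdot}{-}\ast_{b}}$ in this language. By the definition of $\zeta^{\overset{\cdot}{-}}$, its first (tangent-bundle) component is the pullback of the generic element of $M\otimes\mathcal{W}_{E}$ along a morphism $D\rightarrow E$ that extracts the difference of the two "$d_{1}d_{2}$-parts" at issue. Writing the generic element as $c+\alpha\epsilon_{1}+\beta\epsilon_{2}+\gamma\epsilon_{3}+\delta\epsilon_{4}+\nu\epsilon_{1}\epsilon_{2}$, one finds that the three factor projections produce microsquares with $d_{1}d_{2}$-coefficients $\nu$, $\gamma+\nu$, and $\delta+\nu$, so the three strong-difference first components come out to $\gamma$, $\delta-\gamma$, and $-\delta$ respectively. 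The three second components (in $M\otimes\mathcal{W}_{D(2)}$) all coincide with the common restriction of the generic element to $D(2)$. Hence the "sum", which is fibrewise addition in the tangent-bundle factor induced by the Weil co-addition $D(3)\rightarrow D$, is well defined and vanishes by the algebraic identity $\gamma+(\delta-\gamma)+(-\delta)=0$.

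The principal obstacle is purely combinatorial: one must verify the candidate isomorphism $M\otimes\mathcal{W}_{E}\cong$ (triple fibre product) by constructing and composing the appropriate Weil-algebra pullbacks, and one must check that $\zeta^{\overset{\cdot}{-}}$, precomposed with each of the three pair projections, indeed extracts the corresponding coefficient in $\mathcal{W}_{E}$. Both steps are routine once the Weil-algebra homomorphisms are written down, but careful bookkeeping is required to keep track of which $\epsilon_{i}$ plays the role of the extra "$d_{1}d_{2}$-deformation" for which of the three pairs, and to confirm that the ambient addition on $M\otimes\mathcal{W}_{D}$ is the one induced by $D(3)\rightarrow D$. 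With this dictionary in hand, the theorem reduces to the trivial cancellation $\gamma+(\delta-\gamma)+(-\delta)=0$ transported across microlinearity.
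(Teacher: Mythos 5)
Your proposal is correct and follows essentially the same route as the paper: the paper likewise identifies the triple fibre product with $M\otimes\mathcal{W}_{E}$ for $E=D^{4}\{(1,3),(2,3),(1,4),(2,4),(3,4)\}$ via the same three embeddings (Lemma \ref{t3.4}), identifies the three pair-projections $M\otimes\mathcal{W}_{E}\rightarrow M\otimes\mathcal{W}_{C}$ (Lemma \ref{t3.6}) so that the three strong differences become the coefficient extractions $d\mapsto(0,0,d,0)$, $d\mapsto(0,0,-d,d)$, $d\mapsto(0,0,0,-d)$ — exactly your $\gamma$, $\delta-\gamma$, $-\delta$ — and concludes by the same telescoping cancellation through $D(3)\rightarrow D$. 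The "routine bookkeeping" you defer is precisely the content of the paper's Theorem \ref{t3.5} and Lemma \ref{t3.6}.
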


The proof of Theorem \ref{t3.2} is based completely upon the following theorem.

\begin{theorem}
\label{t3.3}The diagram
\[%
\begin{array}
[c]{ccccccc}
& \mathrm{id}_{M}\otimes\mathcal{W}_{i_{D(2)}^{D^{2}}} &  & M\otimes
\mathcal{W}_{D^{2}} &  & \mathrm{id}_{M}\otimes\mathcal{W}_{i_{D(2)}^{D^{2}}}
& \\
&  & \swarrow & \uparrow & \searrow &  & \\
& M\otimes\mathcal{W}_{D(2)} &  & M\otimes\mathcal{W}_{E} &  & M\otimes
\mathcal{W}_{D(2)} & \\
\mathrm{id}_{M}\otimes\mathcal{W}_{i_{D(2)}^{D^{2}}} & \uparrow & \swarrow &
& \searrow & \uparrow & \mathrm{id}_{M}\otimes\mathcal{W}_{i_{D(2)}^{D^{2}}}\\
& M\otimes\mathcal{W}_{D^{2}} &  &  &  & M\otimes\mathcal{W}_{D^{2}} & \\
&  & \searrow &  & \swarrow &  & \\
& \mathrm{id}_{M}\otimes\mathcal{W}_{i_{D(2)}^{D^{2}}} &  & M\otimes
\mathcal{W}_{D(2)} &  & \mathrm{id}_{M}\otimes\mathcal{W}_{i_{D(2)}^{D^{2}}}
&
\end{array}
\]
is a limit diagram, where the assumptive object $E$ is
\[
D^{4}\{(1,3),(2,3),(1,4),(2,4),(3,4)\}
\]
and the assumptive mapping $i_{D(2)}^{D^{2}}:D(2)\rightarrow D^{2}$ is
$(d_{1},d_{2})\in D(2)\mapsto(d_{1},d_{2})\in D^{2}$, while the three unnamed
arrows $M\otimes\mathcal{W}_{E}\rightarrow M\otimes\mathcal{W}_{D^{2}}$ are
$\mathrm{id}_{M}\otimes\mathcal{W}_{l_{i}}$ $(i=1,2,3)$ counterclockwise from
the top with the assumptive mappings $l_{i}:D^{2}\rightarrow E$ $(i=1,2,3)$
being
\begin{align*}
l_{1} &  :(d_{1},d_{2})\in D^{2}\mapsto(d_{1},d_{2},0,0)\in E\\
l_{2} &  :(d_{1},d_{2})\in D^{2}\mapsto(d_{1},d_{2},d_{1}d_{2},0)\in E\\
l_{3} &  :(d_{1},d_{2})\in D^{2}\mapsto(d_{1},d_{2},0,d_{1}d_{2})\in E
\end{align*}

\end{theorem}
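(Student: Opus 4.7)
The plan is to follow the template of the proof of Proposition \ref{t3.1}: by microlinearity of $M$, the functor $M\otimes\mathcal{W}_{(-)}$ sends limit diagrams of Weil algebras to limit diagrams in $\mathcal{K}$, so it suffices to prove that the diagram of Weil algebras obtained by erasing every $\mathrm{id}_{M}\otimes$ is itself a limit diagram.

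First I would record presentations. Because $E=D^{4}\{(1,3),(2,3),(1,4),(2,4),(3,4)\}$, the monomials in $d_{1},d_{2},d_{3},d_{4}$ surviving the defining relations are precisely $1,d_{1},d_{2},d_{3},d_{4},d_{1}d_{2}$, so $\mathcal{W}_{E}$ is free of rank six. Likewise $\mathcal{W}_{D^{2}}$ has basis $\{1,d_{1},d_{2},d_{1}d_{2}\}$ and $\mathcal{W}_{D(2)}$ has basis $\{1,d_{1},d_{2}\}$; the restriction $\mathcal{W}_{i_{D(2)}^{D^{2}}}$ simply kills the $d_{1}d_{2}$-term, while the three maps $\mathcal{W}_{l_{i}}$ fix $d_{1},d_{2}$ and send the pair $(d_{3},d_{4})$ to $(0,0)$, $(d_{1}d_{2},0)$, and $(0,d_{1}d_{2})$, respectively.

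Verifying the universal property is then a direct check. A triple $(u_{1},u_{2},u_{3})\in\mathcal{W}_{D^{2}}^{\,3}$ is compatible with the six restriction maps if and only if $u_{1},u_{2},u_{3}$ share a common linear part $b_{0}+b_{1}d_{1}+b_{2}d_{2}$, with the three $d_{1}d_{2}$-coefficients $c_{1},c_{2},c_{3}$ unconstrained, so the space of compatible triples is six-dimensional. An element $w=a_{0}+a_{1}d_{1}+a_{2}d_{2}+a_{3}d_{3}+a_{4}d_{4}+a_{12}d_{1}d_{2}\in\mathcal{W}_{E}$ is carried by $(\mathcal{W}_{l_{1}},\mathcal{W}_{l_{2}},\mathcal{W}_{l_{3}})$ to the compatible triple with common linear part $a_{0}+a_{1}d_{1}+a_{2}d_{2}$ and $d_{1}d_{2}$-coefficients $(a_{12},\,a_{3}+a_{12},\,a_{4}+a_{12})$; this is an $\mathbb{R}$-linear isomorphism onto the space of compatible triples, with inverse reading off $(a_{3},a_{4},a_{12})=(c_{2}-c_{1},\,c_{3}-c_{1},\,c_{1})$ from the data $(b_{0},b_{1},b_{2},c_{1},c_{2},c_{3})$.

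The main obstacle is purely bookkeeping: identifying each $\mathcal{W}_{D(2)}$ corner with the two adjacent $\mathcal{W}_{D^{2}}$ factors in the cyclic diagram and tracking the six restriction maps correctly so that compatibility is formulated unambiguously. Once that is arranged, the $\mathbb{R}$-linear isomorphism above together with microlinearity close the argument, in complete parallel with Proposition \ref{t3.1}.
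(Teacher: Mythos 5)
Your proposal is correct and takes essentially the same route as the paper: the paper also reduces, via microlinearity, to the corresponding limit diagram of Weil algebras (Lemma \ref{t3.4}) and then performs the same coefficient computation, concluding that a compatible triple $(\gamma_{1},\gamma_{2},\gamma_{3})$ lifts uniquely to $\gamma=a+a_{1}X_{1}+a_{2}X_{2}+a_{12}X_{1}X_{2}+(b_{12}-a_{12})X_{3}+(c_{12}-a_{12})X_{4}$ in $\mathcal{W}_{E}$. Your inverse formula $(a_{3},a_{4},a_{12})=(c_{2}-c_{1},\,c_{3}-c_{1},\,c_{1})$ is precisely that expression.
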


\begin{corollary}
We have
\begin{align*}
& \left(  M\otimes\mathcal{W}_{D^{2}}\right)  \times_{M\otimes\mathcal{W}%
_{D(2)}}\left(  M\otimes\mathcal{W}_{D^{2}}\right)  \times_{M\otimes
\mathcal{W}_{D(2)}}\left(  M\otimes\mathcal{W}_{D^{2}}\right) \\
& =M\otimes\mathcal{W}_{E}%
\end{align*}

\end{corollary}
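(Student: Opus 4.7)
The plan is to deduce the corollary directly from Theorem \ref{t3.3} by recognising the hexagonal limit diagram as a presentation of the triple fibre product. First I would unpack the diagram in Theorem \ref{t3.3}: three outer copies of $M\otimes\mathcal{W}_{D^{2}}$ together with three mediating copies of $M\otimes\mathcal{W}_{D(2)}$, each mediating copy being the common target of a distinct pair of outer vertices along the map $\mathrm{id}_{M}\otimes\mathcal{W}_{i_{D(2)}^{D^{2}}}$. A cone over this diagram is therefore exactly a triple $(x_{1},x_{2},x_{3})$ of generalised elements of $M\otimes\mathcal{W}_{D^{2}}$ that agree pairwise after projection to $M\otimes\mathcal{W}_{D(2)}$.

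Second, I would observe that this is precisely the universal property of the iterated fibre product $(M\otimes\mathcal{W}_{D^{2}})\times_{M\otimes\mathcal{W}_{D(2)}}(M\otimes\mathcal{W}_{D^{2}})\times_{M\otimes\mathcal{W}_{D(2)}}(M\otimes\mathcal{W}_{D^{2}})$. Whether one reads the triple pullback as the limit over a single base $M\otimes\mathcal{W}_{D(2)}$ or as two iterated binary pullbacks, its cones are exactly triples that are pairwise compatible over $M\otimes\mathcal{W}_{D(2)}$; separating the three pairs into three distinct mediating copies of $M\otimes\mathcal{W}_{D(2)}$ (as the hexagon does) does not change the limit, because all three structure maps are the identical morphism $\mathrm{id}_{M}\otimes\mathcal{W}_{i_{D(2)}^{D^{2}}}$. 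Since Theorem \ref{t3.3} identifies the apex of the hexagonal limit with $M\otimes\mathcal{W}_{E}$, uniqueness of limits gives the corollary.

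The only genuine bookkeeping to be done is to verify that the three maps $\mathrm{id}_{M}\otimes\mathcal{W}_{l_{i}}$ from $M\otimes\mathcal{W}_{E}$ to the three outer vertices do indeed form a cone over the hexagon. This is immediate from the explicit formulas for $l_{1},l_{2},l_{3}$: for each pair $i\neq j$ the compositions $l_{i}\circ i_{D(2)}^{D^{2}}$ and $l_{j}\circ i_{D(2)}^{D^{2}}$ agree on $D(2)$, since the products $d_{1}d_{2}$ vanish there and the four coordinates of $E$ collapse correctly. There is no real obstacle beyond this routine match-up; the substantive content has already been packaged into Theorem \ref{t3.3}.
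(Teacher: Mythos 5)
Your proposal is correct and matches the paper's (implicit) argument: the corollary is stated as an immediate consequence of Theorem \ref{t3.3}, the only content being exactly what you spell out, namely that the hexagonal limit diagram with all structure maps equal to $\mathrm{id}_{M}\otimes\mathcal{W}_{i_{D(2)}^{D^{2}}}$ has the same cones as the triple fibre product over $M\otimes\mathcal{W}_{D(2)}$, so uniqueness of limits identifies the latter with $M\otimes\mathcal{W}_{E}$. Your verification that the $\mathrm{id}_{M}\otimes\mathcal{W}_{l_{i}}$ form a cone (the $l_{i}$ agreeing on $D(2)$ because $d_{1}d_{2}=0$ there) is the right bookkeeping and is already packaged into the statement of Theorem \ref{t3.3}.
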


This theorem follows directly from the following lemma.

\begin{lemma}
\label{t3.4}The following diagram is a limit diagram of Weil algebras:
\[
\
\begin{array}
[c]{ccccccc}
& \mathcal{W}_{i_{D(2)}^{D^{2}}} &  & \mathcal{W}_{D^{2}} &  & \mathcal{W}%
_{i_{D(2)}^{D^{2}}} & \\
&  & \swarrow & \uparrow & \searrow &  & \\
& \mathcal{W}_{D(2)} &  & \mathcal{W}_{E} &  & \mathcal{W}_{D(2)} & \\
\mathcal{W}_{i_{D(2)}^{D^{2}}} & \uparrow & \swarrow &  & \searrow & \uparrow
& \mathcal{W}_{i_{D(2)}^{D^{2}}}\\
& \mathcal{W}_{D^{2}} &  &  &  & \mathcal{W}_{D^{2}} & \\
&  & \searrow &  & \swarrow &  & \\
& \mathcal{W}_{i_{D(2)}^{D^{2}}} &  & \mathcal{W}_{D(2)} &  & \mathcal{W}%
_{i_{D(2)}^{D^{2}}} &
\end{array}
\]

\end{lemma}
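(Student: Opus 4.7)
The plan is to reduce the lemma to a direct finite-dimensional computation over $\mathbb{R}$. First I would record explicit presentations: $\mathcal{W}_{D^{2}}=\mathbb{R}[\eta_{1},\eta_{2}]/(\eta_{1}^{2},\eta_{2}^{2})$ has basis $\{1,\eta_{1},\eta_{2},\eta_{1}\eta_{2}\}$ while $\mathcal{W}_{D(2)}=\mathcal{W}_{D^{2}}/(\eta_{1}\eta_{2})$ has basis $\{1,\eta_{1},\eta_{2}\}$, so the arrows labelled $\mathcal{W}_{i_{D(2)}^{D^{2}}}$ are surjections with one-dimensional kernel $\mathbb{R}\cdot\eta_{1}\eta_{2}$. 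After killing the squares $\epsilon_{i}^{2}$ together with the five products $\epsilon_{1}\epsilon_{3},\epsilon_{2}\epsilon_{3},\epsilon_{1}\epsilon_{4},\epsilon_{2}\epsilon_{4},\epsilon_{3}\epsilon_{4}$, the only monomials surviving in $\mathcal{W}_{E}$ are $1,\epsilon_{1},\epsilon_{2},\epsilon_{3},\epsilon_{4},\epsilon_{1}\epsilon_{2}$, so $\dim_{\mathbb{R}}\mathcal{W}_{E}=6$.

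Next I would analyse the base diagram obtained by removing $\mathcal{W}_{E}$. Each of the three copies of $\mathcal{W}_{D^{2}}$ maps by $\mathcal{W}_{i_{D(2)}^{D^{2}}}$ to two of the three copies of $\mathcal{W}_{D(2)}$, and every pair of $\mathcal{W}_{D^{2}}$'s shares exactly one intermediate $\mathcal{W}_{D(2)}$, so an element of the limit is simply a triple $(a_{1},a_{2},a_{3})\in\mathcal{W}_{D^{2}}^{3}$ whose projections to $\mathcal{W}_{D(2)}$ agree pairwise; equivalently, it lies in the iterated fibered product $\mathcal{W}_{D^{2}}\times_{\mathcal{W}_{D(2)}}\mathcal{W}_{D^{2}}\times_{\mathcal{W}_{D(2)}}\mathcal{W}_{D^{2}}$. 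Parametrising each $a_{i}$ by its common image in $\mathcal{W}_{D(2)}$ (three free scalars) together with the three independent coefficients of $\eta_{1}\eta_{2}$ in the three factors, this limit is also $6$-dimensional.

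Finally I would verify that the comparison map $(\mathcal{W}_{l_{1}},\mathcal{W}_{l_{2}},\mathcal{W}_{l_{3}}):\mathcal{W}_{E}\to\mathcal{W}_{D^{2}}\times_{\mathcal{W}_{D(2)}}\mathcal{W}_{D^{2}}\times_{\mathcal{W}_{D(2)}}\mathcal{W}_{D^{2}}$ is an isomorphism by direct tracking of the six basis elements: the formulas for $l_{1},l_{2},l_{3}$ yield $\mathcal{W}_{l_{i}}(\epsilon_{1})=\eta_{1}$ and $\mathcal{W}_{l_{i}}(\epsilon_{2})=\eta_{2}$ for every $i$, while $\epsilon_{3}\mapsto(0,\eta_{1}\eta_{2},0)$, $\epsilon_{4}\mapsto(0,0,\eta_{1}\eta_{2})$ and $\epsilon_{1}\epsilon_{2}\mapsto(\eta_{1}\eta_{2},\eta_{1}\eta_{2},\eta_{1}\eta_{2})$, six evidently linearly independent vectors exhausting the $6$-dimensional limit. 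The main obstacle is purely bookkeeping: one must verify that the given diagram actually commutes, which boils down to the identities $l_{j}\circ i_{D(2)}^{D^{2}}=l_{k}\circ i_{D(2)}^{D^{2}}$ as maps $D(2)\to E$, i.e.\ that the products $d_{1}d_{2}$ appearing in $l_{2}$ and $l_{3}$ vanish once restricted to $D(2)$, after which the rest is an elementary dimension and basis comparison.
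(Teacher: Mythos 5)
Your proposal is correct and takes essentially the same route as the paper: both identify $\mathcal{W}_{E}$ with the six-dimensional algebra spanned by $1,X_{1},X_{2},X_{1}X_{2},X_{3},X_{4}$ and check directly that $(\mathcal{W}_{l_{1}},\mathcal{W}_{l_{2}},\mathcal{W}_{l_{3}})$ identifies it with the triple fibered product $\mathcal{W}_{D^{2}}\times_{\mathcal{W}_{D(2)}}\mathcal{W}_{D^{2}}\times_{\mathcal{W}_{D(2)}}\mathcal{W}_{D^{2}}$. The only cosmetic difference is that the paper exhibits the unique $\gamma$ with $\mathcal{W}_{l_{i}}(\gamma)=\gamma_{i}$ explicitly in terms of the coefficients $a_{12},b_{12},c_{12}$, whereas you argue by tracking basis elements and comparing dimensions; both computations are the same at bottom.
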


\begin{proof}
Let $\gamma_{1},\gamma_{2},\gamma_{3}\in\mathcal{W}_{D^{2}}$ and $\gamma
\in\mathcal{W}_{E}$ so that they are the polynomials with coefficients in $k
$\ of the following forms:
\begin{align*}
\gamma_{1}(X_{1},X_{2}) &  =a+a_{1}X_{1}+a_{2}X_{2}+a_{12}X_{1}X_{2}\\
\gamma_{2}(X_{1},X_{2}) &  =b+b_{1}X_{1}+b_{2}X_{2}+b_{12}X_{1}X_{2}\\
\gamma_{3}(X_{1},X_{2}) &  =c+c_{1}X_{1}+c_{2}X_{2}+c_{12}X_{1}X_{2}\\
\gamma(X_{1},X_{2},X_{3},X_{4}) &  =e+e_{1}X_{1}+e_{2}X_{2}+e_{12}X_{1}%
X_{2}+e_{3}X_{3}+e_{4}X_{4}%
\end{align*}
The condition that $\mathcal{W}_{i_{D(2)}^{D^{2}}}(\gamma_{1})=\mathcal{W}%
_{i_{D(2)}^{D^{2}}}(\gamma_{2})=\mathcal{W}_{i_{D(2)}^{D^{2}}}(\gamma_{3})$ is
equivalent to the following three conditions as a whole:
\begin{align*}
a &  =b=c\\
a_{1} &  =b_{1}=c_{1}\\
a_{2} &  =b_{2}=c_{2}%
\end{align*}
Therefore, in order that $\mathcal{W}_{l_{1}}(\gamma)=\gamma_{1}$,
$\mathcal{W}_{l_{2}}(\gamma)=\gamma_{2}$ and $\mathcal{W}_{l_{3}}%
(\gamma)=\gamma_{3}$ in this case, it is necessary and sufficient that the
polynomial $\gamma$ should be of the following form:
\[
\gamma(X_{1},X_{2},X_{3},X_{4})=a+a_{1}X_{1}+a_{2}X_{2}+a_{12}X_{1}%
X_{2}+(b_{12}-a_{12})X_{3}+(c_{12}-a_{12})X_{4}%
\]
This completes the proof.
\end{proof}

\begin{theorem}
\label{t3.5}The diagram
\[%
\begin{array}
[c]{ccccccc}
& \mathrm{id}_{M}\otimes\mathcal{W}_{\varphi} &  & M\otimes\mathcal{W}_{C} &
& \mathrm{id}_{M}\otimes\mathcal{W}_{\psi} & \\
&  & \swarrow & \uparrow & \searrow &  & \\
& M\otimes\mathcal{W}_{D^{2}} &  & M\otimes\mathcal{W}_{E} &  & M\otimes
\mathcal{W}_{D^{2}} & \\
\mathrm{id}_{M}\otimes\mathcal{W}_{\psi} & \uparrow & \swarrow &  & \searrow &
\uparrow & \mathrm{id}_{M}\otimes\mathcal{W}_{\varphi}\\
& M\otimes\mathcal{W}_{C} &  &  &  & M\otimes\mathcal{W}_{C} & \\
&  & \searrow &  & \swarrow &  & \\
& \mathrm{id}_{M}\otimes\mathcal{W}_{\varphi} &  & M\otimes\mathcal{W}_{D^{2}}
&  & \mathrm{id}_{M}\otimes\mathcal{W}_{\psi} &
\end{array}
\]
is a limit diagram, where $C$\ stands for
\[
D^{3}\{(1,3),(2,3)\}
\]
and the three unnamed morphisms go contraclockwise from the top as follows:
\begin{align*}
& \mathrm{id}_{M}\otimes\mathcal{W}_{(d_{1},d_{2},d_{3})\in D^{3}%
\{(1,3),(2,3)\}\mapsto(d_{1},d_{2},d_{3},0)\in E}:M\otimes\mathcal{W}%
_{E}\rightarrow M\otimes\mathcal{W}_{C}\\
& \mathrm{id}_{M}\otimes\mathcal{W}_{(d_{1},d_{2},d_{3})\in D^{3}%
\{(1,3),(2,3)\}\mapsto(d_{1},d_{2},d_{1}d_{2}-d_{3},d_{3})\in E}%
:M\otimes\mathcal{W}_{E}\rightarrow M\otimes\mathcal{W}_{C}\\
& \mathrm{id}_{M}\otimes\mathcal{W}_{(d_{1},d_{2},d_{3})\in D^{3}%
\{(1,3),(2,3)\}\mapsto(d_{1},d_{2},0,d_{1}d_{2}-d_{3})\in E}:M\otimes
\mathcal{W}_{E}\rightarrow M\otimes\mathcal{W}_{C}%
\end{align*}

\end{theorem}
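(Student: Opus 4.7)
The plan is to deduce Theorem \ref{t3.5} formally from the two preceding pullback results: the Corollary of Proposition \ref{t3.1}, which identifies $\mathcal{W}_{C}$ with $\mathcal{W}_{D^{2}}\times_{\mathcal{W}_{D(2)}}\mathcal{W}_{D^{2}}$ via $(\mathcal{W}_{\varphi},\mathcal{W}_{\psi})$, and the Corollary of Theorem \ref{t3.3}, which expresses $M\otimes\mathcal{W}_{E}$ as the triple pullback of three copies of $M\otimes\mathcal{W}_{D^{2}}$ over $M\otimes\mathcal{W}_{D(2)}$ with cone maps $\mathrm{id}_{M}\otimes\mathcal{W}_{l_{i}}$. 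With this approach, the only genuinely new calculations are a handful of compositions of maps between infinitesimal objects.

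First I would invoke microlinearity of $M$, exactly as in the reduction of Theorem \ref{t3.3} to Lemma \ref{t3.4}, to replace the claim by the assertion that the corresponding hexagonal diagram of Weil algebras is a limit diagram. Next, using the Corollary of Proposition \ref{t3.1}, I would present each $\mathcal{W}_{C}$-vertex $\gamma_{i}$ as a compatible pair $(\alpha_{i},\beta_{i})=(\mathcal{W}_{\varphi}\gamma_{i},\mathcal{W}_{\psi}\gamma_{i})$ in $\mathcal{W}_{D^{2}}\times_{\mathcal{W}_{D(2)}}\mathcal{W}_{D^{2}}$. Reading off the three $\mathcal{W}_{D^{2}}$-vertex equations of the hexagon identifies one component of each $(\alpha_{i},\beta_{i})$ with a component of the next, collapsing the six elements into a triple $(p,q,r)\in\mathcal{W}_{D^{2}}^{3}$; the three within-vertex conditions $\mathcal{W}_{i_{D(2)}^{D^{2}}}(\alpha_{i})=\mathcal{W}_{i_{D(2)}^{D^{2}}}(\beta_{i})$ then say precisely that $p$, $q$, $r$ pairwise agree in $\mathcal{W}_{D(2)}$, so the triple lies in the triple pullback described by the Corollary of Theorem \ref{t3.3}, which is $\mathcal{W}_{E}$.

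To finish, I would verify that under these identifications the three maps $\mathrm{id}_{M}\otimes\mathcal{W}_{f_{i}}$ of the present theorem agree with the three projections $\mathrm{id}_{M}\otimes\mathcal{W}_{l_{j}}$ of Theorem \ref{t3.3}. This reduces to six direct substitutions: for the maps $f_{i}:C\to E$ whose images under $\mathcal{W}$ are the unnamed arrows, one verifies $f_{1}\circ\varphi=l_{1}$, $f_{1}\circ\psi=l_{2}$, $f_{2}\circ\varphi=l_{2}$, $f_{2}\circ\psi=l_{3}$, $f_{3}\circ\varphi=l_{3}$, $f_{3}\circ\psi=l_{1}$ straight from the defining formulas of $\varphi,\psi,l_{1},l_{2},l_{3}$ and the three $f_{i}$.

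The principal obstacle is purely combinatorial: one must pin down the cyclic assignment of $f_{1}$, $f_{2}$, $f_{3}$ to the three $\mathcal{W}_{C}$-vertices, as governed by ``contraclockwise from the top'', so that the three $\mathcal{W}_{D^{2}}$-vertex compatibility conditions of the hexagon collapse cleanly into the triple-pullback condition demanded by Theorem \ref{t3.3}, rather than into mutually inconsistent identities. Once this bookkeeping is fixed, everything else is a formal cone-rearrangement, and no polynomial computation beyond Lemma \ref{t3.4} is needed.
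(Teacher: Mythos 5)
Your proposal is correct, but it takes a genuinely different route from the paper. The paper reduces Theorem \ref{t3.5} to Lemma \ref{t3.6} by microlinearity and then disposes of Lemma \ref{t3.6} with the laconic remark ``by the same token as in Lemma \ref{t3.4}'', i.e.\ by a second explicit polynomial computation: write out general elements of $\mathcal{W}_{C}$ and $\mathcal{W}_{E}$ and compare coefficients. You instead obtain the Weil-algebra limit \emph{formally}: decompose each $\mathcal{W}_{C}$-vertex into its defining pullback $\mathcal{W}_{D^{2}}\times_{\mathcal{W}_{D(2)}}\mathcal{W}_{D^{2}}$ (Proposition \ref{t3.1}), use the three shared $\mathcal{W}_{D^{2}}$-vertices to collapse the six components to a triple pairwise agreeing in $\mathcal{W}_{D(2)}$, and recognize that triple pullback as $\mathcal{W}_{E}$ via Lemma \ref{t3.4}; the only computation left is the six composites, and these do check out exactly as you state ($f_{1}\circ\varphi=l_{1}$, $f_{1}\circ\psi=l_{2}$, $f_{2}\circ\varphi=l_{2}$, $f_{2}\circ\psi=l_{3}$, $f_{3}\circ\varphi=l_{3}$, $f_{3}\circ\psi=l_{1}$). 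The bookkeeping worry you flag is real but resolvable: the cyclic pattern $(l_{1},l_{2})$, $(l_{2},l_{3})$, $(l_{3},l_{1})$ admits exactly one assignment of $f_{1},f_{2},f_{3}$ to the three $\mathcal{W}_{C}$-vertices making all three shared-vertex conditions hold simultaneously (with $f_{1}$ at the top, the vertex adjacent to it via $\varphi$ must carry the $f_{i}$ whose $\psi$-composite is $l_{1}$, namely $f_{3}$, and so on), and under that assignment the induced map $\mathcal{W}_{E}\rightarrow\lim$ is precisely the canonical isomorphism of Lemma \ref{t3.4}. What your approach buys is the elimination of a second coefficient-by-coefficient verification, replaced by a standard interchange-of-limits argument; what the paper's approach buys is independence from any limit-decomposition formalities. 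Both are sound.
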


This theorem follows directly from the following lemma.

\begin{lemma}
\label{t3.6}The diagram
\[%
\begin{array}
[c]{ccccccc}
& \mathcal{W}_{\psi} &  & \mathcal{W}_{C} &  & \mathcal{W}_{\varphi} & \\
&  & \swarrow & \uparrow & \searrow &  & \\
& \mathcal{W}_{D^{2}} &  & \mathcal{W}_{E} &  & \mathcal{W}_{D^{2}} & \\
\mathcal{W}_{\varphi} & \uparrow & \swarrow &  & \searrow & \uparrow &
\mathcal{W}_{\psi}\\
& \mathcal{W}_{C} &  &  &  & \mathcal{W}_{C} & \\
&  & \searrow &  & \swarrow &  & \\
& \mathcal{W}_{\psi} &  & \mathcal{W}_{D^{2}} &  & \mathcal{W}_{\varphi} &
\end{array}
\]
is a limit diagram, where the three unnamed morphisms go contraclockwise from
the top as follows:
\begin{align*}
& \mathcal{W}_{(d_{1},d_{2},d_{3})\in D^{3}\{(1,3),(2,3)\}\mapsto(d_{1}%
,d_{2},d_{3},0)\in E}:\mathcal{W}_{E}\rightarrow\mathcal{W}_{C}\\
& \mathcal{W}_{(d_{1},d_{2},d_{3})\in D^{3}\{(1,3),(2,3)\}\mapsto(d_{1}%
,d_{2},d_{1}d_{2}-d_{3},d_{3})\in E}:\mathcal{W}_{E}\rightarrow\mathcal{W}%
_{C}\\
& \mathcal{W}_{(d_{1},d_{2},d_{3})\in D^{3}\{(1,3),(2,3)\}\mapsto(d_{1}%
,d_{2},0,d_{1}d_{2}-d_{3})\in E}:\mathcal{W}_{E}\rightarrow\mathcal{W}_{C}%
\end{align*}

\end{lemma}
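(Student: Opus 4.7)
The plan is to mirror the explicit coefficient-chasing argument of Lemma \ref{t3.4}: fix monomial bases on the Weil algebras involved, translate the hexagon's commuting squares into linear conditions on the coefficients of a cone, and then directly solve for the unique $\gamma \in \mathcal{W}_E$ mapping to the given triple $(\gamma_1, \gamma_2, \gamma_3)$.

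The first step is to write down explicit bases. An element of $\mathcal{W}_C$ has the form
\[
\gamma_i = a_i + a_{i,1} X_1 + a_{i,2} X_2 + a_{i,3} X_3 + a_{i,12} X_1 X_2 \qquad (i=1,2,3),
\]
since the relations $X_j^2 = 0$ and $X_1 X_3 = X_2 X_3 = 0$ kill all other monomials, while an element of $\mathcal{W}_E$ has the form
\[
\gamma = e + e_1 X_1 + e_2 X_2 + e_{12} X_1 X_2 + e_3 X_3 + e_4 X_4.
\]
The second step is to translate the three commuting-square conditions around the hexagon. Since $\mathcal{W}_\varphi$ sends $X_3$ to $0$ and $\mathcal{W}_\psi$ sends $X_3$ to $X_1 X_2$, these conditions force $a_i, a_{i,1}, a_{i,2}$ to be independent of $i$ and impose three $X_1X_2$-relations among the $a_{i,12}$ and $a_{i,3}$; summing these three relations shows that they automatically imply the identity $a_{1,3} + a_{2,3} + a_{3,3} = 0$, and any two of them then determine the third.

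The third step is to apply the three Weil algebra maps $\mathcal{W}_{m_i}\colon \mathcal{W}_E \to \mathcal{W}_C$ induced by the maps $m_i\colon C \to E$ in the statement, using the substitutions $X_3 \mapsto X_3,\ X_4 \mapsto 0$ for $m_1$; $X_3 \mapsto X_1 X_2 - X_3,\ X_4 \mapsto X_3$ for $m_2$; and $X_3 \mapsto 0,\ X_4 \mapsto X_1 X_2 - X_3$ for $m_3$. Setting $\mathcal{W}_{m_i}(\gamma) = \gamma_i$ yields fifteen scalar equations which, given the cone compatibilities from the previous step, are simultaneously solvable and force the unique assignment
\[
e = a_1,\quad e_1 = a_{1,1},\quad e_2 = a_{1,2},\quad e_{12} = a_{1,12},\quad e_3 = a_{1,3},\quad e_4 = -a_{3,3}.
\]
This is precisely the limit property, with existence of $\gamma$ supplied by the assignment and uniqueness by the fact that every coefficient of $\gamma$ is pinned down by at least one of the three matching equations.

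The main obstacle is purely clerical: tracking how the overdetermined-looking system of fifteen matching equations collapses to these six assignments exactly when the hexagon compatibilities hold, with no spurious constraint surviving. The cleanest way to organize the bookkeeping is to observe in advance that the three $X_1X_2$-equations produced by the commuting squares form a rank-two linear system whose single nontrivial kernel relation is the identity $a_{1,3}+a_{2,3}+a_{3,3}=0$; once that rank-two structure is recognized, existence and uniqueness of $\gamma$ drop out together and the limit claim is complete.
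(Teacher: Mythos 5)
Your proposal is correct and is essentially the paper's own argument: the paper proves Lemma \ref{t3.6} simply "by the same token as in Lemma \ref{t3.4}", i.e.\ by exactly the explicit coefficient computation you carry out (bases $\{1,X_1,X_2,X_3,X_1X_2\}$ for $\mathcal{W}_C$ and $\{1,X_1,X_2,X_1X_2,X_3,X_4\}$ for $\mathcal{W}_E$, the cone conditions $a_{2,12}=a_{1,12}+a_{1,3}$, $a_{3,12}=a_{1,12}-a_{3,3}$, $a_{3,12}=a_{2,12}+a_{2,3}$ with their consequence $a_{1,3}+a_{2,3}+a_{3,3}=0$, and the unique solution $e_3=a_{1,3}$, $e_4=-a_{3,3}$, etc.). You have merely written out the details the paper leaves implicit, and your verification checks out.
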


\begin{proof}
By the same token as in Lemma \ref{t3.4}.
\end{proof}

\begin{proof}
(of the primordial Jacobi identity). The morphism
\[
\zeta^{\ast_{2}\overset{\cdot}{-}\ast_{1}}:\underset{1}{\left(  M\otimes
\mathcal{W}_{D^{2}}\right)  }\times_{M\otimes\mathcal{W}_{D(2)}}\underset
{2}{\left(  M\otimes\mathcal{W}_{D^{2}}\right)  }\times_{M\otimes
\mathcal{W}_{D(2)}}\underset{3}{\left(  M\otimes\mathcal{W}_{D^{2}}\right)
}\rightarrow M\otimes\mathcal{W}_{D}%
\]
is the composition of
\begin{align*}
& \mathrm{id}_{M}\otimes\mathcal{W}_{(d_{1},d_{2},d_{3})\in D^{3}%
\{(1,3),(2,3)\}\mapsto(d_{1},d_{2},d_{3},0)\in E}\\
& :\underset{1}{\left(  M\otimes\mathcal{W}_{D^{2}}\right)  }\times
_{M\otimes\mathcal{W}_{D(2)}}\underset{2}{\left(  M\otimes\mathcal{W}_{D^{2}%
}\right)  }\times_{M\otimes\mathcal{W}_{D(2)}}\underset{3}{\left(
M\otimes\mathcal{W}_{D^{2}}\right)  }=M\otimes\mathcal{W}_{E}\rightarrow
M\otimes\mathcal{W}_{C}\\
& =\underset{1}{\left(  M\otimes\mathcal{W}_{D^{2}}\right)  }\times
_{M\otimes\mathcal{W}_{D(2)}}\underset{2}{\left(  M\otimes\mathcal{W}_{D^{2}%
}\right)  }%
\end{align*}
and
\[
\underset{1}{\zeta^{\overset{\cdot}{-}}:\left(  M\otimes\mathcal{W}_{D^{2}%
}\right)  }\times_{M\otimes\mathcal{W}_{D(2)}}\underset{2}{\left(
M\otimes\mathcal{W}_{D^{2}}\right)  }\rightarrow M\otimes\mathcal{W}_{D}%
\]
in succession, which is in turn equivalent to
\[
\mathrm{id}_{M}\otimes\mathcal{W}_{d\in D\mapsto\left(  0,0,d,0\right)  \in E}%
\]
The morphism
\[
\zeta^{\ast_{3}\overset{\cdot}{-}\ast_{2}}:\underset{1}{\left(  M\otimes
\mathcal{W}_{D^{2}}\right)  }\times_{M\otimes\mathcal{W}_{D(2)}}\underset
{2}{\left(  M\otimes\mathcal{W}_{D^{2}}\right)  }\times_{M\otimes
\mathcal{W}_{D(2)}}\underset{3}{\left(  M\otimes\mathcal{W}_{D^{2}}\right)
}\rightarrow M\otimes\mathcal{W}_{D}%
\]
is the composition of
\begin{align*}
& \mathrm{id}_{M}\otimes\mathcal{W}_{(d_{1},d_{2},d_{3})\in D^{3}%
\{(1,3),(2,3)\}\mapsto(d_{1},d_{2},d_{1}d_{2}-d_{3},d_{3})\in E}\\
& :\underset{1}{\left(  M\otimes\mathcal{W}_{D^{2}}\right)  }\times
_{M\otimes\mathcal{W}_{D(2)}}\underset{2}{\left(  M\otimes\mathcal{W}_{D^{2}%
}\right)  }\times_{M\otimes\mathcal{W}_{D(2)}}\underset{3}{\left(
M\otimes\mathcal{W}_{D^{2}}\right)  }=M\otimes\mathcal{W}_{E}\rightarrow
M\otimes\mathcal{W}_{C}\\
& =\underset{2}{\left(  M\otimes\mathcal{W}_{D^{2}}\right)  }\times
_{M\otimes\mathcal{W}_{D(2)}}\underset{3}{\left(  M\otimes\mathcal{W}_{D^{2}%
}\right)  }%
\end{align*}
and
\[
\underset{2}{\zeta^{\overset{\cdot}{-}}:\left(  M\otimes\mathcal{W}_{D^{2}%
}\right)  }\times_{M\otimes\mathcal{W}_{D(2)}}\underset{3}{\left(
M\otimes\mathcal{W}_{D^{2}}\right)  }\rightarrow M\otimes\mathcal{W}_{D}%
\]
in succession, which is in turn equivalent to
\[
\mathrm{id}_{M}\otimes\mathcal{W}_{d\in D\mapsto\left(  0,0,-d,d\right)  \in
E}%
\]
The morphism
\[
\zeta^{\ast_{1}\overset{\cdot}{-}\ast_{3}}:\underset{1}{\left(  M\otimes
\mathcal{W}_{D^{2}}\right)  }\times_{M\otimes\mathcal{W}_{D(2)}}\underset
{2}{\left(  M\otimes\mathcal{W}_{D^{2}}\right)  }\times_{M\otimes
\mathcal{W}_{D(2)}}\underset{3}{\left(  M\otimes\mathcal{W}_{D^{2}}\right)
}\rightarrow M\otimes\mathcal{W}_{D}%
\]
is the composition of
\begin{align*}
& \mathrm{id}_{M}\otimes\mathcal{W}_{(d_{1},d_{2},d_{3})\in D^{3}%
\{(1,3),(2,3)\}\mapsto(d_{1},d_{2},0,d_{1}d_{2}-d_{3})\in E}\\
& :\underset{1}{\left(  M\otimes\mathcal{W}_{D^{2}}\right)  }\times
_{M\otimes\mathcal{W}_{D(2)}}\underset{2}{\left(  M\otimes\mathcal{W}_{D^{2}%
}\right)  }\times_{M\otimes\mathcal{W}_{D(2)}}\underset{3}{\left(
M\otimes\mathcal{W}_{D^{2}}\right)  }=M\otimes\mathcal{W}_{E}\rightarrow
M\otimes\mathcal{W}_{C}\\
& =\underset{3}{\left(  M\otimes\mathcal{W}_{D^{2}}\right)  }\times
_{M\otimes\mathcal{W}_{D(2)}}\underset{1}{\left(  M\otimes\mathcal{W}_{D^{2}%
}\right)  }%
\end{align*}
and
\[
\underset{3}{\left(  M\otimes\mathcal{W}_{D^{2}}\right)  }\times
_{M\otimes\mathcal{W}_{D(2)}}\underset{1}{\left(  M\otimes\mathcal{W}_{D^{2}%
}\right)  }\rightarrow M\otimes\mathcal{W}_{D}%
\]
in succession, which is in turn equivalent to
\[
\mathrm{id}_{M}\otimes\mathcal{W}_{d\in D\mapsto\left(  0,0,0,-d\right)  \in
E}%
\]
Therefore
\begin{align*}
& \zeta^{\ast_{2}\overset{\cdot}{-}\ast_{1}}+\zeta^{\ast_{3}\overset{\cdot}%
{-}\ast_{2}}+\zeta^{\ast_{1}\overset{\cdot}{-}\ast_{3}}\\
& :\underset{1}{\left(  M\otimes\mathcal{W}_{D^{2}}\right)  }\times
_{M\otimes\mathcal{W}_{D(2)}}\underset{2}{\left(  M\otimes\mathcal{W}_{D^{2}%
}\right)  }\times_{M\otimes\mathcal{W}_{D(2)}}\underset{3}{\left(
M\otimes\mathcal{W}_{D^{2}}\right)  }\rightarrow M\otimes\mathcal{W}_{D}%
\end{align*}
is equivalent to
\begin{align*}
& \left(  \mathrm{id}_{M}\otimes\mathcal{W}_{d\in D\mapsto\left(
d,d,d\right)  \in D\left(  3\right)  }\right)  \circ\left(  \mathrm{id}%
_{M}\otimes\mathcal{W}_{\left(  d_{1},d_{2},d_{3}\right)  \in D\left(
3\right)  \mapsto\left(  0,0,d_{1}-d_{2},d_{2}-d_{3}\right)  \in E}\right) \\
& =\mathrm{id}_{M}\otimes\left(  \mathcal{W}_{d\in D\mapsto\left(
d,d,d\right)  \in D\left(  3\right)  }\circ\mathcal{W}_{\left(  d_{1}%
,d_{2},d_{3}\right)  \in D\left(  3\right)  \mapsto\left(  0,0,d_{1}%
-d_{2},d_{2}-d_{3}\right)  \in E}\right) \\
& =\mathrm{id}_{M}\otimes\mathcal{W}_{d\in D\mapsto\left(  0,0,0,0\right)  \in
E}%
\end{align*}
This completes the proof.
\end{proof}

\section{\label{s3.2}The Main Identity}

\begin{proposition}
\label{t3.7}The diagram
\[%
\begin{array}
[c]{ccccc}
&  & \mathrm{id}_{M}\otimes\mathcal{W}_{\varphi_{1}^{3}} &  & \\
& M\otimes\mathcal{W}_{D^{4}\{(2,4),(3,4)\}} & \rightarrow & M\otimes
\mathcal{W}_{D^{3}} & \\
\mathrm{id}_{M}\otimes\mathcal{W}_{\psi_{1}^{3}} & \downarrow &  & \downarrow
& \mathrm{id}_{M}\otimes\mathcal{W}_{i_{D^{3}\{(2,3)\}}^{D^{3}}}\\
& M\otimes\mathcal{W}_{D^{3}} & \rightarrow & M\otimes\mathcal{W}%
_{D^{3}\{(2,3)\}} & \\
&  & \mathrm{id}_{M}\otimes\mathcal{W}_{i_{D^{3}\{(2,3)\}}^{D^{3}}} &  &
\end{array}
\]
is a pullback diagram, where the assumptive mapping $\varphi_{1}^{3}%
:D^{3}\rightarrow D^{4}\{(2,4),(3,4)\}$ is
\[
(d_{1},d_{2},d_{3})\in D^{3}\mapsto(d_{1},d_{2},d_{3},0)\in D^{4}%
\{(2,4),(3,4)\}
\]
while the assumptive mapping $\psi_{1}^{3}:D^{3}\rightarrow D^{4}%
\{(2,4),(3,4)\}$ is
\[
(d_{1},d_{2},d_{3})\in D^{3}\mapsto(d_{1},d_{2},d_{3},d_{2}d_{3})\in
D^{4}\{(2,4),(3,4)\}
\]

\end{proposition}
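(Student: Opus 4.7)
The plan is to mimic the proof of Proposition \ref{t3.1} one level up. By the microlinearity of $M$, it suffices to verify that the underlying square of Weil algebras
\[
\begin{array}{ccc}
\mathcal{W}_{D^{4}\{(2,4),(3,4)\}} & \overset{\mathcal{W}_{\varphi_{1}^{3}}}{\longrightarrow} & \mathcal{W}_{D^{3}}\\
{\scriptstyle\mathcal{W}_{\psi_{1}^{3}}}\downarrow &  & \downarrow{\scriptstyle\mathcal{W}_{i_{D^{3}\{(2,3)\}}^{D^{3}}}}\\
\mathcal{W}_{D^{3}} & \underset{\mathcal{W}_{i_{D^{3}\{(2,3)\}}^{D^{3}}}}{\longrightarrow} & \mathcal{W}_{D^{3}\{(2,3)\}}
\end{array}
\]
is a pullback, and then transport the conclusion back along $M\otimes(-)$ exactly as in the proof of Proposition \ref{t3.1}.

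For the Weil-algebra pullback I would argue in the style of Lemma \ref{t3.4}. A basis of $\mathcal{W}_{D^{4}\{(2,4),(3,4)\}}$ is given by the monomials in $X_{1},X_{2},X_{3},X_{4}$ not divisible by any of $X_{i}^{2}$, $X_{2}X_{4}$, or $X_{3}X_{4}$, namely $1,\,X_{1},\,X_{2},\,X_{3},\,X_{4},\,X_{1}X_{2},\,X_{1}X_{3},\,X_{1}X_{4},\,X_{2}X_{3},\,X_{1}X_{2}X_{3}$. The map $\mathcal{W}_{i_{D^{3}\{(2,3)\}}^{D^{3}}}$ annihilates precisely the coefficients of $X_{2}X_{3}$ and $X_{1}X_{2}X_{3}$, so two elements $\alpha,\beta\in\mathcal{W}_{D^{3}}$ have the same image in $\mathcal{W}_{D^{3}\{(2,3)\}}$ if and only if they agree on every monomial except possibly $X_{2}X_{3}$ and $X_{1}X_{2}X_{3}$.

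Given such $\alpha,\beta$, a preimage $\gamma\in\mathcal{W}_{D^{4}\{(2,4),(3,4)\}}$ is uniquely determined: its coefficients at $1,X_{1},X_{2},X_{3},X_{1}X_{2},X_{1}X_{3},X_{2}X_{3},X_{1}X_{2}X_{3}$ are forced to equal those of $\alpha$ (since $\mathcal{W}_{\varphi_{1}^{3}}$ simply kills $X_{4}$), whereas the coefficient of $X_{4}$ absorbs the discrepancy $\beta_{X_{2}X_{3}}-\alpha_{X_{2}X_{3}}$ and the coefficient of $X_{1}X_{4}$ absorbs $\beta_{X_{1}X_{2}X_{3}}-\alpha_{X_{1}X_{2}X_{3}}$ (since $\mathcal{W}_{\psi_{1}^{3}}$ sends $X_{4}\mapsto X_{2}X_{3}$). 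Both existence and uniqueness then fall out by inspection.

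I do not foresee any real obstacle. The delicate point, such as it is, is the bookkeeping of the basis of $\mathcal{W}_{D^{4}\{(2,4),(3,4)\}}$: the relations $X_{2}X_{4}=X_{3}X_{4}=0$ must leave exactly two ``new'' monomials involving $X_{4}$, namely $X_{4}$ and $X_{1}X_{4}$, matching precisely the two slots in which $\alpha$ and $\beta$ are permitted to disagree. This matching is what makes the pullback tight rather than degenerate.
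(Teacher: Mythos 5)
Your proposal is correct and follows the paper's own route: reduce by microlinearity of $M$ to the corresponding square of Weil algebras and check that it is a pullback. The paper merely asserts the Weil-algebra pullback, whereas you supply the explicit basis verification (in the style of Lemma \ref{t3.4}), and your bookkeeping — the ten surviving monomials of $\mathcal{W}_{D^{4}\{(2,4),(3,4)\}}$ and the two slots $X_{4}$, $X_{1}X_{4}$ absorbing the discrepancies at $X_{2}X_{3}$ and $X_{1}X_{2}X_{3}$ — is accurate.
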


\begin{proof}
This follows from the microlinearity of $M$ and the pullback diagram of Weil
algebras
\[%
\begin{array}
[c]{ccccc}
&  & \mathcal{W}_{\varphi_{1}^{3}} &  & \\
& \mathcal{W}_{D^{4}\{(2,4),(3,4)\}} & \rightarrow & \mathcal{W}_{D^{3}} & \\
\mathcal{W}_{\psi_{1}^{3}} & \downarrow &  & \downarrow & \mathcal{W}%
_{i_{D^{3}\{(2,3)\}}^{D^{3}}}\\
& \mathcal{W}_{D^{3}} & \rightarrow & \mathcal{W}_{D^{3}\{(2,3)\}} & \\
&  & \mathcal{W}_{i_{D^{3}\{(2,3)\}}^{D^{3}}} &  &
\end{array}
\]

\end{proof}

\begin{corollary}
We have
\begin{align*}
& \left(  M\otimes\mathcal{W}_{D^{3}}\right)  \times_{M\otimes\mathcal{W}%
_{D^{3}\{(2,3)\}}}\left(  M\otimes\mathcal{W}_{D^{3}}\right) \\
& =M\otimes\mathcal{W}_{D^{4}\{(2,4),(3,4)\}}%
\end{align*}
with the diagrams
\[%
\begin{array}
[c]{ccc}%
\underset{1}{\left(  M\otimes\mathcal{W}_{D^{3}}\right)  }\times
_{M\otimes\mathcal{W}_{D^{3}\{(2,3)\}}}\underset{2}{\left(  M\otimes
\mathcal{W}_{D^{3}}\right)  } & \rightarrow & \underset{1}{M\otimes
\mathcal{W}_{D^{3}}}\\
\parallel &  & \parallel\\
M\otimes\mathcal{W}_{D^{4}\{(2,4),(3,4)\}} & \rightarrow & \underset
{1}{M\otimes\mathcal{W}_{D^{3}}}\\
& \mathrm{id}_{M}\otimes\mathcal{W}_{\varphi_{1}^{3}} &
\end{array}
\]
and
\[%
\begin{array}
[c]{ccc}%
\underset{1}{\left(  M\otimes\mathcal{W}_{D^{3}}\right)  }\times
_{M\otimes\mathcal{W}_{D^{3}\{(2,3)\}}}\underset{2}{\left(  M\otimes
\mathcal{W}_{D^{3}}\right)  } & \rightarrow & \underset{2}{M\otimes
\mathcal{W}_{D^{3}}}\\
\parallel &  & \parallel\\
M\otimes\mathcal{W}_{D^{4}\{(2,4),(3,4)\}} & \rightarrow & \underset
{2}{M\otimes\mathcal{W}_{D^{3}}}\\
& \mathrm{id}_{M}\otimes\mathcal{W}_{\psi_{1}^{3}} &
\end{array}
\]
being commutative, where the unnamed arrows are canonical projections.
\end{corollary}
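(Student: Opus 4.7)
The plan is to derive the corollary directly from Proposition \ref{t3.7} by invoking the universal property of a pullback. Since that proposition has just established that the displayed square is a pullback, its apex $M\otimes\mathcal{W}_{D^{4}\{(2,4),(3,4)\}}$ is canonically isomorphic to the fibered product of the two copies of $M\otimes\mathcal{W}_{D^{3}}$ over $M\otimes\mathcal{W}_{D^{3}\{(2,3)\}}$, where both copies map into the base by $\mathrm{id}_{M}\otimes\mathcal{W}_{i_{D^{3}\{(2,3)\}}^{D^{3}}}$. This is precisely the equality that must be exhibited, so no separate argument is required beyond invoking the definition of a pullback square.

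Next, I would read off the two commutative squares in the statement as the two components of the canonical projection out of the pullback. The top edge of the square in Proposition \ref{t3.7}, namely $\mathrm{id}_{M}\otimes\mathcal{W}_{\varphi_{1}^{3}}$, becomes the projection onto the factor labeled $1$, giving the first diagram; symmetrically, the left edge $\mathrm{id}_{M}\otimes\mathcal{W}_{\psi_{1}^{3}}$ becomes the projection onto the factor labeled $2$, giving the second diagram. The explicit formulas $(d_{1},d_{2},d_{3})\mapsto(d_{1},d_{2},d_{3},0)$ and $(d_{1},d_{2},d_{3})\mapsto(d_{1},d_{2},d_{3},d_{2}d_{3})$ make the assignment of labels unambiguous, since one recovers each projection by post-composing the apex-identification with the corresponding $\mathcal{W}_{\varphi_{1}^{3}}$ or $\mathcal{W}_{\psi_{1}^{3}}$.

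I do not anticipate any genuine obstacle. Once Proposition \ref{t3.7} is in hand, this corollary is a purely bookkeeping translation from the language of pullback squares into the fibered-product notation employed in later constructions; no further microlinearity argument or Weil-algebra manipulation is required. The only mildly delicate point is making sure the two factors are consistently labeled in the way the next section will use them, but this is settled once and for all by matching $\varphi_{1}^{3}$ with factor $1$ and $\psi_{1}^{3}$ with factor $2$ as above.
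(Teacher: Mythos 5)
Your proposal is correct and matches the paper's (implicit) reasoning exactly: the paper offers no separate proof for this corollary, treating it as an immediate restatement of the pullback property established in Proposition \ref{t3.7}, with $\mathrm{id}_{M}\otimes\mathcal{W}_{\varphi_{1}^{3}}$ and $\mathrm{id}_{M}\otimes\mathcal{W}_{\psi_{1}^{3}}$ serving as the two canonical projections. Nothing further is needed.
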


\begin{notation}
We will write
\[
\zeta^{\underset{1}{\overset{\cdot}{-}}}:\left(  M\otimes\mathcal{W}_{D^{3}%
}\right)  \times_{M\otimes\mathcal{W}_{D^{3}\{(2,3)\}}}\left(  M\otimes
\mathcal{W}_{D^{3}}\right)  \rightarrow M\otimes\mathcal{W}_{D^{2}}%
\]
for the morphism
\begin{align*}
& \mathrm{id}_{M}\otimes\mathcal{W}_{(d_{1},d_{2})\in D^{2}\mapsto
(d_{1},0,0,d_{2})\in D^{4}\{(2,4),(3,4)\}}\\
& :\left(  M\otimes\mathcal{W}_{D^{3}}\right)  \times_{M\otimes\mathcal{W}%
_{D^{3}\{(2,3)\}}}\left(  M\otimes\mathcal{W}_{D^{3}}\right) \\
& =M\otimes\mathcal{W}_{D^{4}\{(2,4),(3,4)\}}\\
& \rightarrow M\otimes\mathcal{W}_{D^{2}}%
\end{align*}

\end{notation}

\begin{proposition}
\label{t3.8}The diagram
\[%
\begin{array}
[c]{ccc}%
M\otimes\mathcal{W}_{D^{4}\{(1,4),(3,4)\}} & \underrightarrow{\mathrm{id}%
_{M}\otimes\mathcal{W}_{\varphi_{2}^{3}}} & M\otimes\mathcal{W}_{D^{3}}\\
\mathrm{id}_{M}\otimes\mathcal{W}_{\psi_{2}^{3}}\downarrow &  & \downarrow
\mathrm{id}_{M}\otimes\mathcal{W}_{i_{D^{3}\{(1,3)\}}^{D^{3}}}\\
M\otimes\mathcal{W}_{D^{3}} & \overrightarrow{\mathrm{id}_{M}\otimes
\mathcal{W}_{i_{D^{3}\{(1,3)\}}^{D^{3}}}} & M\otimes\mathcal{W}_{D^{3}%
\{(1,3)\}}%
\end{array}
\]
is a pullback diagram, where the assumptive mapping $\varphi_{2}^{3}%
:D^{3}\rightarrow D^{4}\{(1,4),(3,4)\}$ is
\[
(d_{1},d_{2},d_{3})\in D^{3}\mapsto(d_{1},d_{2},d_{3},0)\in D^{4}%
\{(1,4),(3,4)\}
\]
while the assumptive mapping $\psi_{2}^{3}:D^{3}\rightarrow D^{4}%
\{(1,4),(3,4)\}$ is
\[
(d_{1},d_{2},d_{3})\in D^{3}\mapsto(d_{1},d_{2},d_{3},d_{1}d_{3})\in
D^{4}\{(1,4),(3,4)\}
\]

\end{proposition}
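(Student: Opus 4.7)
The plan is to imitate the proof of Proposition \ref{t3.7} verbatim. The microlinearity (and Weil-exponentiability) of $M$ means that the functor $M\otimes\mathcal{W}_{(-)}$ carries pullbacks of Weil algebras induced from the relevant diagram of infinitesimal objects to pullbacks in $\mathcal{K}$. Hence it suffices to exhibit the square
\[
\begin{array}{ccc}
\mathcal{W}_{D^{4}\{(1,4),(3,4)\}} & \underrightarrow{\mathcal{W}_{\varphi_{2}^{3}}} & \mathcal{W}_{D^{3}}\\
\mathcal{W}_{\psi_{2}^{3}}\downarrow &  & \downarrow\mathcal{W}_{i_{D^{3}\{(1,3)\}}^{D^{3}}}\\
\mathcal{W}_{D^{3}} & \overrightarrow{\mathcal{W}_{i_{D^{3}\{(1,3)\}}^{D^{3}}}} & \mathcal{W}_{D^{3}\{(1,3)\}}
\end{array}
\]
as a pullback of Weil algebras, which I will verify by a direct polynomial computation modelled on Lemma \ref{t3.4}.

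First I would fix convenient linear bases. A typical $\gamma_i\in\mathcal{W}_{D^{3}}$ is written as
\[
\gamma_i=a+a_{1}X_{1}+a_{2}X_{2}+a_{3}X_{3}+a_{12}X_{1}X_{2}+a_{13}X_{1}X_{3}+a_{23}X_{2}X_{3}+a_{123}X_{1}X_{2}X_{3},
\]
and $\mathcal{W}_{D^{3}\{(1,3)\}}$ is the further quotient by $X_{1}X_{3}=0$, which also kills $X_{1}X_{2}X_{3}$. Consequently $\mathcal{W}_{i_{D^{3}\{(1,3)\}}^{D^{3}}}$ is the projection that erases exactly the $X_{1}X_{3}$ and $X_{1}X_{2}X_{3}$ coefficients, so a pair $(\gamma_{1},\gamma_{2})$ satisfies the compatibility condition precisely when $\gamma_{2}-\gamma_{1}$ is an arbitrary $\mathbb{R}$-linear combination of $X_{1}X_{3}$ and $X_{1}X_{2}X_{3}$.

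Next I would observe that the relations $X_{1}X_{4}=X_{3}X_{4}=0$ cut down $\mathcal{W}_{D^{4}\{(1,4),(3,4)\}}$ to a ten-dimensional algebra, whose monomial basis consists of the eight monomials above together with exactly $X_{4}$ and $X_{2}X_{4}$. Writing a generic $\gamma$ in the upper-left corner and adding two further terms $a_{4}X_{4}+a_{24}X_{2}X_{4}$, one computes $\mathcal{W}_{\varphi_{2}^{3}}(\gamma)$ by setting $X_{4}=0$ and $\mathcal{W}_{\psi_{2}^{3}}(\gamma)$ by substituting $X_{4}=X_{1}X_{3}$; the substitution produces precisely the shifts $a_{13}\leadsto a_{13}+a_{4}$ and $a_{123}\leadsto a_{123}+a_{24}$, with all other coefficients untouched. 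Consequently the system $\mathcal{W}_{\varphi_{2}^{3}}(\gamma)=\gamma_{1}$, $\mathcal{W}_{\psi_{2}^{3}}(\gamma)=\gamma_{2}$ admits the unique solution whose eight $X_{1..3}$-coefficients are dictated by $\gamma_{1}$ and whose two remaining coefficients are $a_{4}=a_{13}''-a_{13}'$ and $a_{24}=a_{123}''-a_{123}'$, giving the required universal property.

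There is no serious obstacle: the only bookkeeping point to watch is that the ideal $(X_{1}X_{4},X_{3}X_{4})$ annihilates every degree-three or degree-four monomial containing $X_{4}$ except $X_{2}X_{4}$, so the two surviving $X_{4}$-monomials match exactly the two-dimensional freedom in the compatibility condition and no parasitic coefficients appear. This is the natural asymmetric analogue of the symmetric pattern seen in Proposition \ref{t3.7}, the asymmetry being merely that the pair $(1,3)$ skips over the index $2$, which accounts for the appearance of $X_{2}X_{4}$ in place of the degree-one generator one might have expected.
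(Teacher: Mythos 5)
Your proposal is correct and follows exactly the paper's route: reduce via the microlinearity of $M$ to the corresponding pullback square of Weil algebras, which the paper merely displays and you verify by the same kind of coefficient computation as in Lemma \ref{t3.4}. Your bookkeeping of the ten-dimensional basis of $\mathcal{W}_{D^{4}\{(1,4),(3,4)\}}$ and of the shifts $a_{13}\leadsto a_{13}+a_{4}$, $a_{123}\leadsto a_{123}+a_{24}$ under the substitution $X_{4}=X_{1}X_{3}$ is accurate.
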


\begin{proof}
This follows from the microlinearity of $M$ and the pullback diagram of Weil
algebras
\[%
\begin{array}
[c]{ccccc}
&  & \mathcal{W}_{\varphi_{2}^{3}} &  & \\
& \mathcal{W}_{D^{4}\{(1,4),(3,4)\}} & \rightarrow & \mathcal{W}_{D^{3}} & \\
\mathcal{W}_{\psi_{2}^{3}} & \downarrow &  & \downarrow & \mathcal{W}%
_{i_{D^{3}\{(1,3)\}}^{D^{3}}}\\
& \mathcal{W}_{D^{3}} & \rightarrow & \mathcal{W}_{D^{3}\{(1,3)\}} & \\
&  & \mathcal{W}_{i_{D^{3}\{(1,3)\}}^{D^{3}}} &  &
\end{array}
\]

\end{proof}

\begin{corollary}
We have
\begin{align*}
& \left(  M\otimes\mathcal{W}_{D^{3}}\right)  \times_{M\otimes\mathcal{W}%
_{D^{3}\{(1,3)\}}}\left(  M\otimes\mathcal{W}_{D^{3}}\right) \\
& =M\otimes\mathcal{W}_{D^{4}\{(1,4),(3,4)\}}%
\end{align*}
with the diagrams
\[%
\begin{array}
[c]{ccc}%
\underset{1}{\left(  M\otimes\mathcal{W}_{D^{3}}\right)  }\times
_{M\otimes\mathcal{W}_{D^{3}\{(1,3)\}}}\underset{2}{\left(  M\otimes
\mathcal{W}_{D^{3}}\right)  } & \rightarrow & \underset{1}{M\otimes
\mathcal{W}_{D^{3}}}\\
\parallel &  & \parallel\\
M\otimes\mathcal{W}_{D^{4}\{(1,4),(3,4)\}} & \rightarrow & \underset
{1}{M\otimes\mathcal{W}_{D^{3}}}\\
& \mathrm{id}_{M}\otimes\mathcal{W}_{\varphi_{2}^{3}} &
\end{array}
\]
and
\[%
\begin{array}
[c]{ccc}%
\underset{1}{\left(  M\otimes\mathcal{W}_{D^{3}}\right)  }\times
_{M\otimes\mathcal{W}_{D^{3}\{(1,3)\}}}\underset{2}{\left(  M\otimes
\mathcal{W}_{D^{3}}\right)  } & \rightarrow & \underset{2}{M\otimes
\mathcal{W}_{D^{3}}}\\
\parallel &  & \parallel\\
M\otimes\mathcal{W}_{D^{4}\{(1,4),(3,4)\}} & \rightarrow & \underset
{2}{M\otimes\mathcal{W}_{D^{3}}}\\
& \mathrm{id}_{M}\otimes\mathcal{W}_{\psi_{2}^{3}} &
\end{array}
\]
being commutative, where unnamed arrows are canonical projections.
\end{corollary}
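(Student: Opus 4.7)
The plan is to follow the same two-step strategy already used for Proposition~\ref{t3.7}: first reduce the claim to a pullback statement about Weil algebras by appealing to the microlinearity and Weil-exponentiability of $M$, and then establish that Weil-algebra pullback by an explicit polynomial computation in the spirit of Lemma~\ref{t3.4}. The first step is automatic: because $M$ is microlinear and Weil-exponentiable, the functor $X\mapsto M\otimes\mathcal{W}_{X}$ carries the relevant colimit diagrams of infinitesimal objects (equivalently, limit diagrams of the associated Weil algebras) to limit diagrams, so it suffices to verify that the dualised square of Weil algebras displayed in the proof is a pullback of commutative algebras.

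For the Weil-algebra step, I would expand generic $\gamma_{1},\gamma_{2}\in\mathcal{W}_{D^{3}}$ in the basis of squarefree monomials in $X_{1},X_{2},X_{3}$. The map $\mathcal{W}_{i_{D^{3}\{(1,3)\}}^{D^{3}}}$ kills exactly the coefficients of $X_{1}X_{3}$ and $X_{1}X_{2}X_{3}$, so the matching condition $\mathcal{W}_{i_{D^{3}\{(1,3)\}}^{D^{3}}}(\gamma_{1})=\mathcal{W}_{i_{D^{3}\{(1,3)\}}^{D^{3}}}(\gamma_{2})$ is equivalent to agreement of all other coefficients of $\gamma_{1}$ and $\gamma_{2}$, with the $X_{1}X_{3}$- and $X_{1}X_{2}X_{3}$-coefficients being unconstrained. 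I would then expand a generic $\gamma\in\mathcal{W}_{D^{4}\{(1,4),(3,4)\}}$ in the pruned monomial basis: the relations $X_{1}X_{4}=X_{3}X_{4}=0$ kill every monomial in which $X_{4}$ meets either $X_{1}$ or $X_{3}$, so the only surviving $X_{4}$-monomials are $X_{4}$ and $X_{2}X_{4}$.

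Now $\mathcal{W}_{\varphi_{2}^{3}}$ sends $X_{4}\mapsto 0$, so the condition $\mathcal{W}_{\varphi_{2}^{3}}(\gamma)=\gamma_{1}$ fixes every coefficient of $\gamma$ except those of $X_{4}$ and $X_{2}X_{4}$. In contrast $\mathcal{W}_{\psi_{2}^{3}}$ sends $X_{4}\mapsto X_{1}X_{3}$, so under $\mathcal{W}_{\psi_{2}^{3}}$ the coefficient of $X_{4}$ adds to the $X_{1}X_{3}$-coefficient of the image and the coefficient of $X_{2}X_{4}$ adds to the $X_{1}X_{2}X_{3}$-coefficient. Hence the condition $\mathcal{W}_{\psi_{2}^{3}}(\gamma)=\gamma_{2}$ determines the two remaining coefficients uniquely, as precisely the prescribed differences on $X_{1}X_{3}$ and $X_{1}X_{2}X_{3}$ between $\gamma_{2}$ and $\gamma_{1}$. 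This gives both existence and uniqueness of $\gamma$, so the square is a pullback.

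I do not anticipate a serious obstacle here; the whole argument is a direct translation of Lemma~\ref{t3.4}, and the pairing of the two free monomials $(X_{4},X_{2}X_{4})$ in $\mathcal{W}_{D^{4}\{(1,4),(3,4)\}}$ with the two free differences $(X_{1}X_{3},X_{1}X_{2}X_{3})$ in $\mathcal{W}_{D^{3}}$ is forced by the substitution $X_{4}\mapsto X_{1}X_{3}$ together with the matched index pattern $(1,3)$ appearing in the defining relations on both sides of the diagram. The only place where care is required is the correct enumeration of the surviving monomial basis of $\mathcal{W}_{D^{4}\{(1,4),(3,4)\}}$ after imposing $X_{1}X_{4}=X_{3}X_{4}=0$, so that one does not accidentally count extra degrees of freedom.
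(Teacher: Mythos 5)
Your proposal is correct and follows essentially the same route as the paper: the corollary is an immediate restatement of Proposition \ref{t3.8}, which the paper justifies by microlinearity of $M$ together with the corresponding pullback of Weil algebras, exactly as you do. Your explicit polynomial verification (the surviving $X_{4}$-monomials being $X_{4}$ and $X_{2}X_{4}$, matched to the free $X_{1}X_{3}$- and $X_{1}X_{2}X_{3}$-coefficients via $X_{4}\mapsto X_{1}X_{3}$) correctly fills in the Weil-algebra computation that the paper leaves implicit, in the same style as its Lemma \ref{t3.4}.
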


\begin{notation}
We will write
\[
\zeta^{\underset{2}{\overset{\cdot}{-}}}:\left(  M\otimes\mathcal{W}_{D^{3}%
}\right)  \times_{M\otimes\mathcal{W}_{D^{3}\{(1,3)\}}}\left(  M\otimes
\mathcal{W}_{D^{3}}\right)  \rightarrow M\otimes\mathcal{W}_{D^{2}}%
\]
for the morphism
\begin{align*}
& \mathrm{id}_{M}\otimes\mathcal{W}_{(d_{1},d_{2})\in D^{2}\mapsto
(0,d_{1},0,d_{2})\in D^{4}\{(1,4),(3,4)\}}\\
& :\left(  M\otimes\mathcal{W}_{D^{3}}\right)  \times_{M\otimes\mathcal{W}%
_{D^{3}\{(1,3)\}}}\left(  M\otimes\mathcal{W}_{D^{3}}\right) \\
& =M\otimes\mathcal{W}_{D^{4}\{(1,4),(3,4)\}}\\
& \rightarrow M\otimes\mathcal{W}_{D^{2}}%
\end{align*}

\end{notation}

\begin{proposition}
\label{t3.9}The diagram
\[%
\begin{array}
[c]{ccccc}
&  & \mathrm{id}_{M}\otimes\mathcal{W}_{\varphi_{3}^{3}} &  & \\
& M\otimes\mathcal{W}_{D^{4}\{(1,4),(2,4)\}} & \rightarrow & M\otimes
\mathcal{W}_{D^{3}} & \\
\mathrm{id}_{M}\otimes\mathcal{W}_{\psi_{3}^{3}} & \downarrow &  & \downarrow
& \mathrm{id}_{M}\otimes\mathcal{W}_{i_{D^{3}\{(1,2)\}}^{D^{3}}}\\
& M\otimes\mathcal{W}_{D^{3}} & \rightarrow & M\otimes\mathcal{W}%
_{D^{3}\{(1,2)\}} & \\
&  & \mathrm{id}_{M}\otimes\mathcal{W}_{i_{D^{3}\{(1,2)\}}^{D^{3}}} &  &
\end{array}
\]
is a pullback diagram, where the assumptive mapping $\varphi_{3}^{3}%
:D^{3}\rightarrow D^{4}\{(1,4),(2,4)\}$ is
\[
(d_{1},d_{2},d_{3})\in D^{3}\mapsto(d_{1},d_{2},d_{3},0)\in D^{4}%
\{(1,4),(2,4)\}
\]
while the assumptive mapping $\psi_{3}^{3}:D^{3}\rightarrow D^{4}%
\{(1,4),(2,4)\}$ is
\[
(d_{1},d_{2},d_{3})\in D^{3}\mapsto(d_{1},d_{2},d_{3},d_{1}d_{2})\in
D^{4}\{(1,4),(2,4)\}
\]

\end{proposition}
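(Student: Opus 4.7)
The statement is obtained from Propositions \ref{t3.7} and \ref{t3.8} by a permutation of the three non-trivial indices: the distinguished pair is now $(1,2)$ instead of $(2,3)$ or $(1,3)$, and the extra coordinate is glued on via $d_1 d_2$ in $D^4\{(1,4),(2,4)\}$. The plan is therefore to imitate those proofs verbatim: appeal to the microlinearity of $M$ to reduce the claim to the corresponding pullback square of Weil algebras over $k$,
\[
\begin{array}{ccccc}
 & & \mathcal{W}_{\varphi_3^3} & & \\
 & \mathcal{W}_{D^4\{(1,4),(2,4)\}} & \rightarrow & \mathcal{W}_{D^3} & \\
\mathcal{W}_{\psi_3^3} & \downarrow & & \downarrow & \mathcal{W}_{i_{D^3\{(1,2)\}}^{D^3}}\\
 & \mathcal{W}_{D^3} & \rightarrow & \mathcal{W}_{D^3\{(1,2)\}} & \\
 & & \mathcal{W}_{i_{D^3\{(1,2)\}}^{D^3}} & &
\end{array}
\]
and then verify that square directly, in the same polynomial style as Lemma \ref{t3.4}.

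For the verification I would write two generic elements $\gamma_1,\gamma_2 \in \mathcal{W}_{D^3}$ as
\[
\gamma_i(X_1,X_2,X_3) = a^{(i)} + \sum_{j=1}^{3} a^{(i)}_j X_j + \sum_{1 \leq j < k \leq 3} a^{(i)}_{jk} X_j X_k + a^{(i)}_{123} X_1 X_2 X_3
\]
and observe that the condition $\mathcal{W}_{i_{D^3\{(1,2)\}}^{D^3}}(\gamma_1) = \mathcal{W}_{i_{D^3\{(1,2)\}}^{D^3}}(\gamma_2)$, i.e.\ equality after killing $X_1 X_2$ and hence also $X_1 X_2 X_3$, forces all coefficients of $\gamma_1$ and $\gamma_2$ to coincide except those of $X_1 X_2$ and of $X_1 X_2 X_3$. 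A basis of $\mathcal{W}_{D^4\{(1,4),(2,4)\}}$ is $\{1, X_1, X_2, X_3, X_4, X_1 X_2, X_1 X_3, X_2 X_3, X_3 X_4, X_1 X_2 X_3\}$. I would then show that the unique polynomial $\gamma$ in that Weil algebra with $\mathcal{W}_{\varphi_3^3}(\gamma) = \gamma_1$ and $\mathcal{W}_{\psi_3^3}(\gamma) = \gamma_2$ assigns the common coefficients of $\gamma_1$ and $\gamma_2$ to the monomials $1, X_1, X_2, X_3, X_1 X_2, X_1 X_3, X_2 X_3, X_1 X_2 X_3$, and sets the coefficients of $X_4$ and of $X_3 X_4$ to be $a^{(2)}_{12} - a^{(1)}_{12}$ and $a^{(2)}_{123} - a^{(1)}_{123}$ respectively.

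The only step requiring care is keeping track of which monomials survive in $\mathcal{W}_{D^4\{(1,4),(2,4)\}}$: on top of $X_i^2 = 0$ from the $D^4$-relations, the relations $X_1 X_4 = X_2 X_4 = 0$ automatically annihilate every higher product involving $X_4$ together with either $X_1$ or $X_2$, leaving $X_3 X_4$ as the only new quadratic monomial and no new cubic or quartic monomials. Once that bookkeeping is straight, the pullback verification is purely clerical and proceeds exactly as in the proof of Lemma \ref{t3.4}; I do not anticipate any substantive obstacle beyond this indexing.
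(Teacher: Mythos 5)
Your proposal is correct and follows the paper's own route exactly: reduce by microlinearity of $M$ to the corresponding pullback square of Weil algebras, which the paper simply asserts and you verify by the polynomial bookkeeping of Lemma \ref{t3.4} (your basis for $\mathcal{W}_{D^{4}\{(1,4),(2,4)\}}$ and the coefficients $a^{(2)}_{12}-a^{(1)}_{12}$ for $X_{4}$ and $a^{(2)}_{123}-a^{(1)}_{123}$ for $X_{3}X_{4}$ are right). One tiny wording slip: the coefficients of $X_{1}X_{2}$ and $X_{1}X_{2}X_{3}$ in $\gamma$ are not ``common'' to $\gamma_{1}$ and $\gamma_{2}$ but are taken from $\gamma_{1}$, exactly as $a_{12}$ is in Lemma \ref{t3.4}; your difference formulas show you already know this.
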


\begin{proof}
This follows from the microlinearity of $M$ and the pullback diagram of Weil
algebras
\[%
\begin{array}
[c]{ccccc}
&  & \mathcal{W}_{\varphi_{3}^{3}} &  & \\
& \mathcal{W}_{D^{4}\{(1,4),(2,4)\}} & \rightarrow & \mathcal{W}_{D^{3}} & \\
\mathcal{W}_{\psi_{3}^{3}} & \downarrow &  & \downarrow & \mathcal{W}%
_{i_{D^{3}\{(1,2)\}}^{D^{3}}}\\
& \mathcal{W}_{D^{3}} & \rightarrow & \mathcal{W}_{D^{3}\{(1,2)\}} & \\
&  & \mathcal{W}_{i_{D^{3}\{(1,2)\}}^{D^{3}}} &  &
\end{array}
\]

\end{proof}

\begin{corollary}
We have
\begin{align*}
& \left(  M\otimes\mathcal{W}_{D^{3}}\right)  \times_{M\otimes\mathcal{W}%
_{D^{3}\{(1,2)\}}}\left(  M\otimes\mathcal{W}_{D^{3}}\right) \\
& =M\otimes\mathcal{W}_{D^{4}\{(1,4),(2,4)\}}%
\end{align*}
with the diagrams
\[%
\begin{array}
[c]{ccc}%
\underset{1}{\left(  M\otimes\mathcal{W}_{D^{3}}\right)  }\times
_{M\otimes\mathcal{W}_{D^{3}\{(1,2)\}}}\underset{2}{\left(  M\otimes
\mathcal{W}_{D^{3}}\right)  } & \rightarrow & \underset{1}{M\otimes
\mathcal{W}_{D^{3}}}\\
\parallel &  & \parallel\\
M\otimes\mathcal{W}_{D^{4}\{(1,4),(2,4)\}} & \rightarrow & \underset
{1}{M\otimes\mathcal{W}_{D^{3}}}\\
& \mathrm{id}_{M}\otimes\mathcal{W}_{\varphi_{3}^{3}} &
\end{array}
\]
and
\[%
\begin{array}
[c]{ccc}%
\underset{1}{\left(  M\otimes\mathcal{W}_{D^{3}}\right)  }\times
_{M\otimes\mathcal{W}_{D^{3}\{(1,2)\}}}\underset{2}{\left(  M\otimes
\mathcal{W}_{D^{3}}\right)  } & \rightarrow & \underset{2}{M\otimes
\mathcal{W}_{D^{3}}}\\
\parallel &  & \parallel\\
M\otimes\mathcal{W}_{D^{4}\{(1,4),(2,4)\}} & \rightarrow & \underset
{2}{M\otimes\mathcal{W}_{D^{3}}}\\
& \mathrm{id}_{M}\otimes\mathcal{W}_{\psi_{3}^{3}} &
\end{array}
\]
being commutative, where unnamed arrows are canonical projections.
\end{corollary}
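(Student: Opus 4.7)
The plan is to mimic verbatim the argument used in Propositions \ref{t3.7} and \ref{t3.8}. By microlinearity of $M$, the functor $M\otimes\mathcal{W}_{(-)}$ converts colimits of infinitesimal objects into limits, equivalently turns pullback squares of Weil algebras into pullback squares in $\mathcal{K}$. So it suffices to show that the diagram of Weil algebras
\[
\begin{array}{ccc}
\mathcal{W}_{D^{4}\{(1,4),(2,4)\}} & \underrightarrow{\mathcal{W}_{\varphi_{3}^{3}}} & \mathcal{W}_{D^{3}}\\
\mathcal{W}_{\psi_{3}^{3}}\downarrow &  & \downarrow \mathcal{W}_{i_{D^{3}\{(1,2)\}}^{D^{3}}}\\
\mathcal{W}_{D^{3}} & \overrightarrow{\mathcal{W}_{i_{D^{3}\{(1,2)\}}^{D^{3}}}} & \mathcal{W}_{D^{3}\{(1,2)\}}
\end{array}
\]
is a pullback. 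This is the only genuine content of the proposition.

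To verify the pullback, I would parametrise a generic element of each Weil algebra by its polynomial representative. A general $\eta\in\mathcal{W}_{D^{4}\{(1,4),(2,4)\}}$ has the form
\[
\eta=e+e_{1}X_{1}+e_{2}X_{2}+e_{3}X_{3}+e_{4}X_{4}+e_{12}X_{1}X_{2}+e_{13}X_{1}X_{3}+e_{23}X_{2}X_{3}+e_{34}X_{3}X_{4}+e_{123}X_{1}X_{2}X_{3},
\]
while a general $\gamma\in\mathcal{W}_{D^{3}}$ is the usual $8$-term polynomial, and the quotient $\mathcal{W}_{D^{3}}\to\mathcal{W}_{D^{3}\{(1,2)\}}$ simply kills the $X_{1}X_{2}$ and $X_{1}X_{2}X_{3}$ coefficients. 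Given a compatible pair $(\gamma_{1},\gamma_{2})\in\mathcal{W}_{D^{3}}\times_{\mathcal{W}_{D^{3}\{(1,2)\}}}\mathcal{W}_{D^{3}}$, one equates $\mathcal{W}_{\varphi_{3}^{3}}(\eta)=\gamma_{1}$ (obtained by setting $X_{4}=0$) and $\mathcal{W}_{\psi_{3}^{3}}(\eta)=\gamma_{2}$ (obtained by substituting $X_{4}=X_{1}X_{2}$, which sends $X_{3}X_{4}$ to $X_{1}X_{2}X_{3}$). A short linear-algebra match fixes $e,e_{1},e_{2},e_{3},e_{12},e_{13},e_{23},e_{123}$ from $\gamma_{1}$, and then $e_{4}$ and $e_{34}$ are forced by the remaining two coefficients of $\gamma_{2}$ to be the differences in the $X_{1}X_{2}$ and $X_{1}X_{2}X_{3}$ coefficients of $\gamma_{2}$ and $\gamma_{1}$; this establishes existence and uniqueness.

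There is really no obstacle: the one thing to watch is that the extra relations $X_{1}X_{4}=X_{2}X_{4}=0$ in $\mathcal{W}_{D^{4}\{(1,4),(2,4)\}}$ are consistent with both $\varphi_{3}^{3}$ and $\psi_{3}^{3}$; under $\psi_{3}^{3}$ they become $X_{1}^{2}X_{2}=0$ and $X_{1}X_{2}^{2}=0$, which already hold in $\mathcal{W}_{D^{3}}$. Hence both $\mathcal{W}_{\varphi_{3}^{3}}$ and $\mathcal{W}_{\psi_{3}^{3}}$ are well-defined, and the pullback property holds as described. Transporting back through $M\otimes\mathcal{W}_{(-)}$ via microlinearity of $M$ yields the pullback in $\mathcal{K}$ claimed in the proposition.
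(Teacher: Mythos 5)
Your proposal is correct and follows essentially the same route as the paper: the corollary is just the fiber-product restatement of Proposition \ref{t3.9}, whose proof in the paper likewise reduces to the pullback square of Weil algebras via microlinearity of $M$. The only difference is that you explicitly verify that Weil-algebra pullback by the polynomial computation (correctly identifying the ten surviving monomials of $\mathcal{W}_{D^{4}\{(1,4),(2,4)\}}$ and solving for $e_{4}$ and $e_{34}$), whereas the paper merely asserts it; your computation matches the style of the paper's own proof of Lemma \ref{t3.4}.
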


\begin{notation}
We will write
\[
\zeta^{\underset{3}{\overset{\cdot}{-}}}:\left(  M\otimes\mathcal{W}_{D^{3}%
}\right)  \times_{M\otimes\mathcal{W}_{D^{3}\{(1,2)\}}}\left(  M\otimes
\mathcal{W}_{D^{3}}\right)  \rightarrow M\otimes\mathcal{W}_{D^{2}}%
\]
for the morphism
\begin{align*}
& \mathrm{id}_{M}\otimes\mathcal{W}_{(d_{1},d_{2})\in D^{2}\mapsto
(0,0,d_{1},d_{2})\in D^{4}\{(1,4),(3,4)\}}\\
& :\left(  M\otimes\mathcal{W}_{D^{3}}\right)  \times_{M\otimes\mathcal{W}%
_{D^{3}\{(1,2)\}}}\left(  M\otimes\mathcal{W}_{D^{3}}\right) \\
& =M\otimes\mathcal{W}_{D^{4}\{(1,4),(2,4)\}}\\
& \rightarrow M\otimes\mathcal{W}_{D^{2}}%
\end{align*}

\end{notation}

\begin{notation}
We will write $i_{14}^{1}$, \ $i_{24}^{2}$ and $i_{34}^{3}$\ for the
assumptive mappings
\[
\left(  d_{1},d_{2}\right)  \in D(2)\mapsto\left(  d_{1},0,0,d_{2}\right)  \in
D^{4}\{(2,4),(3,4)\}\text{,}%
\]
\[
\left(  d_{1},d_{2}\right)  \in D(2)\mapsto\left(  0,d_{1},0,d_{2}\right)  \in
D^{4}\{(1,4),(3,4)\}
\]
and
\[
\left(  d_{1},d_{2}\right)  \in D(2)\mapsto\left(  0,0,d_{1},d_{2}\right)  \in
D^{4}\{(1,4),(2,4)\}
\]
respectively.
\end{notation}

\begin{proposition}
\label{t3.11}The diagram
\[%
\begin{array}
[c]{ccccc}
&  & \mathrm{id}_{M}\otimes\mathcal{W}_{\eta_{1}^{1}} &  & \\
& M\otimes\mathcal{W}_{E[1]} & \rightarrow & M\otimes\mathcal{W}%
_{D^{4}\{(2,4),(3,4)\}} & \\
\mathrm{id}_{M}\otimes\mathcal{W}_{\eta_{2}^{1}} & \downarrow &  & \downarrow
& \mathrm{id}_{M}\otimes\mathcal{W}_{i_{14}^{1}}\\
& M\otimes\mathcal{W}_{D^{4}\{(2,4),(3,4)\}} & \rightarrow & M\otimes
\mathcal{W}_{D(2)} & \\
&  & \mathrm{id}_{M}\otimes\mathcal{W}_{i_{14}^{1}} &  &
\end{array}
\]
is a pullback, where the assumptive object $E[1]$ is
\begin{align*}
&  D^{7}\{(2,6),(3,6),(4,6),(5,6),(1,7),(2,7),(3,7),(4,7),(5,7),(6,7),(2,4),\\
&  (2,5),(3,4),(3,5)\}\text{,}%
\end{align*}
the assumptive mapping
\[
\eta_{1}^{1}:D^{4}\{(2,4),(3,4)\}\rightarrow E[1]
\]
is
\begin{align*}
(d_{1},d_{2},d_{3},d_{4}) &  \in D^{4}\{(2,4),(3,4)\}\mapsto\\
(d_{1},d_{2},d_{3},0,0,d_{4},0) &  \in E[1]\text{, }%
\end{align*}
and the assumptive mapping
\[
\eta_{2}^{1}:D^{4}\{(2,4),(3,4)\}\rightarrow E\left[  1\right]
\]
is
\begin{align*}
(d_{1},d_{2},d_{3},d_{4}) &  \in D^{4}\{(2,4),(3,4)\}\mapsto\\
(d_{1},0,0,d_{2},d_{3},d_{4},d_{1}d_{4}) &  \in E[1]\text{.}%
\end{align*}

\end{proposition}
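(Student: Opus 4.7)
The plan is to follow the template set by Propositions \ref{t3.7}, \ref{t3.8}, and \ref{t3.9}: invoke the microlinearity of $M$ to reduce the question to a pullback assertion about Weil algebras, and then verify that Weil-algebra pullback by direct monomial bookkeeping. Concretely, I would prove the corresponding pullback diagram
\[
\begin{array}{ccc}
\mathcal{W}_{E[1]} & \xrightarrow{\mathcal{W}_{\eta_{1}^{1}}} & \mathcal{W}_{D^{4}\{(2,4),(3,4)\}}\\
\mathcal{W}_{\eta_{2}^{1}}\downarrow & & \downarrow\mathcal{W}_{i_{14}^{1}}\\
\mathcal{W}_{D^{4}\{(2,4),(3,4)\}} & \xrightarrow{\mathcal{W}_{i_{14}^{1}}} & \mathcal{W}_{D(2)}
\end{array}
\]
of Weil algebras as a separate lemma, after which the proposition follows immediately upon applying $M\otimes(-)$ and invoking microlinearity.

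For the Weil-algebra lemma I would fix explicit monomial bases. Going through the defining relations one by one shows that $\mathcal{W}_{D^{4}\{(2,4),(3,4)\}}$ has basis $\{1, Y_1, Y_2, Y_3, Y_4, Y_1Y_2, Y_1Y_3, Y_1Y_4, Y_2Y_3, Y_1Y_2Y_3\}$ (dimension $10$), while the fourteen killed pairs in the definition of $E[1]$ leave $\mathcal{W}_{E[1]}$ with basis
\[
\{1, X_1, \ldots, X_7, X_1X_2, X_1X_3, X_1X_4, X_1X_5, X_1X_6, X_2X_3, X_4X_5, X_1X_2X_3, X_1X_4X_5\}
\]
(dimension $17$). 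Since $\mathcal{W}_{D(2)}$ has basis $\{1, Z_1, Z_2\}$ and $\mathcal{W}_{i_{14}^{1}}$ simply reads off the coefficients of $1, Y_1, Y_4$ and discards the rest, the compatibility $\mathcal{W}_{i_{14}^{1}}(\gamma_1) = \mathcal{W}_{i_{14}^{1}}(\gamma_2)$ imposes exactly three linear equations on a pair $(\gamma_1,\gamma_2)$, giving a fibre product of dimension $2\cdot 10 - 3 = 17$, in agreement with $\dim\mathcal{W}_{E[1]}$.

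With bases in hand, I would parametrise a general $\gamma\in\mathcal{W}_{E[1]}$ by seventeen coefficients, compute $\mathcal{W}_{\eta_{1}^{1}}(\gamma)$ and $\mathcal{W}_{\eta_{2}^{1}}(\gamma)$ monomial by monomial, and exhibit the unique choice of the seventeen coefficients realising a prescribed compatible pair $(\gamma_1,\gamma_2)$. The one place where real care is required --- and, I suspect, the geometric point of the seemingly ad hoc definition of $E[1]$ --- is the interaction of the monomials $X_7$ and $X_1X_6$: under $\mathcal{W}_{\eta_{2}^{1}}$ both of them map to $Y_1Y_4$, because the seventh coordinate of $\eta_{2}^{1}$ is the product $d_1d_4$ of its first and sixth; under $\mathcal{W}_{\eta_{1}^{1}}$, by contrast, only $X_1X_6$ contributes to $Y_1Y_4$, since $X_7\mapsto 0$. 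Consequently the coefficient of $X_1X_6$ in $\gamma$ is pinned down by the $Y_1Y_4$-coefficient of $\gamma_1$, and the coefficient of $X_7$ is then forced to be the difference between the $Y_1Y_4$-coefficients of $\gamma_2$ and $\gamma_1$. Getting this accounting right --- and checking that the remaining fifteen coefficients of $\gamma$ are uncoupled and uniquely determined by the corresponding coefficients of $\gamma_1$ or $\gamma_2$ --- is the heart of the verification, but once the collision in the $Y_1Y_4$ slot is understood, the rest is an essentially mechanical check of the same flavour as Lemma \ref{t3.4}.
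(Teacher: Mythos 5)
Your proposal is correct and follows exactly the paper's route: reduce to the corresponding pullback of Weil algebras via microlinearity, then verify that pullback by explicit coefficient bookkeeping (the paper merely asserts the Weil-algebra pullback, in the style of Lemma \ref{t3.4}, whereas you supply the verification). Your dimension count ($2\cdot 10-3=17$), your bases, and your identification of the $Y_1Y_4$ collision between $X_7$ and $X_1X_6$ under $\mathcal{W}_{\eta_2^1}$ --- forcing the coefficient of $X_7$ to be the difference of the two $Y_1Y_4$-coefficients --- are all accurate.
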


\begin{proof}
This follows from the microlinearity of $M$ and the pullback diagram of Weil
algebras
\[%
\begin{array}
[c]{ccccc}
&  & \mathcal{W}_{\eta_{1}^{1}} &  & \\
& \mathcal{W}_{E[1]} & \rightarrow & \mathcal{W}_{D^{4}\{(2,4),(3,4)\}} & \\
\mathcal{W}_{\eta_{2}^{1}} & \downarrow &  & \downarrow & \mathcal{W}%
_{i_{14}^{1}}\\
& \mathcal{W}_{D^{4}\{(2,4),(3,4)\}} & \rightarrow & \mathcal{W}_{D(2)} & \\
&  & \mathcal{W}_{i_{14}^{1}} &  &
\end{array}
\]

\end{proof}

\begin{notation}
We will write $\iota_{1}^{1}$, $\iota_{2}^{1}$, $\iota_{3}^{1}$ and $\iota
_{4}^{1}$ for the assumptive mappings $\eta_{1}^{1}\circ\varphi_{1}^{3}$,
$\eta_{1}^{1}\circ\psi_{1}^{3}$, $\eta_{2}^{1}\circ\varphi_{1}^{3}\ $and
$\eta_{2}^{1}\circ\psi_{1}^{3}$ respectively. That is to say, we have
\begin{align*}
\iota_{1}^{1} &  :(d_{1},d_{2},d_{3})\in D^{3}\mapsto(d_{1},d_{2}%
,d_{3},0,0,0,0)\in E[1]\\
\iota_{2}^{1} &  :(d_{1},d_{2},d_{3})\in D^{3}\mapsto(d_{1},d_{2}%
,d_{3},0,0,d_{2}d_{3},0)\in E[1]\\
\iota_{3}^{1} &  :(d_{1},d_{2},d_{3})\in D^{3}\mapsto(d_{1},0,0,d_{2}%
,d_{3},0,0)\in E[1]\\
\iota_{4}^{1} &  :(d_{1},d_{2},d_{3})\in D^{3}\mapsto(d_{1},0,0,d_{2}%
,d_{3},d_{2}d_{3},d_{1}d_{2}d_{3})\in E[1]
\end{align*}

\end{notation}

\begin{corollary}
We have
\begin{align*}
& \left(
\begin{array}
[c]{c}%
\left(  M\otimes\mathcal{W}_{D^{3}}\right) \\
\times_{M\otimes\mathcal{W}_{D^{3}\{(2,3)\}}}\\
\left(  M\otimes\mathcal{W}_{D^{3}}\right)
\end{array}
\right)  \times_{M\otimes\mathcal{W}_{D(2)}}\left(
\begin{array}
[c]{c}%
\left(  M\otimes\mathcal{W}_{D^{3}}\right) \\
\times_{M\otimes\mathcal{W}_{D^{3}\{(2,3)\}}}\\
\left(  M\otimes\mathcal{W}_{D^{3}}\right)
\end{array}
\right) \\
& =M\otimes\mathcal{W}_{E[1]}%
\end{align*}
with the diagrams
\[%
\begin{array}
[c]{ccc}%
\left(
\begin{array}
[c]{c}%
\left(  \underset{1}{M\otimes\mathcal{W}_{D^{3}}}\right) \\
\times_{M\otimes\mathcal{W}_{D^{3}\{(2,3)\}}}\\
\left(  \underset{2}{M\otimes\mathcal{W}_{D^{3}}}\right)
\end{array}
\right)  \times_{M\otimes\mathcal{W}_{D(2)}}\left(
\begin{array}
[c]{c}%
\left(  \underset{3}{M\otimes\mathcal{W}_{D^{3}}}\right) \\
\times_{M\otimes\mathcal{W}_{D^{3}\{(2,3)\}}}\\
\left(  \underset{4}{M\otimes\mathcal{W}_{D^{3}}}\right)
\end{array}
\right)  & \rightarrow &
\begin{array}
[c]{c}%
\left(  \underset{1}{M\otimes\mathcal{W}_{D^{3}}}\right) \\
\times_{M\otimes\mathcal{W}_{D^{3}\{(2,3)\}}}\\
\left(  \underset{2}{M\otimes\mathcal{W}_{D^{3}}}\right)
\end{array}
\\
\parallel &  & \parallel\\
M\otimes\mathcal{W}_{E[1]} & \rightarrow &
\begin{array}
[c]{c}%
M\otimes\\
\mathcal{W}_{D^{4}\{(2,4),(3,4)\}}%
\end{array}
\\
& \mathrm{id}_{M}\otimes\mathcal{W}_{\eta_{1}^{1}} &
\end{array}
\]
and
\[%
\begin{array}
[c]{ccc}%
\left(
\begin{array}
[c]{c}%
\left(  \underset{1}{M\otimes\mathcal{W}_{D^{3}}}\right) \\
\times_{M\otimes\mathcal{W}_{D^{3}\{(2,3)\}}}\\
\left(  \underset{2}{M\otimes\mathcal{W}_{D^{3}}}\right)
\end{array}
\right)  \times_{M\otimes\mathcal{W}_{D(2)}}\left(
\begin{array}
[c]{c}%
\left(  \underset{3}{M\otimes\mathcal{W}_{D^{3}}}\right) \\
\times_{M\otimes\mathcal{W}_{D^{3}\{(2,3)\}}}\\
\left(  \underset{4}{M\otimes\mathcal{W}_{D^{3}}}\right)
\end{array}
\right)  & \rightarrow &
\begin{array}
[c]{c}%
\left(  \underset{3}{M\otimes\mathcal{W}_{D^{3}}}\right) \\
\times_{M\otimes\mathcal{W}_{D^{3}\{(2,3)\}}}\\
\left(  \underset{4}{M\otimes\mathcal{W}_{D^{3}}}\right)
\end{array}
\\
\parallel &  & \parallel\\
M\otimes\mathcal{W}_{E[1]} & \rightarrow &
\begin{array}
[c]{c}%
M\otimes\\
\mathcal{W}_{D^{4}\{(2,4),(3,4)\}}%
\end{array}
\\
& \mathrm{id}_{M}\otimes\mathcal{W}_{\eta_{2}^{1}} &
\end{array}
\]
being commutative, where unnamed arrows are canonical projections.
\end{corollary}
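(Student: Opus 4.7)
The plan is to obtain the identification as a straightforward composition of two pullback results already established. By Proposition~\ref{t3.11}, we have
\[
M\otimes\mathcal{W}_{E[1]} \;=\; \bigl(M\otimes\mathcal{W}_{D^{4}\{(2,4),(3,4)\}}\bigr)\times_{M\otimes\mathcal{W}_{D(2)}}\bigl(M\otimes\mathcal{W}_{D^{4}\{(2,4),(3,4)\}}\bigr),
\]
with projections given by $\mathrm{id}_{M}\otimes\mathcal{W}_{\eta_{1}^{1}}$ and $\mathrm{id}_{M}\otimes\mathcal{W}_{\eta_{2}^{1}}$ onto the two factors. On the other hand, by the corollary to Proposition~\ref{t3.7}, each of those two factors is itself identified with $\left(M\otimes\mathcal{W}_{D^{3}}\right)\times_{M\otimes\mathcal{W}_{D^{3}\{(2,3)\}}}\left(M\otimes\mathcal{W}_{D^{3}}\right)$, with projections $\mathrm{id}_{M}\otimes\mathcal{W}_{\varphi_{1}^{3}}$ and $\mathrm{id}_{M}\otimes\mathcal{W}_{\psi_{1}^{3}}$. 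Substituting the second identification into the first yields the asserted iterated fiber product description of $M\otimes\mathcal{W}_{E[1]}$.

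The next step is to read off the projection arrows. The canonical projection from the full iterated fiber product onto the first pair is, under the identifications above, the morphism $\mathrm{id}_{M}\otimes\mathcal{W}_{\eta_{1}^{1}}$ coming from Proposition~\ref{t3.11}, which maps into $M\otimes\mathcal{W}_{D^{4}\{(2,4),(3,4)\}}$; this is exactly the first pair $\left(\underset{1}{M\otimes\mathcal{W}_{D^{3}}}\right)\times_{M\otimes\mathcal{W}_{D^{3}\{(2,3)\}}}\left(\underset{2}{M\otimes\mathcal{W}_{D^{3}}}\right)$ under the earlier identification. Similarly, projection onto the second pair is $\mathrm{id}_{M}\otimes\mathcal{W}_{\eta_{2}^{1}}$. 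Thus both commutativity diagrams in the statement are just the two halves of Proposition~\ref{t3.11} rewritten under the substitution, so no new content is required.

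The only thing worth double-checking—and what I expect to be the main (purely bookkeeping) obstacle—is that the components of $\eta_{1}^{1}$ and $\eta_{2}^{1}$ are consistent with the four mappings $\iota_{1}^{1},\iota_{2}^{1},\iota_{3}^{1},\iota_{4}^{1}$ introduced in the preceding notation. Concretely, one must verify the identities
\[
\eta_{1}^{1}\circ\varphi_{1}^{3}=\iota_{1}^{1},\quad \eta_{1}^{1}\circ\psi_{1}^{3}=\iota_{2}^{1},\quad \eta_{2}^{1}\circ\varphi_{1}^{3}=\iota_{3}^{1},\quad \eta_{2}^{1}\circ\psi_{1}^{3}=\iota_{4}^{1},
\]
which are immediate from the explicit definitions: substituting $(d_{1},d_{2},d_{3},0)$ or $(d_{1},d_{2},d_{3},d_{2}d_{3})$ into the formulas for $\eta_{1}^{1}$ and $\eta_{2}^{1}$ reproduces the four tuples listed in the notation, including the entry $d_{1}d_{2}d_{3}$ arising as $d_{1}\cdot(d_{2}d_{3})$ in the last coordinate of $\iota_{4}^{1}$. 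Once these identifications are in hand, the asserted isomorphism and the commutativity of both projection diagrams follow automatically from the universal properties of the two pullbacks, so no further computation is needed.
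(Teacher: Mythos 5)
Your proposal is correct and matches the paper's (implicit) argument: the corollary is exactly the substitution of the identification $\left(M\otimes\mathcal{W}_{D^{3}}\right)\times_{M\otimes\mathcal{W}_{D^{3}\{(2,3)\}}}\left(M\otimes\mathcal{W}_{D^{3}}\right)=M\otimes\mathcal{W}_{D^{4}\{(2,4),(3,4)\}}$ from the corollary to Proposition \ref{t3.7} into the pullback of Proposition \ref{t3.11}, with the projections read off as $\mathrm{id}_{M}\otimes\mathcal{W}_{\eta_{1}^{1}}$ and $\mathrm{id}_{M}\otimes\mathcal{W}_{\eta_{2}^{1}}$. Your verification that $\eta_{1}^{1},\eta_{2}^{1}$ composed with $\varphi_{1}^{3},\psi_{1}^{3}$ reproduce $\iota_{1}^{1},\dots,\iota_{4}^{1}$ is also accurate.
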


\begin{proposition}
\label{t3.12}The diagram
\[%
\begin{array}
[c]{ccccc}
&  & \mathrm{id}_{M}\otimes\mathcal{W}_{\eta_{1}^{2}} &  & \\
& M\otimes\mathcal{W}_{E[2]} & \rightarrow & M\otimes\mathcal{W}%
_{D^{4}\{(1,4),(3,4)\}} & \\
\mathrm{id}_{M}\otimes\mathcal{W}_{\eta_{2}^{2}} & \downarrow &  & \downarrow
& \mathrm{id}_{M}\otimes\mathcal{W}_{i_{24}^{2}}\\
& M\otimes\mathcal{W}_{D^{4}\{(1,4),(3,4)\}} & \rightarrow & M\otimes
\mathcal{W}_{D(2)} & \\
&  & \mathrm{id}_{M}\otimes\mathcal{W}_{i_{24}^{2}} &  &
\end{array}
\]
is a pullback, where the assumptive object $E[2]$ is
\begin{align*}
&  D^{7}\{(1,6),(3,6),(4,6),(5,6),(1,7),(2,7),(3,7),(4,7),(5,7),(6,7),(1,4),\\
&  (1,5),(3,4),(3,5)\}\text{,}%
\end{align*}
the assumptive mapping $\eta_{1}^{2}:D^{4}\{(1,4),(3,4)\}\}\rightarrow E[2]$
is
\begin{align*}
(d_{1},d_{2},d_{3},d_{4}) &  \in D^{4}\{(1,4),(3,4)\}\mapsto\\
(d_{1},d_{2},d_{3},0,0,d_{4},0) &  \in E[2]\text{,}%
\end{align*}
and the assumptive mapping $\eta_{2}^{2}:D^{4}\{(1,4),(3,4)\}\rightarrow E[2]$
is
\begin{align*}
(d_{1},d_{2},d_{3},d_{4}) &  \in D^{4}\{(1,4),(3,4)\}\mapsto\\
(0,d_{2},0,d_{1},d_{3},d_{4},d_{2}d_{4}) &  \in E[2]\text{.}%
\end{align*}

\end{proposition}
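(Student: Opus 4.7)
The plan is to mirror the proof of Proposition \ref{t3.11} exactly. First reduce the statement to the analogous pullback of Weil algebras by invoking microlinearity of $M$: since the functor $M\otimes\mathcal{W}_{(-)}$ sends pullbacks of Weil algebras to pullbacks in $\mathcal{K}$ whenever $M$ is microlinear (and Weil-exponentiable), it suffices to verify that the underlying square
\[
\begin{array}[c]{ccc}
\mathcal{W}_{E[2]} & \underrightarrow{\mathcal{W}_{\eta_{1}^{2}}} & \mathcal{W}_{D^{4}\{(1,4),(3,4)\}}\\
\mathcal{W}_{\eta_{2}^{2}}\downarrow & & \downarrow\mathcal{W}_{i_{24}^{2}}\\
\mathcal{W}_{D^{4}\{(1,4),(3,4)\}} & \overrightarrow{\mathcal{W}_{i_{24}^{2}}} & \mathcal{W}_{D(2)}
\end{array}
\]
is a pullback of Weil algebras.

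The verification is a concrete polynomial computation in the spirit of Lemma \ref{t3.4}. I would begin by listing a basis of monomials for $\mathcal{W}_{D^{4}\{(1,4),(3,4)\}}$ (the surviving monomials in $X_{1},X_{2},X_{3},X_{4}$ modulo the relations $X_{1}X_{4}=X_{3}X_{4}=0$ plus the usual $X_{i}^{2}=0$) and similarly a basis for $\mathcal{W}_{E[2]}$ by inspecting the constraint set $\mathfrak{p}[2]=\{(1,6),(3,6),(4,6),(5,6),(1,7),\dots,(6,7),(1,4),(1,5),(3,4),(3,5)\}$. I would then take two general elements $\gamma_{1},\gamma_{2}\in\mathcal{W}_{D^{4}\{(1,4),(3,4)\}}$ written in these bases, compute their images under $\mathcal{W}_{i_{24}^{2}}$ (substitution $(X_{1},X_{2},X_{3},X_{4})\mapsto(0,X_{1},0,X_{2})$), and equate them. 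The resulting linear conditions identify the degree-$0$, the $X_{2}$, and the $X_{2}X_{4}$-type coefficients of $\gamma_{1}$ with those of $\gamma_{2}$; all other coefficients are free on each side.

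Given such a matching pair, I would write down the candidate polynomial $\gamma\in\mathcal{W}_{E[2]}$ whose free coefficients on the monomials involving $X_{6}$ carry the $\gamma_{1}$-data and whose free coefficients on the monomials involving $X_{7}$ carry the $\gamma_{2}$-data (with the common coefficients placed on the monomials in $X_{1},\dots,X_{5}$ alone). Then one checks that the substitutions implementing $\eta_{1}^{2}$ (namely $X_{6}\mapsto X_{4}$, $X_{7}\mapsto 0$, and the rest as identity) and $\eta_{2}^{2}$ (namely the substitution sending $X_{1}\mapsto 0,X_{4}\mapsto X_{1},X_{6}\mapsto X_{4},X_{7}\mapsto X_{2}X_{4}$, etc.) recover $\gamma_{1}$ and $\gamma_{2}$ respectively, and that no other $\gamma$ can do so.

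The main obstacle is purely bookkeeping: $E[2]$ has seven variables with a long list of binary annihilation relations, so enumerating the surviving monomials of $\mathcal{W}_{E[2]}$ and matching coefficients under the two substitutions $\eta_{1}^{2}$ and $\eta_{2}^{2}$ is tedious rather than conceptually hard. The extra $(d_{1},d_{3})$-asymmetry compared with $E[1]$ (which had $(d_{2},d_{3})$-asymmetry) contributes nothing new: the argument is formally identical to that of Proposition \ref{t3.11} up to relabeling the roles of the first and second coordinates, so once the basis of $\mathcal{W}_{E[2]}$ is correctly tabulated the pullback property follows by the same coefficient-matching argument used for Lemma \ref{t3.4}.
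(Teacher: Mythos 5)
Your proposal follows exactly the route the paper takes: reduce to the corresponding square of Weil algebras via the microlinearity of $M$, and verify that square is a pullback by the coefficient-matching computation of Lemma \ref{t3.4} (the paper in fact leaves this second step implicit). One small correction to your bookkeeping: since the two generators of $\mathcal{W}_{D(2)}$ multiply to zero, the map $\mathcal{W}_{i_{24}^{2}}$ kills the $X_{2}X_{4}$-coefficient, so the matching conditions on $\gamma_{1},\gamma_{2}$ are equality of the constant, $X_{2}$- and $X_{4}$-coefficients only, not of any $X_{2}X_{4}$-type coefficient.
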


\begin{proof}
This follows from the microlinearity of $M$ and the pullback diagram of Weil
algebras
\[%
\begin{array}
[c]{ccccc}
&  & \mathcal{W}_{\eta_{1}^{2}} &  & \\
& \mathcal{W}_{E[2]} & \rightarrow & \mathcal{W}_{D^{4}\{(1,4),(3,4)\}} & \\
\mathcal{W}_{\eta_{2}^{2}} & \downarrow &  & \downarrow & \mathcal{W}%
_{i_{24}^{2}}\\
& \mathcal{W}_{D^{4}\{(1,4),(3,4)\}} & \rightarrow & \mathcal{W}_{D(2)} & \\
&  & \mathcal{W}_{i_{24}^{2}} &  &
\end{array}
\]

\end{proof}

\begin{notation}
We will write $\iota_{1}^{2}$, $\iota_{2}^{2}$, $\iota_{3}^{2}$ and $\iota
_{4}^{2}$ for the assumptive mappings $\eta_{1}^{2}\circ\varphi_{2}^{3}$,
$\eta_{1}^{2}\circ\psi_{2}^{3}$, $\eta_{2}^{2}\circ\varphi_{2}^{3}\ $and
$\eta_{2}^{2}\circ\psi_{2}^{3}$ respectively. That is to say, we have
\begin{align*}
\iota_{1}^{2} &  :(d_{1},d_{2},d_{3})\in D^{3}\mapsto(d_{1},d_{2}%
,d_{3},0,0,0,0)\in E[2]\\
\iota_{2}^{2} &  :(d_{1},d_{2},d_{3})\in D^{3}\mapsto(d_{1},d_{2}%
,d_{3},0,0,d_{2}d_{3},0)\in E[2]\\
\iota_{3}^{2} &  :(d_{1},d_{2},d_{3})\in D^{3}\mapsto(0,d_{2},0,d_{3}%
,d_{1},0,0)\in E[2]\\
\iota_{4}^{2} &  :(d_{1},d_{2},d_{3})\in D^{3}\mapsto(0,d_{2},0,d_{3}%
,d_{1},d_{1}d_{3},d_{1}d_{2}d_{3})\in E[2]
\end{align*}

\end{notation}

\begin{corollary}
We have
\begin{align*}
& \left(
\begin{array}
[c]{c}%
\left(  M\otimes\mathcal{W}_{D^{3}}\right) \\
\times_{M\otimes\mathcal{W}_{D^{3}\{(1,3)\}}}\\
\left(  M\otimes\mathcal{W}_{D^{3}}\right)
\end{array}
\right)  \times_{M\otimes\mathcal{W}_{D(2)}}\left(
\begin{array}
[c]{c}%
\left(  M\otimes\mathcal{W}_{D^{3}}\right) \\
\times_{M\otimes\mathcal{W}_{D^{3}\{(1,3)\}}}\\
\left(  M\otimes\mathcal{W}_{D^{3}}\right)
\end{array}
\right) \\
& =M\otimes\mathcal{W}_{E[2]}%
\end{align*}
with the diagrams
\[%
\begin{array}
[c]{ccc}%
\left(
\begin{array}
[c]{c}%
\left(  \underset{1}{M\otimes\mathcal{W}_{D^{3}}}\right) \\
\times_{M\otimes\mathcal{W}_{D^{3}\{(1,3)\}}}\\
\left(  \underset{2}{M\otimes\mathcal{W}_{D^{3}}}\right)
\end{array}
\right)  \times_{M\otimes\mathcal{W}_{D(2)}}\left(
\begin{array}
[c]{c}%
\left(  \underset{3}{M\otimes\mathcal{W}_{D^{3}}}\right) \\
\times_{M\otimes\mathcal{W}_{D^{3}\{(1,3)\}}}\\
\left(  \underset{4}{M\otimes\mathcal{W}_{D^{3}}}\right)
\end{array}
\right)  & \rightarrow &
\begin{array}
[c]{c}%
\left(  \underset{1}{M\otimes\mathcal{W}_{D^{3}}}\right) \\
\times_{M\otimes\mathcal{W}_{D^{3}\{(1,3)\}}}\\
\left(  \underset{2}{M\otimes\mathcal{W}_{D^{3}}}\right)
\end{array}
\\
\parallel &  & \parallel\\
M\otimes\mathcal{W}_{E[2]} & \rightarrow &
\begin{array}
[c]{c}%
M\otimes\\
\mathcal{W}_{D^{4}\{(1,4),(3,4)\}}%
\end{array}
\\
& \mathrm{id}_{M}\otimes\mathcal{W}_{\eta_{1}^{2}} &
\end{array}
\]
and
\[%
\begin{array}
[c]{ccc}%
\left(
\begin{array}
[c]{c}%
\left(  \underset{1}{M\otimes\mathcal{W}_{D^{3}}}\right) \\
\times_{M\otimes\mathcal{W}_{D^{3}\{(1,3)\}}}\\
\left(  \underset{2}{M\otimes\mathcal{W}_{D^{3}}}\right)
\end{array}
\right)  \times_{M\otimes\mathcal{W}_{D(2)}}\left(
\begin{array}
[c]{c}%
\left(  \underset{3}{M\otimes\mathcal{W}_{D^{3}}}\right) \\
\times_{M\otimes\mathcal{W}_{D^{3}\{(1,3)\}}}\\
\left(  \underset{4}{M\otimes\mathcal{W}_{D^{3}}}\right)
\end{array}
\right)  & \rightarrow &
\begin{array}
[c]{c}%
\left(  \underset{3}{M\otimes\mathcal{W}_{D^{3}}}\right) \\
\times_{M\otimes\mathcal{W}_{D^{3}\{(1,3)\}}}\\
\left(  \underset{4}{M\otimes\mathcal{W}_{D^{3}}}\right)
\end{array}
\\
\parallel &  & \parallel\\
M\otimes\mathcal{W}_{E[2]} & \rightarrow &
\begin{array}
[c]{c}%
M\otimes\\
\mathcal{W}_{D^{4}\{(1,4),(3,4)\}}%
\end{array}
\\
& \mathrm{id}_{M}\otimes\mathcal{W}_{\eta_{2}^{1}} &
\end{array}
\]
being commutative, where unnamed arrows are the canonical projections.
\end{corollary}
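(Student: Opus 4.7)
The plan is to assemble this identification by stacking two pullback squares: the one given by Proposition~\ref{t3.12} on the outside and two copies of the pullback given by Proposition~\ref{t3.8} on the inside. Explicitly, the Corollary of Proposition~\ref{t3.8} already identifies each of the two bracketed factors
\[
\bigl(M\otimes\mathcal{W}_{D^{3}}\bigr)\times_{M\otimes\mathcal{W}_{D^{3}\{(1,3)\}}}\bigl(M\otimes\mathcal{W}_{D^{3}}\bigr)
\]
canonically with $M\otimes\mathcal{W}_{D^{4}\{(1,4),(3,4)\}}$. Substituting these two identifications into the left-hand side of the claim reduces it to the pullback
\[
\bigl(M\otimes\mathcal{W}_{D^{4}\{(1,4),(3,4)\}}\bigr)\times_{M\otimes\mathcal{W}_{D(2)}}\bigl(M\otimes\mathcal{W}_{D^{4}\{(1,4),(3,4)\}}\bigr),
\]
whose two legs to $M\otimes\mathcal{W}_{D(2)}$ are both $\mathrm{id}_{M}\otimes\mathcal{W}_{i_{24}^{2}}$. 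Proposition~\ref{t3.12} then identifies this latter pullback with $M\otimes\mathcal{W}_{E[2]}$, and composing the two isomorphisms yields the asserted equality.

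To verify the commutativity of the two projection diagrams, I would invoke the functoriality of $\mathcal{W}_{(-)}$: each asserted commutative square decomposes naturally into an outer square, which commutes by Proposition~\ref{t3.12} together with the defining property of $\eta_{1}^{2}$ and $\eta_{2}^{2}$, and an inner square, which commutes by Proposition~\ref{t3.8} together with the defining property of $\varphi_{2}^{3}$ and $\psi_{2}^{3}$. The $\iota_{i}^{2}$ introduced in the preceding Notation are precisely the four composites $\eta_{1}^{2}\circ\varphi_{2}^{3}$, $\eta_{1}^{2}\circ\psi_{2}^{3}$, $\eta_{2}^{2}\circ\varphi_{2}^{3}$, $\eta_{2}^{2}\circ\psi_{2}^{3}$ produced by this decomposition, and one should at this stage merely confirm that these four composites agree with the explicit formulas already listed, which is a direct evaluation of tuples in $E[2]$.

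The main obstacle is purely notational book-keeping: one must keep track of which leg of each inner pullback uses $\varphi_{2}^{3}$ versus $\psi_{2}^{3}$, and then check that after the inner identification the two induced maps $M\otimes\mathcal{W}_{D^{4}\{(1,4),(3,4)\}}\to M\otimes\mathcal{W}_{D(2)}$ both reduce to $\mathrm{id}_{M}\otimes\mathcal{W}_{i_{24}^{2}}$, as required to feed Proposition~\ref{t3.12}. No Weil-algebra computation beyond those already carried out is needed, since the identification of an iterated fibered product with a nested pair of pullbacks is a formal consequence of the universal property of pullbacks.
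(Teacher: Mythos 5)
Your proposal is correct and matches the paper's intended argument: the corollary is left unproved there precisely because it follows, as you describe, by composing the identification of each inner fibered product with $M\otimes\mathcal{W}_{D^{4}\{(1,4),(3,4)\}}$ (corollary to Proposition \ref{t3.8}) with the pullback of Proposition \ref{t3.12}, whose two legs are both $\mathrm{id}_{M}\otimes\mathcal{W}_{i_{24}^{2}}$. The only point worth noting is that the label $\mathrm{id}_{M}\otimes\mathcal{W}_{\eta_{2}^{1}}$ on the second diagram in the statement is a typo for $\mathrm{id}_{M}\otimes\mathcal{W}_{\eta_{2}^{2}}$, which your reading silently and correctly repairs.
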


\begin{proposition}
\label{t3.13}The diagram
\[%
\begin{array}
[c]{ccccc}
&  & \mathrm{id}_{M}\otimes\mathcal{W}_{\eta_{1}^{3}} &  & \\
& M\otimes\mathcal{W}_{E[3]} & \rightarrow & M\otimes\mathcal{W}%
_{D^{4}\{(1,4),(2,4)\}} & \\
\mathrm{id}_{M}\otimes\mathcal{W}_{\eta_{2}^{3}} & \downarrow &  & \downarrow
& \mathrm{id}_{M}\otimes\mathcal{W}_{i_{34}^{3}}\\
& M\otimes\mathcal{W}_{D^{4}\{(1,4),(2,4)\}} & \rightarrow & M\otimes
\mathcal{W}_{D(2)} & \\
&  & \mathrm{id}_{M}\otimes\mathcal{W}_{i_{34}^{3}} &  &
\end{array}
\]
is a pullback, where the assumptive object $E[3]$ is
\begin{align*}
&  D^{7}\{(1,6),(2,6),(4,6),(5,6),(1,7),(2,7),(3,7),(4,7),(5,7),(6,7),(1,4),\\
&  (1,5),(2,4),(2,5)\}\}\text{,}%
\end{align*}
the assumptive mapping $\eta_{1}^{3}:D^{4}\{(1,4),(2,4)\}\}\rightarrow E[3]$
is
\begin{align*}
(d_{1},d_{2},d_{3},d_{4}) &  \in D^{4}\{(1,4),(2,4)\}\mapsto\\
(d_{1},d_{2},d_{3},0,0,d_{4},0) &  \in E[3]\text{,}%
\end{align*}
and the assumptive mapping $\eta_{2}^{3}:D^{4}\{(1,4),(3,4)\}\rightarrow E[3]$
is
\begin{align*}
(d_{1},d_{2},d_{3},d_{4}) &  \in D^{4}\{(1,4),(2,4)\}\mapsto\\
(0,0,d_{3},d_{1},d_{2},d_{4},d_{3}d_{4}) &  \in E[3]\text{.}%
\end{align*}

\end{proposition}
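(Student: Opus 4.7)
The plan is to follow the two-step template used in the proofs of Propositions \ref{t3.11} and \ref{t3.12}. First, invoking the microlinearity (and Weil exponentiability) of $M$, the functor $\mathrm{id}_{M}\otimes\mathcal{W}_{(-)}$ carries pullbacks of Weil algebras to pullbacks in $\mathcal{K}$, so it suffices to establish the corresponding pullback square of Weil algebras with $\mathrm{id}_{M}\otimes$ stripped from each arrow. This reduces the claim to a purely algebraic assertion about $\mathcal{W}_{E[3]}$, two copies of $\mathcal{W}_{D^{4}\{(1,4),(2,4)\}}$, and $\mathcal{W}_{D(2)}$.

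The second step is an explicit coefficient computation in the spirit of Lemma \ref{t3.4}. I would read off the monomial basis of $\mathcal{W}_{E[3]}$ from its defining pair-vanishings and then compute $\mathcal{W}_{\eta_{1}^{3}}$ and $\mathcal{W}_{\eta_{2}^{3}}$ by substitution on generators: under $\mathcal{W}_{\eta_{1}^{3}}$ one has $X_{i}\mapsto X_{i}$ for $i=1,2,3$, $X_{6}\mapsto X_{4}$, and $X_{4},X_{5},X_{7}\mapsto 0$; under $\mathcal{W}_{\eta_{2}^{3}}$ one has $X_{3}\mapsto X_{3}$, $X_{4}\mapsto X_{1}$, $X_{5}\mapsto X_{2}$, $X_{6}\mapsto X_{4}$, $X_{7}\mapsto X_{3}X_{4}$, and $X_{1},X_{2}\mapsto 0$. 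Given $\gamma_{1},\gamma_{2}\in\mathcal{W}_{D^{4}\{(1,4),(2,4)\}}$ that agree under $\mathcal{W}_{i_{34}^{3}}$---which means exactly that their constant terms and their $X_{3}$- and $X_{4}$-coefficients coincide---I would solve for the coefficients of a candidate $\gamma\in\mathcal{W}_{E[3]}$. The key observation is that the auxiliary variable $X_{7}$ is introduced precisely to absorb the possible discrepancy between the $X_{3}X_{4}$-coefficients of $\gamma_{1}$ and $\gamma_{2}$: one forces the $X_{7}$-coefficient of $\gamma$ to equal this difference, while all other coefficients of $\gamma$ are read off termwise from $\gamma_{1}$ or $\gamma_{2}$. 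Existence and uniqueness are then immediate.

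The main obstacle is not analytic but bookkeeping: one must verify that the fourteen pair-vanishings defining $E[3]$ have been arranged so that the surviving monomials of $\mathcal{W}_{E[3]}$ split cleanly into monomials detected only by $\eta_{1}^{3}$, monomials detected only by $\eta_{2}^{3}$, and a small overlap handled by $X_{7}$. A dimension count provides a reassuring sanity check, namely $\dim\mathcal{W}_{E[3]}=\dim\mathcal{W}_{D^{4}\{(1,4),(2,4)\}}+\dim\mathcal{W}_{D^{4}\{(1,4),(2,4)\}}-\dim\mathcal{W}_{D(2)}$, after which the linear system for the coefficients of $\gamma$ resolves itself mechanically, completing the verification.
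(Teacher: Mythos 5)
Your proposal is correct and follows the same route as the paper: the paper's proof of Proposition \ref{t3.13} consists precisely of invoking the microlinearity of $M$ to reduce to the corresponding pullback square of Weil algebras, whose verification it leaves implicit. Your explicit coefficient analysis (the substitutions for $\mathcal{W}_{\eta_{1}^{3}}$ and $\mathcal{W}_{\eta_{2}^{3}}$, the characterization of agreement under $\mathcal{W}_{i_{34}^{3}}$ via the constant, $X_{3}$- and $X_{4}$-coefficients, the role of $X_{7}$ in absorbing the $X_{3}X_{4}$-discrepancy, and the dimension count $17=10+10-3$) correctly supplies that omitted verification in the style of Lemma \ref{t3.4}.
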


\begin{proof}
This follows from the microlinearity of $M$ and the pullback diagram of Weil
algebras
\[%
\begin{array}
[c]{ccccc}
&  & \mathcal{W}_{\eta_{1}^{3}} &  & \\
& \mathcal{W}_{E[3]} & \rightarrow & \mathcal{W}_{D^{4}\{(1,4),(2,4)\}} & \\
\mathcal{W}_{\eta_{2}^{3}} & \downarrow &  & \downarrow & \mathcal{W}%
_{i_{34}^{3}}\\
& \mathcal{W}_{D^{4}\{(1,4),(2,4)\}} & \rightarrow & \mathcal{W}_{D(2)} & \\
&  & \mathcal{W}_{i_{34}^{3}} &  &
\end{array}
\]

\end{proof}

\begin{notation}
We will write $\iota_{1}^{3}$, $\iota_{2}^{3}$, $\iota_{3}^{3}$ and $\iota
_{4}^{3}$ for the assumptive mappings $\eta_{1}^{3}\circ\varphi_{3}^{3}$,
$\eta_{1}^{3}\circ\psi_{3}^{3}$, $\eta_{2}^{3}\circ\varphi_{3}^{3}\ $and
$\eta_{2}^{3}\circ\psi_{3}^{3}$ respectively. That is to say, we have
\begin{align*}
\iota_{1}^{3} &  :(d_{1},d_{2},d_{3})\in D^{3}\mapsto(d_{1},d_{2}%
,d_{3},0,0,0,0)\in E[3]\\
\iota_{2}^{3} &  :(d_{1},d_{2},d_{3})\in D^{3}\mapsto(d_{1},d_{2}%
,d_{3},0,0,d_{1}d_{2},0)\in E[3]\\
\iota_{3}^{3} &  :(d_{1},d_{2},d_{3})\in D^{3}\mapsto(0,0,d_{3},d_{1}%
,d_{2},0,0)\in E[3]\\
\iota_{4}^{3} &  :(d_{1},d_{2},d_{3})\in D^{3}\mapsto(0,0,d_{3},d_{1}%
,d_{2},d_{1}d_{2},d_{1}d_{2}d_{3})\in E[3]
\end{align*}

\end{notation}

\begin{corollary}
We have
\begin{align*}
& \left(
\begin{array}
[c]{c}%
\left(  M\otimes\mathcal{W}_{D^{3}}\right) \\
\times_{M\otimes\mathcal{W}_{D^{3}\{(1,2)\}}}\\
\left(  M\otimes\mathcal{W}_{D^{3}}\right)
\end{array}
\right)  \times_{M\otimes\mathcal{W}_{D(2)}}\left(
\begin{array}
[c]{c}%
\left(  M\otimes\mathcal{W}_{D^{3}}\right) \\
\times_{M\otimes\mathcal{W}_{D^{3}\{(1,2)\}}}\\
\left(  M\otimes\mathcal{W}_{D^{3}}\right)
\end{array}
\right) \\
& =M\otimes\mathcal{W}_{E[3]}%
\end{align*}
with the diagrams
\[%
\begin{array}
[c]{ccc}%
\left(
\begin{array}
[c]{c}%
\left(  \underset{1}{M\otimes\mathcal{W}_{D^{3}}}\right) \\
\times_{M\otimes\mathcal{W}_{D^{3}\{(1,2)\}}}\\
\left(  \underset{2}{M\otimes\mathcal{W}_{D^{3}}}\right)
\end{array}
\right)  \times_{M\otimes\mathcal{W}_{D(2)}}\left(
\begin{array}
[c]{c}%
\left(  \underset{3}{M\otimes\mathcal{W}_{D^{3}}}\right) \\
\times_{M\otimes\mathcal{W}_{D^{3}\{(1,2)\}}}\\
\left(  \underset{4}{M\otimes\mathcal{W}_{D^{3}}}\right)
\end{array}
\right)  & \rightarrow &
\begin{array}
[c]{c}%
\left(  \underset{1}{M\otimes\mathcal{W}_{D^{3}}}\right) \\
\times_{M\otimes\mathcal{W}_{D^{3}\{(1,2)\}}}\\
\left(  \underset{2}{M\otimes\mathcal{W}_{D^{3}}}\right)
\end{array}
\\
\parallel &  & \parallel\\
M\otimes\mathcal{W}_{E[3]} & \rightarrow &
\begin{array}
[c]{c}%
M\otimes\\
\mathcal{W}_{D^{4}\{(1,4),(2,4)\}}%
\end{array}
\\
& \mathrm{id}_{M}\otimes\mathcal{W}_{\eta_{1}^{3}} &
\end{array}
\]
and
\[%
\begin{array}
[c]{ccc}%
\left(
\begin{array}
[c]{c}%
\left(  \underset{1}{M\otimes\mathcal{W}_{D^{3}}}\right) \\
\times_{M\otimes\mathcal{W}_{D^{3}\{(1,2)\}}}\\
\left(  \underset{2}{M\otimes\mathcal{W}_{D^{3}}}\right)
\end{array}
\right)  \times_{M\otimes\mathcal{W}_{D(2)}}\left(
\begin{array}
[c]{c}%
\left(  \underset{3}{M\otimes\mathcal{W}_{D^{3}}}\right) \\
\times_{M\otimes\mathcal{W}_{D^{3}\{(1,2)\}}}\\
\left(  \underset{4}{M\otimes\mathcal{W}_{D^{3}}}\right)
\end{array}
\right)  & \rightarrow &
\begin{array}
[c]{c}%
\left(  \underset{3}{M\otimes\mathcal{W}_{D^{3}}}\right) \\
\times_{M\otimes\mathcal{W}_{D^{3}\{(1,2)\}}}\\
\left(  \underset{4}{M\otimes\mathcal{W}_{D^{3}}}\right)
\end{array}
\\
\parallel &  & \parallel\\
M\otimes\mathcal{W}_{E[3]} & \rightarrow &
\begin{array}
[c]{c}%
M\otimes\\
\mathcal{W}_{D^{4}\{(1,4),(2,4)\}}%
\end{array}
\\
& \mathrm{id}_{M}\otimes\mathcal{W}_{\eta_{2}^{3}} &
\end{array}
\]
being commutative, where unnamed arrows are the canonical projections.
\end{corollary}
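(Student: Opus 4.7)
The plan is to derive this corollary directly by composing the identification of Proposition \ref{t3.9} (used twice) with the identification of Proposition \ref{t3.13}, rather than redoing any Weil-algebra computation.

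First I would invoke the corollary of Proposition \ref{t3.9} to identify each of the two inner pullbacks
\[
\left(M\otimes\mathcal{W}_{D^{3}}\right)\times_{M\otimes\mathcal{W}_{D^{3}\{(1,2)\}}}\left(M\otimes\mathcal{W}_{D^{3}}\right)
\]
with $M\otimes\mathcal{W}_{D^{4}\{(1,4),(2,4)\}}$, together with the commutative squares expressing the two factor projections as $\mathrm{id}_{M}\otimes\mathcal{W}_{\varphi_{3}^{3}}$ and $\mathrm{id}_{M}\otimes\mathcal{W}_{\psi_{3}^{3}}$. Under these identifications, the outer fibre product over $M\otimes\mathcal{W}_{D(2)}$ in the statement becomes the fibre product
\[
\left(M\otimes\mathcal{W}_{D^{4}\{(1,4),(2,4)\}}\right)\times_{M\otimes\mathcal{W}_{D(2)}}\left(M\otimes\mathcal{W}_{D^{4}\{(1,4),(2,4)\}}\right),
\]
where the two structure maps to $M\otimes\mathcal{W}_{D(2)}$ are both $\mathrm{id}_{M}\otimes\mathcal{W}_{i_{34}^{3}}$. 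This is exactly the cospan appearing in Proposition \ref{t3.13}.

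Next I would apply Proposition \ref{t3.13}, whose pullback square identifies that cospan's fibre product with $M\otimes\mathcal{W}_{E[3]}$, the two legs of the pullback being $\mathrm{id}_{M}\otimes\mathcal{W}_{\eta_{1}^{3}}$ and $\mathrm{id}_{M}\otimes\mathcal{W}_{\eta_{2}^{3}}$. Composing the two pullback identifications yields the claimed equality. The two asserted commutative diagrams are obtained by pasting: the top (resp.\ bottom) half is a square from Proposition \ref{t3.13} expressing the projection to $M\otimes\mathcal{W}_{D^{4}\{(1,4),(2,4)\}}$ as $\mathrm{id}_{M}\otimes\mathcal{W}_{\eta_{1}^{3}}$ (resp.\ $\mathrm{id}_{M}\otimes\mathcal{W}_{\eta_{2}^{3}}$), while the projections from $M\otimes\mathcal{W}_{D^{4}\{(1,4),(2,4)\}}$ onto the respective inner pullbacks are already the canonical projections provided by the corollary of Proposition \ref{t3.9}.

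The only non-routine point, and the one I would check carefully, is the compatibility of the structure map to $M\otimes\mathcal{W}_{D(2)}$: under the identification of Proposition \ref{t3.9} the canonical projection from the inner pullback down to $M\otimes\mathcal{W}_{D(2)}$ (which is induced by the common map to $M\otimes\mathcal{W}_{D^{3}\{(1,2)\}}$ composed appropriately) must coincide with $\mathrm{id}_{M}\otimes\mathcal{W}_{i_{34}^{3}}$, i.e.\ with the map arising from $(d_{1},d_{2})\mapsto(0,0,d_{1},d_{2})\in D^{4}\{(1,4),(2,4)\}$. This is exactly how the notation $\zeta^{\underset{3}{\overset{\cdot}{-}}}$ and $i_{34}^{3}$ were introduced earlier, so the verification reduces to noting that the composites $\eta_{k}^{3}\circ\varphi_{3}^{3}$ and $\eta_{k}^{3}\circ\psi_{3}^{3}$ ($k=1,2$) are precisely the $\iota_{j}^{3}$'s already recorded in the notation preceding the corollary; no further calculation is needed.
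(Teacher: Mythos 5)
Your proposal is correct and follows exactly the route the paper intends: the corollary is stated immediately after Proposition \ref{t3.13} with no separate argument, precisely because it is obtained by substituting the identification $\left(M\otimes\mathcal{W}_{D^{3}}\right)\times_{M\otimes\mathcal{W}_{D^{3}\{(1,2)\}}}\left(M\otimes\mathcal{W}_{D^{3}}\right)=M\otimes\mathcal{W}_{D^{4}\{(1,4),(2,4)\}}$ from the corollary of Proposition \ref{t3.9} into the pullback square of Proposition \ref{t3.13} and pasting the resulting commutative squares. Your explicit check that the structure maps to $M\otimes\mathcal{W}_{D(2)}$ become $\mathrm{id}_{M}\otimes\mathcal{W}_{i_{34}^{3}}$, via the composites $\eta_{k}^{3}\circ\varphi_{3}^{3}$ and $\eta_{k}^{3}\circ\psi_{3}^{3}$ being the recorded $\iota_{j}^{3}$, is the right point to verify and is handled correctly.
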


Now we come to the crucial step in the proof of the general Jacobi identity.

\begin{notation}
A limit of the diagram
\[%
\begin{array}
[c]{ccccc}
&  &
\begin{array}
[c]{c}%
\left(
\begin{array}
[c]{c}%
\underset{321}{\left(  M\otimes\mathcal{W}_{D^{3}}\right)  }\\
\times_{M\otimes\mathcal{W}_{D^{3}\{(2,3)\}}}\\
\underset{231}{\left(  M\otimes\mathcal{W}_{D^{3}}\right)  }%
\end{array}
\right) \\
\times_{M\otimes\mathcal{W}_{D(2)}}\\
\left(
\begin{array}
[c]{c}%
\underset{132}{\left(  M\otimes\mathcal{W}_{D^{3}}\right)  }\\
\times_{M\otimes\mathcal{W}_{D^{3}\{(2,3)\}}}\\
\underset{123}{\left(  M\otimes\mathcal{W}_{D^{3}}\right)  }%
\end{array}
\right)
\end{array}
&  & \\
& \swarrow &  & \searrow & \\%
\begin{array}
[c]{c}%
\underset{231}{\left(  M\otimes\mathcal{W}_{D^{3}}\right)  }\\
\times_{M}\\
\underset{132}{\left(  M\otimes\mathcal{W}_{D^{3}}\right)  }%
\end{array}
&  &  &  &
\begin{array}
[c]{c}%
\underset{123}{\left(  M\otimes\mathcal{W}_{D^{3}}\right)  }\\
\times_{M}\\
\underset{321}{\left(  M\otimes\mathcal{W}_{D^{3}}\right)  }%
\end{array}
\\
\uparrow &  &  &  & \uparrow\\%
\begin{array}
[c]{c}%
\left(
\begin{array}
[c]{c}%
\underset{132}{\left(  M\otimes\mathcal{W}_{D^{3}}\right)  }\\
\times_{M\otimes\mathcal{W}_{D^{3}\{(1,3)\}}}\\
\underset{312}{\left(  M\otimes\mathcal{W}_{D^{3}}\right)  }%
\end{array}
\right) \\
\times_{M\otimes\mathcal{W}_{D(2)}}\\
\left(
\begin{array}
[c]{c}%
\underset{213}{\left(  M\otimes\mathcal{W}_{D^{3}}\right)  }\\
\times_{M\otimes\mathcal{W}_{D^{3}\{(1,3)\}}}\\
\underset{231}{\left(  M\otimes\mathcal{W}_{D^{3}}\right)  }%
\end{array}
\right)
\end{array}
&  &  &  &
\begin{array}
[c]{c}%
\left(
\begin{array}
[c]{c}%
\underset{213}{\left(  M\otimes\mathcal{W}_{D^{3}}\right)  }\\
\times_{M\otimes\mathcal{W}_{D^{3}\{(1,2)\}}}\\
\underset{123}{\left(  M\otimes\mathcal{W}_{D^{3}}\right)  }%
\end{array}
\right) \\
\times_{M\otimes\mathcal{W}_{D(2)}}\\
\left(
\begin{array}
[c]{c}%
\underset{321}{\left(  M\otimes\mathcal{W}_{D^{3}}\right)  }\\
\times_{M\otimes\mathcal{W}_{D^{3}\{(1,2)\}}}\\
\underset{312}{\left(  M\otimes\mathcal{W}_{D^{3}}\right)  }%
\end{array}
\right)
\end{array}
\\
& \searrow &  & \swarrow & \\
&  &
\begin{array}
[c]{c}%
\underset{312}{\left(  M\otimes\mathcal{W}_{D^{3}}\right)  }\\
\times_{M}\\
\underset{213}{\left(  M\otimes\mathcal{W}_{D^{3}}\right)  }%
\end{array}
&  &
\end{array}
\]
with every arrow being the canonical projection is denoted by
\[
\left[
\begin{array}
[c]{ccc}
&
\begin{array}
[c]{c}%
\left(
\begin{array}
[c]{c}%
\underset{321}{\left(  M\otimes\mathcal{W}_{D^{3}}\right)  }\\
\times_{M\otimes\mathcal{W}_{D^{3}\{(2,3)\}}}\\
\underset{231}{\left(  M\otimes\mathcal{W}_{D^{3}}\right)  }%
\end{array}
\right) \\
\times_{M\otimes\mathcal{W}_{D(2)}}\\
\left(
\begin{array}
[c]{c}%
\underset{132}{\left(  M\otimes\mathcal{W}_{D^{3}}\right)  }\\
\times_{M\otimes\mathcal{W}_{D^{3}\{(2,3)\}}}\\
\underset{123}{\left(  M\otimes\mathcal{W}_{D^{3}}\right)  }%
\end{array}
\right)
\end{array}
& \\%
\begin{array}
[c]{c}%
\times_{M\otimes\mathcal{W}_{D^{3}\oplus D^{3}}}\\
\,\\
\,
\end{array}
&  &
\begin{array}
[c]{c}%
\times_{M\otimes\mathcal{W}_{D^{3}\oplus D^{3}}}\\
\,\\
\,
\end{array}
\\%
\begin{array}
[c]{c}%
\left(
\begin{array}
[c]{c}%
\underset{132}{\left(  M\otimes\mathcal{W}_{D^{3}}\right)  }\\
\times_{M\otimes\mathcal{W}_{D^{3}\{(1,3)\}}}\\
\underset{312}{\left(  M\otimes\mathcal{W}_{D^{3}}\right)  }%
\end{array}
\right) \\
\times_{M\otimes\mathcal{W}_{D(2)}}\\
\left(
\begin{array}
[c]{c}%
\underset{213}{\left(  M\otimes\mathcal{W}_{D^{3}}\right)  }\\
\times_{M\otimes\mathcal{W}_{D^{3}\{(1,3)\}}}\\
\underset{231}{\left(  M\otimes\mathcal{W}_{D^{3}}\right)  }%
\end{array}
\right)
\end{array}
& \times_{M\otimes\mathcal{W}_{D^{3}\oplus D^{3}}} &
\begin{array}
[c]{c}%
\left(
\begin{array}
[c]{c}%
\underset{213}{\left(  M\otimes\mathcal{W}_{D^{3}}\right)  }\\
\times_{M\otimes\mathcal{W}_{D^{3}\{(1,2)\}}}\\
\underset{123}{\left(  M\otimes\mathcal{W}_{D^{3}}\right)  }%
\end{array}
\right) \\
\times_{M\otimes\mathcal{W}_{D(2)}}\\
\left(
\begin{array}
[c]{c}%
\underset{321}{\left(  M\otimes\mathcal{W}_{D^{3}}\right)  }\\
\times_{M\otimes\mathcal{W}_{D^{3}\{(1,2)\}}}\\
\underset{312}{\left(  M\otimes\mathcal{W}_{D^{3}}\right)  }%
\end{array}
\right)
\end{array}
\end{array}
\right]
\]

\end{notation}

We can compute the above limit.

\begin{theorem}
\label{t3.14}The diagram
\[%
\begin{array}
[c]{ccccccc}
& \mathrm{id}_{M}\otimes\mathcal{W}_{h_{12}^{1}} &  & M\otimes\mathcal{W}%
_{E[1]} &  & \mathrm{id}_{M}\otimes\mathcal{W}_{h_{31}^{1}} & \\
&  & \swarrow & \uparrow & \searrow &  & \\
& M\otimes\mathcal{W}_{D^{3}\oplus D^{3}} &  & M\otimes\mathcal{W}_{G} &  &
M\otimes\mathcal{W}_{D^{3}\oplus D^{3}} & \\
\mathrm{id}_{M}\otimes\mathcal{W}_{h_{12}^{2}} & \uparrow & \swarrow &  &
\searrow & \uparrow & \mathrm{id}_{M}\otimes\mathcal{W}_{h_{31}^{3}}\\
& M\otimes\mathcal{W}_{E[2]} &  &  &  & M\otimes\mathcal{W}_{E[3]} & \\
&  & \searrow &  & \swarrow &  & \\
& \mathrm{id}_{M}\otimes\mathcal{W}_{h_{23}^{2}} &  & M\otimes\mathcal{W}%
_{D^{3}\oplus D^{3}} &  & \mathrm{id}_{M}\otimes\mathcal{W}_{h_{23}^{3}} &
\end{array}
\]
is a limit diagram with the three unnamed arrows being
\begin{align*}
\mathrm{id}_{M}\otimes\mathcal{W}_{k_{1}} &  :M\otimes\mathcal{W}%
_{G}\rightarrow M\otimes\mathcal{W}_{E[1]}\\
\mathrm{id}_{M}\otimes\mathcal{W}_{k_{2}} &  :M\otimes\mathcal{W}%
_{G}\rightarrow M\otimes\mathcal{W}_{E[2]}\\
\mathrm{id}_{M}\otimes\mathcal{W}_{k_{3}} &  :M\otimes\mathcal{W}%
_{G}\rightarrow M\otimes\mathcal{W}_{E[3]}%
\end{align*}
where the assumptive object $G$ is
\begin{align*}
&  D^{8}%
\{(2,4),(3,4),(1,5),(3,5),(1,6),(2,6),(4,5),(4,6),(5,6),(1,7),(2,7),(3,7),\\
&  (4,7),(5,7),(6,7),(1,8),(2,8),(3,8),(4,8),(5,8),(6,8),(7,8)\}\text{,}%
\end{align*}
the assumptive mapping $k_{1}:E[1]\rightarrow G$ is
\begin{align*}
(d_{1},d_{2},d_{3},d_{4},d_{5},d_{6},d_{7}) &  \in E[1]\mapsto\\
(d_{1},d_{2}+d_{4},d_{3}+d_{5},d_{6}-d_{2}d_{3}-d_{4}d_{5},-d_{1}d_{5}%
,d_{1}d_{4},d_{7}+d_{1}d_{2}d_{3},d_{1}d_{2}d_{3}) &  \in G\text{,}%
\end{align*}
the assumptive mapping $k_{2}:E[2]\rightarrow G$ is
\begin{align*}
(d_{1},d_{2},d_{3},d_{4},d_{5},d_{6},d_{7}) &  \in E[2]\mapsto\\
(d_{1}+d_{5},d_{2},d_{3}+d_{4},-d_{2}d_{3},d_{6}-d_{1}d_{3}-d_{4}d_{5}%
,d_{1}d_{2},d_{2}d_{4}d_{5},d_{7}) &  \in G\text{,}%
\end{align*}
the assumptive mapping $k_{3}:E[3]\rightarrow G$ is
\begin{align*}
(d_{1},d_{2},d_{3},d_{4},d_{5},d_{6},d_{7}) &  \in E[3]\mapsto\\
(d_{1}+d_{4},d_{2}+d_{5},d_{3},-d_{4}d_{5},-d_{1}d_{3},d_{6},-d_{7}%
,-d_{7}+d_{3}d_{4}d_{5}) &  \in G\text{,}%
\end{align*}
the assumptive mapping $h_{12}^{1}$ is
\[
\iota_{2}^{1}\oplus\iota_{3}^{1}\text{,}%
\]
the assumptive mapping $h_{12}^{2}$ is
\[
\iota_{4}^{2}\oplus\iota_{1}^{2}\text{,}%
\]
the assumptive mapping $h_{23}^{2}$ is
\[
\iota_{2}^{2}\oplus\iota_{3}^{2}\text{,}%
\]
the assumptive mapping $h_{23}^{3}$ is
\[
\iota_{4}^{3}\oplus\iota_{1}^{3}\text{,}%
\]
the assumptive mapping $h_{31}^{3}$ is
\[
\iota_{2}^{3}\oplus\iota_{3}^{3}\text{,}%
\]
and the assumptive mapping $h_{31}^{1}$ is
\[
\iota_{4}^{1}\oplus\iota_{1}^{2}\text{.}%
\]

\end{theorem}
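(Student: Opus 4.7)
The plan is to follow the exact pattern established by Lemmas \ref{t3.4} and \ref{t3.6}: reduce the theorem to a limit statement on Weil algebras, and then verify that limit by an explicit polynomial computation parallel to Lemma \ref{t3.4}.

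First, by the microlinearity of $M$ together with Weil exponentiability, the functor $M\otimes\mathcal{W}_{(-)}$ sends colimits of Weil algebras to limits in $\mathcal{K}$. It therefore suffices to prove that the corresponding diagram of Weil algebras, with $\mathcal{W}_{G}$ at the centre and arrows $\mathcal{W}_{k_{1}},\mathcal{W}_{k_{2}},\mathcal{W}_{k_{3}}$ radiating out to the three $\mathcal{W}_{E[i]}$ and then on to the three copies of $\mathcal{W}_{D^{3}\oplus D^{3}}$, is a limit diagram of Weil algebras; equivalently, $\mathcal{W}_{G}$ should be the pushout of $\mathcal{W}_{E[1]},\mathcal{W}_{E[2]},\mathcal{W}_{E[3]}$ over the three prescribed common quotients.

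Second, I would write down explicit monomial bases. Each $\mathcal{W}_{E[i]}$ is the quotient of $k[X_{1},\dots,X_{7}]$ by the ideal generated by all $X_{j}^{2}$ together with the monomials $X_{j}X_{l}$ corresponding to the forbidden binary sequences in the definition of $E[i]$, and likewise $\mathcal{W}_{G}$ is the quotient of $k[X_{1},\dots,X_{8}]$ by its analogous ideal. A generic element of $\mathcal{W}_{E[i]}$ is then a polynomial of the same shape as the $\gamma$'s in Lemma \ref{t3.4} but with a larger finite list of surviving monomials, and similarly for $\mathcal{W}_{D^{3}\oplus D^{3}}$ whose basis is prescribed by the $\oplus$ construction.

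Third, given a compatible cone $(\gamma^{1},\gamma^{2},\gamma^{3})$ with $\gamma^{i}\in\mathcal{W}_{E[i]}$, the six compatibility equations arising from the three copies of $\mathcal{W}_{D^{3}\oplus D^{3}}$ impose linear relations among the coefficients of the three $\gamma^{i}$. I would then use the explicit formulas for $k_{1},k_{2},k_{3}$ to read off, coefficient by coefficient, the unique polynomial $\gamma\in\mathcal{W}_{G}$ whose images $\mathcal{W}_{k_{i}}(\gamma)$ match the $\gamma^{i}$. Existence amounts to producing such a $\gamma$ from the free data, while uniqueness is automatic once one observes that the three $\mathcal{W}_{k_{i}}$ together separate the monomials of $\mathcal{W}_{G}$.

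The main obstacle is bookkeeping rather than conceptual difficulty. The objects $E[1],E[2],E[3]$ each live in $D^{7}$ cut out by fourteen forbidden binary sequences, while $G$ lives in $D^{8}$ cut out by twenty-two of them; and the $k_{i}$ intermingle sums, differences and triple products (for example, $k_{1}$ sends $(d_{1},\dots,d_{7})$ to an eight-tuple containing $d_{6}-d_{2}d_{3}-d_{4}d_{5}$ and $d_{7}+d_{1}d_{2}d_{3}$). The shifts and sign conventions in the formulas for $k_{1},k_{2},k_{3}$ are calibrated exactly so that the three images patch coherently along the $\mathcal{W}_{D^{3}\oplus D^{3}}$ overlaps, so the hard part will be to verify, monomial by monomial and separately in degrees one, two and three, that this arithmetic produces neither an overdetermined system nor a missing constraint. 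I would organize the verification as a finite table whose rows are the monomials of $\mathcal{W}_{G}$ and whose columns record their images under the three $\mathcal{W}_{k_{i}}$, reducing the proof to inspecting a completely mechanical finite list.
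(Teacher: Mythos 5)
Your proposal follows exactly the paper's route: the theorem is reduced, via microlinearity of $M$, to the corresponding limit diagram of Weil algebras (Lemma \ref{t3.15}), which is then verified by writing generic polynomials for elements of $\mathcal{W}_{E[1]}$, $\mathcal{W}_{E[2]}$, $\mathcal{W}_{E[3]}$ and $\mathcal{W}_{G}$, extracting the coefficient relations imposed by the three overlaps on $\mathcal{W}_{D^{3}\oplus D^{3}}$, and exhibiting the unique preimage under the $\mathcal{W}_{k_{i}}$. The only remaining work in your plan is actually carrying out the finite coefficient bookkeeping (the paper's equations (\ref{5.1})--(\ref{5.28})), which you have correctly identified as mechanical.
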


\begin{corollary}
We have
\begin{align*}
& \left[
\begin{array}
[c]{ccc}
&
\begin{array}
[c]{c}%
\left(
\begin{array}
[c]{c}%
\underset{321}{\left(  M\otimes\mathcal{W}_{D^{3}}\right)  }\\
\times_{M\otimes\mathcal{W}_{D^{3}\{(2,3)\}}}\\
\underset{231}{\left(  M\otimes\mathcal{W}_{D^{3}}\right)  }%
\end{array}
\right) \\
\times_{M\otimes\mathcal{W}_{D(2)}}\\
\left(
\begin{array}
[c]{c}%
\underset{132}{\left(  M\otimes\mathcal{W}_{D^{3}}\right)  }\\
\times_{M\otimes\mathcal{W}_{D^{3}\{(2,3)\}}}\\
\underset{123}{\left(  M\otimes\mathcal{W}_{D^{3}}\right)  }%
\end{array}
\right)
\end{array}
& \\%
\begin{array}
[c]{c}%
\times_{M\otimes\mathcal{W}_{D^{3}\oplus D^{3}}}\\
\,\\
\,
\end{array}
&  &
\begin{array}
[c]{c}%
\times_{M\otimes\mathcal{W}_{D^{3}\oplus D^{3}}}\\
\,\\
\,
\end{array}
\\%
\begin{array}
[c]{c}%
\left(
\begin{array}
[c]{c}%
\underset{132}{\left(  M\otimes\mathcal{W}_{D^{3}}\right)  }\\
\times_{M\otimes\mathcal{W}_{D^{3}\{(1,3)\}}}\\
\underset{312}{\left(  M\otimes\mathcal{W}_{D^{3}}\right)  }%
\end{array}
\right) \\
\times_{M\otimes\mathcal{W}_{D(2)}}\\
\left(
\begin{array}
[c]{c}%
\underset{213}{\left(  M\otimes\mathcal{W}_{D^{3}}\right)  }\\
\times_{M\otimes\mathcal{W}_{D^{3}\{(1,3)\}}}\\
\underset{231}{\left(  M\otimes\mathcal{W}_{D^{3}}\right)  }%
\end{array}
\right)
\end{array}
& \times_{M\otimes\mathcal{W}_{D^{3}\oplus D^{3}}} &
\begin{array}
[c]{c}%
\left(
\begin{array}
[c]{c}%
\underset{213}{\left(  M\otimes\mathcal{W}_{D^{3}}\right)  }\\
\times_{M\otimes\mathcal{W}_{D^{3}\{(1,2)\}}}\\
\underset{123}{\left(  M\otimes\mathcal{W}_{D^{3}}\right)  }%
\end{array}
\right) \\
\times_{M\otimes\mathcal{W}_{D(2)}}\\
\left(
\begin{array}
[c]{c}%
\underset{321}{\left(  M\otimes\mathcal{W}_{D^{3}}\right)  }\\
\times_{M\otimes\mathcal{W}_{D^{3}\{(1,2)\}}}\\
\underset{312}{\left(  M\otimes\mathcal{W}_{D^{3}}\right)  }%
\end{array}
\right)
\end{array}
\end{array}
\right] \\
& =M\otimes\mathcal{W}_{G}%
\end{align*}
with the diagrams
\[%
\begin{array}
[c]{ccc}%
\triangle & \rightarrow &
\begin{array}
[c]{c}%
\left(
\begin{array}
[c]{c}%
\underset{132}{\left(  M\otimes\mathcal{W}_{D^{3}}\right)  }\\
\times_{M\otimes\mathcal{W}_{D^{3}\{(1,3)\}}}\\
\underset{312}{\left(  M\otimes\mathcal{W}_{D^{3}}\right)  }%
\end{array}
\right) \\
\times_{M\otimes\mathcal{W}_{D(2)}}\\
\left(
\begin{array}
[c]{c}%
\underset{213}{\left(  M\otimes\mathcal{W}_{D^{3}}\right)  }\\
\times_{M\otimes\mathcal{W}_{D^{3}\{(1,3)\}}}\\
\underset{231}{\left(  M\otimes\mathcal{W}_{D^{3}}\right)  }%
\end{array}
\right)
\end{array}
\\
\parallel &  & \parallel\\
M\otimes\mathcal{W}_{G} & \rightarrow & M\otimes\mathcal{W}_{E[2]}\\
& \mathrm{id}_{M}\otimes\mathcal{W}_{k_{2}} &
\end{array}
\]
and
\[%
\begin{array}
[c]{ccc}%
\triangle & \rightarrow &
\begin{array}
[c]{c}%
\left(
\begin{array}
[c]{c}%
\underset{213}{\left(  M\otimes\mathcal{W}_{D^{3}}\right)  }\\
\times_{M\otimes\mathcal{W}_{D^{3}\{(1,2)\}}}\\
\underset{123}{\left(  M\otimes\mathcal{W}_{D^{3}}\right)  }%
\end{array}
\right) \\
\times_{M\otimes\mathcal{W}_{D(2)}}\\
\left(
\begin{array}
[c]{c}%
\underset{321}{\left(  M\otimes\mathcal{W}_{D^{3}}\right)  }\\
\times_{M\otimes\mathcal{W}_{D^{3}\{(1,2)\}}}\\
\underset{312}{\left(  M\otimes\mathcal{W}_{D^{3}}\right)  }%
\end{array}
\right)
\end{array}
\\
\parallel &  & \parallel\\
M\otimes\mathcal{W}_{G} & \rightarrow & M\otimes\mathcal{W}_{E[3]}\\
& \mathrm{id}_{M}\otimes\mathcal{W}_{k_{3}} &
\end{array}
\]
being commutative, where $\triangle$\ in the above diagram is
\[
\left[
\begin{array}
[c]{ccc}
&
\begin{array}
[c]{c}%
\left(
\begin{array}
[c]{c}%
\underset{321}{\left(  M\otimes\mathcal{W}_{D^{3}}\right)  }\\
\times_{M\otimes\mathcal{W}_{D^{3}\{(2,3)\}}}\\
\underset{231}{\left(  M\otimes\mathcal{W}_{D^{3}}\right)  }%
\end{array}
\right) \\
\times_{M\otimes\mathcal{W}_{D(2)}}\\
\left(
\begin{array}
[c]{c}%
\underset{132}{\left(  M\otimes\mathcal{W}_{D^{3}}\right)  }\\
\times_{M\otimes\mathcal{W}_{D^{3}\{(2,3)\}}}\\
\underset{123}{\left(  M\otimes\mathcal{W}_{D^{3}}\right)  }%
\end{array}
\right)
\end{array}
& \\%
\begin{array}
[c]{c}%
\times_{M\otimes\mathcal{W}_{D^{3}\oplus D^{3}}}\\
\,\\
\,
\end{array}
&  &
\begin{array}
[c]{c}%
\times_{M\otimes\mathcal{W}_{D^{3}\oplus D^{3}}}\\
\,\\
\,
\end{array}
\\%
\begin{array}
[c]{c}%
\left(
\begin{array}
[c]{c}%
\underset{132}{\left(  M\otimes\mathcal{W}_{D^{3}}\right)  }\\
\times_{M\otimes\mathcal{W}_{D^{3}\{(1,3)\}}}\\
\underset{312}{\left(  M\otimes\mathcal{W}_{D^{3}}\right)  }%
\end{array}
\right) \\
\times_{M\otimes\mathcal{W}_{D(2)}}\\
\left(
\begin{array}
[c]{c}%
\underset{213}{\left(  M\otimes\mathcal{W}_{D^{3}}\right)  }\\
\times_{M\otimes\mathcal{W}_{D^{3}\{(1,3)\}}}\\
\underset{231}{\left(  M\otimes\mathcal{W}_{D^{3}}\right)  }%
\end{array}
\right)
\end{array}
& \times_{M\otimes\mathcal{W}_{D^{3}\oplus D^{3}}} &
\begin{array}
[c]{c}%
\left(
\begin{array}
[c]{c}%
\underset{213}{\left(  M\otimes\mathcal{W}_{D^{3}}\right)  }\\
\times_{M\otimes\mathcal{W}_{D^{3}\{(1,2)\}}}\\
\underset{123}{\left(  M\otimes\mathcal{W}_{D^{3}}\right)  }%
\end{array}
\right) \\
\times_{M\otimes\mathcal{W}_{D(2)}}\\
\left(
\begin{array}
[c]{c}%
\underset{321}{\left(  M\otimes\mathcal{W}_{D^{3}}\right)  }\\
\times_{M\otimes\mathcal{W}_{D^{3}\{(1,2)\}}}\\
\underset{312}{\left(  M\otimes\mathcal{W}_{D^{3}}\right)  }%
\end{array}
\right)
\end{array}
\end{array}
\right]
\]
and unnamed arrows are the canonical projections.
\end{corollary}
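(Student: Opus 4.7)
The plan is to imitate the template of Theorem \ref{t3.3}/Lemma \ref{t3.4} and Theorem \ref{t3.5}/Lemma \ref{t3.6}. Since $M$ is microlinear, it suffices to prove that the underlying hexagonal diagram of Weil algebras---with every $M\otimes\mathcal{W}_{(-)}$ replaced by $\mathcal{W}_{(-)}$ and every $\mathrm{id}_{M}\otimes\mathcal{W}_{f}$ replaced by $\mathcal{W}_{f}$---is a limit in the category of Weil algebras; applying $M\otimes(-)$ afterwards then yields the theorem.

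For the Weil-algebra limit itself I would proceed as in Lemma \ref{t3.4}. The first step is to write a generic element of each of $\mathcal{W}_{E[1]}$, $\mathcal{W}_{E[2]}$, $\mathcal{W}_{E[3]}$, and the three copies of $\mathcal{W}_{D^{3}\oplus D^{3}}$ as a polynomial in the corresponding variables, retaining only those monomials that survive the relations encoded by the defining $\mathfrak{p}$. Inspection shows that each $\mathcal{W}_{E[i]}$ has, in addition to its surviving linear and quadratic monomials, a small handful of surviving cubic monomials $X_{j}X_{k}X_{\ell}$; these higher-order coefficients are precisely what the maps $\mathcal{W}_{k_{i}}$ are designed to capture. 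The second step is to compute, for each of the six outer maps $\mathcal{W}_{h_{jk}^{i}}=\mathcal{W}_{\iota_{p}^{i}\oplus\iota_{q}^{i}}$, the induced polynomial in the relevant $\mathcal{W}_{D^{3}\oplus D^{3}}$; demanding that the three pairs of polynomials arriving at the three outer apices agree yields a system of linear equations on the coefficients of the $\gamma_{i}\in\mathcal{W}_{E[i]}$. The final step is to solve this system and verify that its solution space is in bijection with $\mathcal{W}_{G}$ via the three prescribed $\mathcal{W}_{k_{i}}$, thereby exhibiting the universal property of the limit.

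The main obstacle is pure bookkeeping. The apex $G$ lives in $D^{8}$ and is cut out by twenty-two forbidden products, while each $E[i]$ lives in $D^{7}$ with fourteen relations; tracking the correct basis monomials and their images under all six outer restrictions is extensive but routine. The only nontrivial structural insight---which reveals itself only once the calculation is underway---is that the otherwise mysterious corrections in the formulas for $k_{i}$, namely the combinations $d_{6}-d_{2}d_{3}-d_{4}d_{5}$ and the cubic terms $\pm d_{1}d_{2}d_{3}$, $d_{2}d_{4}d_{5}$, $d_{3}d_{4}d_{5}$, are exactly what is forced by reconciling the three compatibility conditions at the three $\mathcal{W}_{D^{3}\oplus D^{3}}$ apices simultaneously. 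No conceptual novelty beyond Lemma \ref{t3.4} is required; the whole argument amounts to an extended but entirely mechanical linear-algebraic calculation on polynomial coefficients.
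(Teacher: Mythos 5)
Your proposal is correct and follows essentially the same route as the paper: the corollary is obtained from Theorem \ref{t3.14}, which is reduced by the microlinearity of $M$ to Lemma \ref{t3.15}, whose proof is exactly the coefficient bookkeeping you describe (writing generic polynomials in $\mathcal{W}_{E[1]}$, $\mathcal{W}_{E[2]}$, $\mathcal{W}_{E[3]}$, equating their images in the three copies of $\mathcal{W}_{D^{3}\oplus D^{3}}$, and showing the resulting linear system is solved precisely by the images of a unique $\gamma\in\mathcal{W}_{G}$ under the $\mathcal{W}_{k_{i}}$). Your observation that the cubic correction terms in the $k_{i}$ are forced by reconciling the three compatibility conditions simultaneously matches the role of conditions (\ref{5.24})--(\ref{5.28}) in the paper's calculation.
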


The proof of the above theorem follows directly from the following lemma.

\begin{lemma}
\label{t3.15}The following diagram is a limit diagram of Weil algebras:
\[%
\begin{array}
[c]{ccccccc}
& \mathcal{W}_{h_{12}^{1}} &  & \mathcal{W}_{E[1]} &  & \mathcal{W}%
_{h_{31}^{1}} & \\
&  & \swarrow & \uparrow & \searrow &  & \\
& \mathcal{W}_{D^{3}\oplus D^{3}} &  & \mathcal{W}_{G} &  & \mathcal{W}%
_{D^{3}\oplus D^{3}} & \\
\mathcal{W}_{h_{12}^{2}} & \uparrow & \swarrow &  & \searrow & \uparrow &
\mathcal{W}_{h_{31}^{3}}\\
& \mathcal{W}_{E[2]} &  &  &  & \mathcal{W}_{E[3]} & \\
&  & \searrow &  & \swarrow &  & \\
& \mathcal{W}_{h_{23}^{2}} &  & \mathcal{W}_{D^{3}\oplus D^{3}} &  &
\mathcal{W}_{h_{23}^{3}} &
\end{array}
\]

\end{lemma}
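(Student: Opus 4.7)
The plan is to mirror the direct coefficient-level calculation used for Lemma \ref{t3.4} and Lemma \ref{t3.6}. For each of the infinitesimal objects $E[1]$, $E[2]$, $E[3]$ and $G$, the defining ideal in the polynomial ring in the ambient variables is explicit, so the corresponding Weil algebra is identified with the quotient, with a canonical basis of surviving monomials. First I would write down the generic element of each $\mathcal{W}_{E[i]}$ as a polynomial in $X_{1},\ldots,X_{7}$ containing exactly the monomials admitted by the defining relations, and likewise write a generic $\gamma \in \mathcal{W}_{G}$ as a polynomial in $X_{1},\ldots,X_{8}$ with the surviving monomials for $G$.

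Next I would translate the limit condition into coefficient equations. A compatible cone consists of elements $\gamma^{[i]} \in \mathcal{W}_{E[i]}$ whose pairwise images under $\mathcal{W}_{h_{ab}^{i}}$ agree in each of the three copies of $\mathcal{W}_{D^{3}\oplus D^{3}}$. Unpacking the definitions $h_{12}^{1}=\iota_{2}^{1}\oplus\iota_{3}^{1}$, $h_{12}^{2}=\iota_{4}^{2}\oplus\iota_{1}^{2}$ and their analogues in terms of the explicit formulas for $\iota_{j}^{i}$, these matching conditions cut the freedom in the triple down to: common constant and linear coefficients in the shared variables, matching quadratic coefficients on overlapping pairs, and a relation among the top-degree coefficients (those carried by $\iota_{4}^{i}$) coming from the triple-product terms $d_{1}d_{2}d_{3}$.

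Then I would compute $\mathcal{W}_{k_{i}}(\gamma)$ by substituting the coordinate formulas for $k_{1}, k_{2}, k_{3}$ stated in Theorem \ref{t3.14}, expanding, and reducing modulo the defining ideal of $E[i]$. Setting each such expansion equal to $\gamma^{[i]}$ produces a linear system in the coefficients of $\gamma$, which I expect to be uniquely solvable exactly on the subspace of compatible triples isolated in the previous step. Exhibiting the solution coefficient-by-coefficient in closed form, in the spirit of the formula $a+a_{1}X_{1}+a_{2}X_{2}+a_{12}X_{1}X_{2}+(b_{12}-a_{12})X_{3}+(c_{12}-a_{12})X_{4}$ of Lemma \ref{t3.4}, completes the argument.

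The main obstacle will be bookkeeping: $G \subseteq D^{8}$ is defined by a long list of quadratic vanishing relations, and the maps $k_{i}$ carry quadratic and even cubic nonlinearities such as $d_{1}d_{2}d_{3}$ and $d_{3}d_{4}d_{5}$. Two subtleties must be handled carefully. First, one must check that the nonlinear contributions in $k_{i}(\gamma)$ do not produce monomials outside the basis of $\mathcal{W}_{E[i]}$; this is where the specific choices of which products are killed in each $E[i]$ and in $G$ work in concert. Second, the seventh and eighth coefficients of $\gamma$ must be pinned down consistently from all three matching equations simultaneously: the asymmetric terms $d_{7}+d_{1}d_{2}d_{3}$ in $k_{1}$, $d_{7}$ in $k_{2}$, and $-d_{7}$ together with $-d_{7}+d_{3}d_{4}d_{5}$ in $k_{3}$ are precisely arranged so that the three cubic coefficients arising from the $\iota_{4}^{i}$ on the $\gamma^{[i]}$ side agree with a single pair of values in $\gamma$, and verifying this compatibility is the one computation that cannot be shortened.
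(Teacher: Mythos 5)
Your plan is exactly the paper's proof: write generic elements of $\mathcal{W}_{E[1]}$, $\mathcal{W}_{E[2]}$, $\mathcal{W}_{E[3]}$, $\mathcal{W}_{G}$ as polynomials in their surviving monomials, translate the three matching conditions under $\mathcal{W}_{h_{ab}^{i}}$ into linear relations on coefficients, and exhibit the unique preimage $\gamma\in\mathcal{W}_{G}$ coefficient by coefficient, in the style of Lemma \ref{t3.4}. You have also correctly located the one genuinely delicate point, namely that the cubic/top-degree matching equations are not independent and close up only because the three seventh-coordinate coefficients sum to zero (the paper's condition $a_{7}^{1}+a_{7}^{2}+a_{7}^{3}=0$ and the derived identity $a_{345}^{3}=a_{123}^{1}$), so the proposal matches the paper's argument.
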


\begin{proof}
Let $\gamma_{1}\in\mathcal{W}_{E[1]}$, $\gamma_{2}\in\mathcal{W}_{E[2]}$,
$\gamma_{3}\in\mathcal{W}_{E[3]}$ and $\gamma\in\mathcal{W}_{G}$ so that they
are polynomials with coefficients in $k$\ of the following forms:
\begin{align*}
&  \gamma_{1}(X_{1},X_{2},X_{3},X_{4},X_{5},X_{6},X_{7})\\
&  =a^{1}+a_{1}^{1}X_{1}+a_{2}^{1}X_{2}+a_{3}^{1}X_{3}+a_{4}^{1}X_{4}%
+a_{5}^{1}X_{5}+a_{6}^{1}X_{6}+a_{7}^{1}X_{7}+a_{12}^{1}X_{1}X_{2}+a_{13}%
^{1}X_{1}X_{3}+\\
&  a_{14}^{1}X_{1}X_{4}+a_{15}^{1}X_{1}X_{5}+a_{16}^{1}X_{1}X_{6}+a_{23}%
^{1}X_{2}X_{3}+a_{45}^{1}X_{4}X_{5}+a_{123}^{1}X_{1}X_{2}X_{3}+a_{145}%
^{1}X_{1}X_{4}X_{5}%
\end{align*}
\begin{align*}
&  \gamma_{2}(X_{1},X_{2},X_{3},X_{4},X_{5},X_{6},X_{7})\\
&  =a^{2}+a_{1}^{2}X_{1}+a_{2}^{2}X_{2}+a_{3}^{2}X_{3}+a_{4}^{2}X_{4}%
+a_{5}^{2}X_{5}+a_{6}^{2}X_{6}+a_{7}^{2}X_{7}+a_{12}^{2}X_{1}X_{2}+a_{13}%
^{2}X_{1}X_{3}+\\
&  a_{23}^{2}X_{2}X_{3}+a_{24}^{2}X_{2}X_{4}+a_{25}^{2}X_{2}X_{5}+a_{26}%
^{2}X_{2}X_{6}+a_{45}^{2}X_{4}X_{5}+a_{123}^{2}X_{1}X_{2}X_{3}+a_{245}%
^{2}X_{2}X_{4}X_{5}%
\end{align*}
\begin{align*}
&  \gamma_{3}(X_{1},X_{2},X_{3},X_{4},X_{5},X_{6},X_{7})\\
&  =a^{3}+a_{1}^{3}X_{1}+a_{2}^{3}X_{2}+a_{3}^{3}X_{3}+a_{4}^{3}X_{4}%
+a_{5}^{3}X_{5}+a_{6}^{3}X_{6}+a_{7}^{3}X_{7}+a_{12}^{3}X_{1}X_{2}+a_{13}%
^{3}X_{1}X_{3}+\\
&  a_{23}^{3}X_{2}X_{3}+a_{34}^{3}X_{3}X_{4}+a_{35}^{3}X_{3}X_{5}+a_{36}%
^{3}X_{3}X_{6}+a_{45}^{3}X_{4}X_{5}+a_{123}^{3}X_{1}X_{2}X_{3}+a_{345}%
^{3}X_{3}X_{4}X_{5}%
\end{align*}
\begin{align*}
&  \gamma(X_{1},X_{2},X_{3},X_{4},X_{5},X_{6},X_{7},X_{8})\\
&  =b+b_{1}X_{1}+b_{2}X_{2}+b_{3}X_{3}+b_{4}X_{4}+b_{5}X_{5}+b_{6}X_{6}%
+b_{7}X_{7}+b_{8}X_{8}+b_{12}X_{1}X_{2}\\
&  +b_{13}X_{1}X_{3}+b_{14}X_{1}X_{4}+b_{23}X_{2}X_{3}+b_{25}X_{2}X_{5}%
+b_{36}X_{3}X_{6}%
\end{align*}
It is easy to see that
\begin{align*}
&  \mathcal{W}_{h_{12}^{1}}(\gamma_{1})(X_{1},X_{2},X_{3},X_{4},X_{5},X_{6})\\
&  =a^{1}+a_{1}^{1}X_{1}+a_{2}^{1}X_{2}+a_{3}^{1}X_{3}+a_{6}^{1}X_{2}%
X_{3}+a_{12}^{1}X_{1}X_{2}+a_{13}^{1}X_{1}X_{3}+a_{16}^{1}X_{1}X_{2}%
X_{3}+a_{23}^{1}X_{2}X_{3}\\
&  +a_{123}^{1}X_{1}X_{2}X_{3}+a_{1}^{1}X_{4}+a_{4}^{1}X_{5}+a_{5}^{1}%
X_{6}+a_{14}^{1}X_{4}X_{5}+a_{15}^{1}X_{4}X_{6}+a_{45}^{1}X_{5}X_{6}\\
&  +a_{145}^{1}X_{4}X_{5}X_{6}\\
&  =a^{1}+a_{1}^{1}X_{1}+a_{2}^{1}X_{2}+a_{3}^{1}X_{3}+a_{12}^{1}X_{1}%
X_{2}+a_{13}^{1}X_{1}X_{3}+(a_{6}^{1}+a_{23}^{1})X_{2}X_{3}\\
&  +(a_{16}^{1}+a_{123}^{1})X_{1}X_{2}X_{3}+a_{1}^{1}X_{4}+a_{4}^{1}%
X_{5}+a_{5}^{1}X_{6}+a_{14}^{1}X_{4}X_{5}+a_{15}^{1}X_{4}X_{6}+a_{45}^{1}%
X_{5}X_{6}\\
&  +a_{145}^{1}X_{4}X_{5}X_{6}%
\end{align*}
\begin{align*}
&  \mathcal{W}_{h_{12}^{2}}(\gamma_{2})(X_{1},X_{2},X_{3},X_{4},X_{5},X_{6})\\
&  =a^{2}+a_{2}^{2}X_{2}+a_{4}^{2}X_{3}+a_{5}^{2}X_{1}+a_{6}^{2}X_{1}%
X_{3}+a_{7}^{2}X_{1}X_{2}X_{3}+a_{24}^{2}X_{2}X_{3}+a_{25}^{2}X_{1}X_{2}\\
&  +a_{26}^{2}X_{1}X_{2}X_{3}+a_{45}^{2}X_{1}X_{3}+a_{245}^{2}X_{1}X_{2}%
X_{3}+a_{1}^{2}X_{4}+a_{2}^{2}X_{5}+a_{3}^{2}X_{6}+a_{12}^{2}X_{4}X_{5}\\
&  +a_{13}^{2}X_{4}X_{6}+a_{23}^{2}X_{5}X_{6}+a_{123}^{2}X_{4}X_{5}X_{6}\\
&  =a^{2}+a_{5}^{2}X_{1}+a_{2}^{2}X_{2}+a_{4}^{2}X_{3}+a_{25}^{2}X_{1}%
X_{2}+(a_{6}^{2}+a_{45}^{2})X_{1}X_{3}+a_{24}^{2}X_{2}X_{3}+\\
&  (a_{7}^{2}+a_{26}^{2}+a_{245}^{2})X_{1}X_{2}X_{3}+a_{1}^{2}X_{4}+a_{2}%
^{2}X_{5}+a_{3}^{2}X_{6}+a_{12}^{2}X_{4}X_{5}+a_{13}^{2}X_{4}X_{6}+a_{23}%
^{2}X_{5}X_{6}\\
&  +a_{123}^{2}X_{4}X_{5}X_{6}%
\end{align*}
Therefore the condition that $\mathcal{W}_{h_{12}^{1}}(\gamma_{1}%
)=\mathcal{W}_{h_{12}^{2}}(\gamma_{2})$ is equivalent to the following
conditions as a whole:
\begin{align}
a^{1} &  =a^{2}\label{5.1}\\
a_{1}^{1} &  =a_{5}^{2},\;a_{2}^{1}=a_{2}^{2},\;a_{3}^{1}=a_{4}^{2}%
,\;a_{1}^{1}=a_{1}^{2},\;a_{4}^{1}=a_{2}^{2},\;a_{5}^{1}=a_{3}^{2}%
\label{5.2}\\
a_{12}^{1} &  =a_{25}^{2},\;a_{13}^{1}=a_{6}^{2}+a_{45}^{2},\;a_{6}^{1}%
+a_{23}^{1}=a_{24}^{2},\;a_{14}^{1}=a_{12}^{2},\;a_{15}^{1}=a_{13}%
^{2},\;\nonumber\\
a_{45}^{1} &  =a_{23}^{2}\label{5.3}\\
a_{16}^{1}+a_{123}^{1} &  =a_{7}^{2}+a_{26}^{2}+a_{245}^{2},\;a_{145}%
^{1}=a_{123}^{2}\label{5.4}%
\end{align}
By the same token, the condition that $\mathcal{W}_{h_{23}^{2}}(\gamma
_{2})=\mathcal{W}_{h_{23}^{3}}(\gamma_{3})$ is equivalent to the following
conditions as a whole:
\begin{align}
a^{2} &  =a^{3}\label{5.5}\\
a_{2}^{2} &  =a_{5}^{3},\;a_{3}^{2}=a_{3}^{3},\;a_{1}^{2}=a_{4}^{3}%
,\;a_{2}^{2}=a_{2}^{3},\;a_{4}^{2}=a_{3}^{3},\;a_{5}^{2}=a_{1}^{3}%
\label{5.6}\\
a_{23}^{2} &  =a_{35}^{3},\;a_{12}^{2}=a_{6}^{3}+a_{45}^{3},\;a_{6}^{2}%
+a_{13}^{2}=a_{34}^{3},\;a_{24}^{2}=a_{23}^{3},\;a_{25}^{2}=a_{12}%
^{3},\;\nonumber\\
a_{45}^{2} &  =a_{13}^{3}\label{5.7}\\
a_{26}^{2}+a_{123}^{2} &  =a_{7}^{3}+a_{36}^{3}+a_{345}^{3},\;a_{245}%
^{2}=a_{123}^{3}\label{5.8}%
\end{align}
By the same token again, the condition that $\mathcal{W}_{h_{31}^{3}}%
(\gamma_{3})=\mathcal{W}_{h_{31}^{1}}(\gamma_{1})$ is equivalent to the
following conditions as a whole:
\begin{align}
a^{3} &  =a^{1}\label{5.9}\\
a_{3}^{3} &  =a_{5}^{1},\;a_{1}^{3}=a_{1}^{1},\;a_{2}^{3}=a_{4}^{1}%
,\;a_{3}^{3}=a_{3}^{1},\;a_{4}^{3}=a_{1}^{1},\;a_{5}^{3}=a_{2}^{1}%
\label{5.10}\\
a_{13}^{3} &  =a_{15}^{1},\;a_{23}^{3}=a_{6}^{1}+a_{45}^{1},\;a_{6}^{3}%
+a_{12}^{3}=a_{14}^{1},\;a_{34}^{3}=a_{13}^{1},\;a_{35}^{3}=a_{23}%
^{1},\;\nonumber\\
a_{45}^{3} &  =a_{12}^{1}\label{5.11}\\
a_{36}^{3}+a_{123}^{3} &  =a_{7}^{1}+a_{16}^{1}+a_{145}^{1},\;a_{345}%
^{3}=a_{123}^{1}\label{5.12}%
\end{align}
The three conditions (\ref{5.1}), (\ref{5.5}) and (\ref{5.9}) can be combined
into
\begin{equation}
a^{1}=a^{2}=a^{3}\label{5.13}%
\end{equation}
The three conditions (\ref{5.2}), (\ref{5.6}) and (\ref{5.10}) are to be
superseded by the following three conditions as a whole:
\begin{align}
a_{1}^{1} &  =a_{1}^{2}=a_{1}^{3}=a_{5}^{2}=a_{4}^{3}\label{5.14}\\
a_{2}^{1} &  =a_{2}^{2}=a_{2}^{3}=a_{4}^{1}=a_{5}^{3}\label{5.15}\\
a_{3}^{1} &  =a_{3}^{2}=a_{3}^{3}=a_{5}^{1}=a_{4}^{2}\label{5.16}%
\end{align}
The three conditions (\ref{5.3}), (\ref{5.7}) and (\ref{5.11}) are equivalent
to the following six conditions as a whole:
\begin{align}
a_{12}^{1} &  =a_{12}^{2}=a_{12}^{3}\label{5.17}\\
a_{13}^{1} &  =a_{13}^{2}=a_{13}^{3}\label{5.18}\\
a_{23}^{1} &  =a_{23}^{2}=a_{23}^{3}\label{5.19}\\
a_{14}^{1} &  =a_{12}^{1}+a_{6}^{3},\;a_{15}^{1}=a_{13}^{1}-a_{6}^{2}%
,\;a_{45}^{1}=a_{23}^{1}\label{5.20}\\
a_{24}^{2} &  =a_{23}^{2}+a_{6}^{1},\;a_{25}^{2}=a_{12}^{2}-a_{6}^{3}%
,\;a_{45}^{2}=a_{13}^{2}\label{5.21}\\
a_{34}^{3} &  =a_{13}^{3}+a_{6}^{2},\;a_{35}^{3}=a_{23}^{3}-a_{6}^{1}%
,\;a_{45}^{3}=a_{12}^{3}\label{5.22}%
\end{align}
The conditions (\ref{5.4}), (\ref{5.8}) and (\ref{5.12}) imply that
\begin{align}
&  a_{7}^{1}+a_{7}^{2}+a_{7}^{3}\nonumber\\
&  =(a_{36}^{3}+a_{123}^{3}-a_{16}^{1}-a_{145}^{1})+(a_{16}^{1}+a_{123}%
^{1}-a_{26}^{2}-a_{245}^{2})+(a_{26}^{2}+a_{123}^{2}-a_{36}^{3}-a_{345}%
^{3})\nonumber\\
&  =(a_{36}^{3}+a_{123}^{3}-a_{16}^{1}-a_{123}^{2})+(a_{16}^{1}+a_{123}%
^{1}-a_{26}^{2}-a_{123}^{3})+(a_{26}^{2}+a_{123}^{2}-a_{36}^{3}-a_{123}%
^{1})\nonumber\\
&  =0\label{5.23}%
\end{align}
Therefore the three conditions (\ref{5.4}), (\ref{5.8}) and (\ref{5.12}) are
to be replaced by the following five conditions as a whole:
\begin{align}
a_{145}^{1}-a_{123}^{1} &  =a_{7}^{3}+a_{36}^{3}-a_{26}^{2}\label{5.24}\\
a_{245}^{2}-a_{123}^{2} &  =a_{7}^{1}+a_{16}^{1}-a_{36}^{3}\label{5.25}\\
a_{345}^{3}-a_{123}^{3} &  =a_{7}^{2}+a_{26}^{2}-a_{16}^{1}\label{5.26}\\
a_{145}^{1} &  =a_{123}^{2},\;a_{245}^{2}=a_{123}^{3}\label{5.27}\\
a_{7}^{1}+a_{7}^{2}+a_{7}^{3} &  =0\label{5.28}%
\end{align}
Indeed, the condition that $a_{345}^{3}=a_{123}^{1}$ is derivable from the
above five conditions, as is to be demonstrated in the following:
\begin{align*}
&  a_{345}^{3}\\
&  =a_{123}^{3}+a_{7}^{2}+a_{26}^{2}-a_{16}^{1}\text{ \ \ [(\ref{5.26})]}\\
&  =a_{245}^{2}+a_{7}^{2}+a_{26}^{2}-a_{16}^{1}\text{ \ \ [(\ref{5.27})]}\\
&  =a_{123}^{2}+a_{7}^{1}-a_{36}^{3}+a_{7}^{2}+a_{26}^{2}\text{
\ \ [(\ref{5.25})]}\\
&  =a_{145}^{1}+a_{7}^{1}-a_{36}^{3}+a_{7}^{2}+a_{26}^{2}\text{
\ \ [(\ref{5.27})]}\\
&  =a_{123}^{1}+a_{7}^{1}+a_{7}^{2}+a_{7}^{3}\text{ \ \ [(\ref{5.24})]}\\
&  =a_{123}^{1}\text{ \ \ [(\ref{5.28})]}%
\end{align*}
Now it is not difficult to see that $\mathcal{W}_{h_{12}^{1}}(\gamma
_{1})=\mathcal{W}_{h_{12}^{2}}(\gamma_{2})$, $\mathcal{W}_{h_{23}^{2}}%
(\gamma_{2})=\mathcal{W}_{h_{23}^{3}}(\gamma_{3})$ and $\mathcal{W}%
_{h_{31}^{3}}(\gamma_{3})=\mathcal{W}_{h_{31}^{1}}(\gamma_{1})$ exactly when
there exists $\gamma\in\mathcal{W}_{G}$ with $\gamma_{i}=\mathcal{W}_{k_{i}%
}(\gamma)$ ($i=1,2,3$), in which $\gamma$ should uniquely be of the following
form:
\begin{align*}
&  \gamma(X_{1},X_{2},X_{3},X_{4},X_{5},X_{6},X_{7},X_{8})\\
&  =a^{1}+a_{1}^{1}X_{1}+a_{2}^{1}X_{2}+a_{3}^{1}X_{3}+a_{6}^{1}X_{4}%
+a_{6}^{2}X_{5}+a_{6}^{3}X_{6}+a_{7}^{1}X_{7}+a_{7}^{2}X_{8}+a_{12}^{1}%
X_{1}X_{2}\\
&  +a_{13}^{1}X_{1}X_{3}+a_{16}^{1}X_{1}X_{4}+(a_{23}^{2}+a_{6}^{1})X_{2}%
X_{3}+a_{26}^{2}X_{2}X_{5}+a_{36}^{3}X_{3}X_{6}%
\end{align*}
This completes the proof of the theorem.
\end{proof}

\begin{notation}
We will introduce three notations.

\begin{enumerate}
\item We will write
\begin{align*}
& \zeta^{(\ast_{123}\overset{\cdot}{\underset{1}{-}}\ast_{132})\overset{\cdot
}{-}(\ast_{231}\overset{\cdot}{\underset{1}{-}}\ast_{321})}:\\
& :\left[
\begin{array}
[c]{ccc}
&
\begin{array}
[c]{c}%
\left(
\begin{array}
[c]{c}%
\underset{321}{\left(  M\otimes\mathcal{W}_{D^{3}}\right)  }\\
\times_{M\otimes\mathcal{W}_{D^{3}\{(2,3)\}}}\\
\underset{231}{\left(  M\otimes\mathcal{W}_{D^{3}}\right)  }%
\end{array}
\right) \\
\times_{M\otimes\mathcal{W}_{D(2)}}\\
\left(
\begin{array}
[c]{c}%
\underset{132}{\left(  M\otimes\mathcal{W}_{D^{3}}\right)  }\\
\times_{M\otimes\mathcal{W}_{D^{3}\{(2,3)\}}}\\
\underset{123}{\left(  M\otimes\mathcal{W}_{D^{3}}\right)  }%
\end{array}
\right)
\end{array}
& \\%
\begin{array}
[c]{c}%
\times_{M\otimes\mathcal{W}_{D^{3}\oplus D^{3}}}\\
\,\\
\,
\end{array}
&  &
\begin{array}
[c]{c}%
\times_{M\otimes\mathcal{W}_{D^{3}\oplus D^{3}}}\\
\,\\
\,
\end{array}
\\%
\begin{array}
[c]{c}%
\left(
\begin{array}
[c]{c}%
\underset{132}{\left(  M\otimes\mathcal{W}_{D^{3}}\right)  }\\
\times_{M\otimes\mathcal{W}_{D^{3}\{(1,3)\}}}\\
\underset{312}{\left(  M\otimes\mathcal{W}_{D^{3}}\right)  }%
\end{array}
\right) \\
\times_{M\otimes\mathcal{W}_{D(2)}}\\
\left(
\begin{array}
[c]{c}%
\underset{213}{\left(  M\otimes\mathcal{W}_{D^{3}}\right)  }\\
\times_{M\otimes\mathcal{W}_{D^{3}\{(1,3)\}}}\\
\underset{231}{\left(  M\otimes\mathcal{W}_{D^{3}}\right)  }%
\end{array}
\right)
\end{array}
& \times_{M\otimes\mathcal{W}_{D^{3}\oplus D^{3}}} &
\begin{array}
[c]{c}%
\left(
\begin{array}
[c]{c}%
\underset{213}{\left(  M\otimes\mathcal{W}_{D^{3}}\right)  }\\
\times_{M\otimes\mathcal{W}_{D^{3}\{(1,2)\}}}\\
\underset{123}{\left(  M\otimes\mathcal{W}_{D^{3}}\right)  }%
\end{array}
\right) \\
\times_{M\otimes\mathcal{W}_{D(2)}}\\
\left(
\begin{array}
[c]{c}%
\underset{321}{\left(  M\otimes\mathcal{W}_{D^{3}}\right)  }\\
\times_{M\otimes\mathcal{W}_{D^{3}\{(1,2)\}}}\\
\underset{312}{\left(  M\otimes\mathcal{W}_{D^{3}}\right)  }%
\end{array}
\right)
\end{array}
\end{array}
\right] \\
& \rightarrow M\otimes\mathcal{W}_{D}%
\end{align*}
for the composition of morphisms
\begin{align*}
& \pi_{\left(  \left(  \underset{321}{\left(  M\otimes\mathcal{W}_{D^{3}%
}\right)  }\times_{M\otimes\mathcal{W}_{D^{3}\{(2,3)\}}}\underset{231}{\left(
M\otimes\mathcal{W}_{D^{3}}\right)  }\right)  \times_{M\otimes\mathcal{W}%
_{D(2)}}\left(  \underset{132}{\left(  M\otimes\mathcal{W}_{D^{3}}\right)
}\times_{M\otimes\mathcal{W}_{D^{3}\{(2,3)\}}}\underset{123}{\left(
M\otimes\mathcal{W}_{D^{3}}\right)  }\right)  \right)  }^{\triangle}:\\
& :\left[
\begin{array}
[c]{ccc}
&
\begin{array}
[c]{c}%
\left(
\begin{array}
[c]{c}%
\underset{321}{\left(  M\otimes\mathcal{W}_{D^{3}}\right)  }\\
\times_{M\otimes\mathcal{W}_{D^{3}\{(2,3)\}}}\\
\underset{231}{\left(  M\otimes\mathcal{W}_{D^{3}}\right)  }%
\end{array}
\right) \\
\times_{M\otimes\mathcal{W}_{D(2)}}\\
\left(
\begin{array}
[c]{c}%
\underset{132}{\left(  M\otimes\mathcal{W}_{D^{3}}\right)  }\\
\times_{M\otimes\mathcal{W}_{D^{3}\{(2,3)\}}}\\
\underset{123}{\left(  M\otimes\mathcal{W}_{D^{3}}\right)  }%
\end{array}
\right)
\end{array}
& \\%
\begin{array}
[c]{c}%
\times_{M\otimes\mathcal{W}_{D^{3}\oplus D^{3}}}\\
\,\\
\,
\end{array}
&  &
\begin{array}
[c]{c}%
\times_{M\otimes\mathcal{W}_{D^{3}\oplus D^{3}}}\\
\,\\
\,
\end{array}
\\%
\begin{array}
[c]{c}%
\left(
\begin{array}
[c]{c}%
\underset{132}{\left(  M\otimes\mathcal{W}_{D^{3}}\right)  }\\
\times_{M\otimes\mathcal{W}_{D^{3}\{(1,3)\}}}\\
\underset{312}{\left(  M\otimes\mathcal{W}_{D^{3}}\right)  }%
\end{array}
\right) \\
\times_{M\otimes\mathcal{W}_{D(2)}}\\
\left(
\begin{array}
[c]{c}%
\underset{213}{\left(  M\otimes\mathcal{W}_{D^{3}}\right)  }\\
\times_{M\otimes\mathcal{W}_{D^{3}\{(1,3)\}}}\\
\underset{231}{\left(  M\otimes\mathcal{W}_{D^{3}}\right)  }%
\end{array}
\right)
\end{array}
& \times_{M\otimes\mathcal{W}_{D^{3}\oplus D^{3}}} &
\begin{array}
[c]{c}%
\left(
\begin{array}
[c]{c}%
\underset{213}{\left(  M\otimes\mathcal{W}_{D^{3}}\right)  }\\
\times_{M\otimes\mathcal{W}_{D^{3}\{(1,2)\}}}\\
\underset{123}{\left(  M\otimes\mathcal{W}_{D^{3}}\right)  }%
\end{array}
\right) \\
\times_{M\otimes\mathcal{W}_{D(2)}}\\
\left(
\begin{array}
[c]{c}%
\underset{321}{\left(  M\otimes\mathcal{W}_{D^{3}}\right)  }\\
\times_{M\otimes\mathcal{W}_{D^{3}\{(1,2)\}}}\\
\underset{312}{\left(  M\otimes\mathcal{W}_{D^{3}}\right)  }%
\end{array}
\right)
\end{array}
\end{array}
\right] \\
& \rightarrow\left(
\begin{array}
[c]{c}%
\underset{321}{\left(  M\otimes\mathcal{W}_{D^{3}}\right)  }\\
\times_{M\otimes\mathcal{W}_{D^{3}\{(2,3)\}}}\\
\underset{231}{\left(  M\otimes\mathcal{W}_{D^{3}}\right)  }%
\end{array}
\right)  \times_{M\otimes\mathcal{W}_{D(2)}}\left(
\begin{array}
[c]{c}%
\underset{132}{\left(  M\otimes\mathcal{W}_{D^{3}}\right)  }\\
\times_{M\otimes\mathcal{W}_{D^{3}\{(2,3)\}}}\\
\underset{123}{\left(  M\otimes\mathcal{W}_{D^{3}}\right)  }%
\end{array}
\right)
\end{align*}
\begin{align*}
& \zeta^{\overset{\cdot}{\underset{1}{-}}}\times_{M\otimes\mathcal{W}_{D(2)}%
}\zeta^{\overset{\cdot}{\underset{1}{-}}}:\\
& :\left(
\begin{array}
[c]{c}%
\underset{321}{\left(  M\otimes\mathcal{W}_{D^{3}}\right)  }\\
\times_{M\otimes\mathcal{W}_{D^{3}\{(2,3)\}}}\\
\underset{231}{\left(  M\otimes\mathcal{W}_{D^{3}}\right)  }%
\end{array}
\right)  \times_{M\otimes\mathcal{W}_{D(2)}}\left(
\begin{array}
[c]{c}%
\underset{132}{\left(  M\otimes\mathcal{W}_{D^{3}}\right)  }\\
\times_{M\otimes\mathcal{W}_{D^{3}\{(2,3)\}}}\\
\underset{123}{\left(  M\otimes\mathcal{W}_{D^{3}}\right)  }%
\end{array}
\right) \\
& \rightarrow\left(  M\otimes\mathcal{W}_{D^{2}}\right)  \times_{M\otimes
\mathcal{W}_{D(2)}}\left(  M\otimes\mathcal{W}_{D^{2}}\right)
\end{align*}
\[
\zeta^{\overset{\cdot}{-}}:\left(  M\otimes\mathcal{W}_{D^{2}}\right)
\times_{M\otimes\mathcal{W}_{D(2)}}\left(  M\otimes\mathcal{W}_{D^{2}}\right)
\rightarrow M\otimes\mathcal{W}_{D}%
\]
in succession.

\item We will write the morphism
\begin{align*}
& \zeta^{(\ast_{231}\overset{\cdot}{\underset{2}{-}}\ast_{213})\overset{\cdot
}{-}(\ast_{312}\overset{\cdot}{\underset{2}{-}}\ast_{132})}:\\
& :\left[
\begin{array}
[c]{ccc}
&
\begin{array}
[c]{c}%
\left(
\begin{array}
[c]{c}%
\underset{321}{\left(  M\otimes\mathcal{W}_{D^{3}}\right)  }\\
\times_{M\otimes\mathcal{W}_{D^{3}\{(2,3)\}}}\\
\underset{231}{\left(  M\otimes\mathcal{W}_{D^{3}}\right)  }%
\end{array}
\right) \\
\times_{M\otimes\mathcal{W}_{D(2)}}\\
\left(
\begin{array}
[c]{c}%
\underset{132}{\left(  M\otimes\mathcal{W}_{D^{3}}\right)  }\\
\times_{M\otimes\mathcal{W}_{D^{3}\{(2,3)\}}}\\
\underset{123}{\left(  M\otimes\mathcal{W}_{D^{3}}\right)  }%
\end{array}
\right)
\end{array}
& \\%
\begin{array}
[c]{c}%
\times_{M\otimes\mathcal{W}_{D^{3}\oplus D^{3}}}\\
\,\\
\,
\end{array}
&  &
\begin{array}
[c]{c}%
\times_{M\otimes\mathcal{W}_{D^{3}\oplus D^{3}}}\\
\,\\
\,
\end{array}
\\%
\begin{array}
[c]{c}%
\left(
\begin{array}
[c]{c}%
\underset{132}{\left(  M\otimes\mathcal{W}_{D^{3}}\right)  }\\
\times_{M\otimes\mathcal{W}_{D^{3}\{(1,3)\}}}\\
\underset{312}{\left(  M\otimes\mathcal{W}_{D^{3}}\right)  }%
\end{array}
\right) \\
\times_{M\otimes\mathcal{W}_{D(2)}}\\
\left(
\begin{array}
[c]{c}%
\underset{213}{\left(  M\otimes\mathcal{W}_{D^{3}}\right)  }\\
\times_{M\otimes\mathcal{W}_{D^{3}\{(1,3)\}}}\\
\underset{231}{\left(  M\otimes\mathcal{W}_{D^{3}}\right)  }%
\end{array}
\right)
\end{array}
& \times_{M\otimes\mathcal{W}_{D^{3}\oplus D^{3}}} &
\begin{array}
[c]{c}%
\left(
\begin{array}
[c]{c}%
\underset{213}{\left(  M\otimes\mathcal{W}_{D^{3}}\right)  }\\
\times_{M\otimes\mathcal{W}_{D^{3}\{(1,2)\}}}\\
\underset{123}{\left(  M\otimes\mathcal{W}_{D^{3}}\right)  }%
\end{array}
\right) \\
\times_{M\otimes\mathcal{W}_{D(2)}}\\
\left(
\begin{array}
[c]{c}%
\underset{321}{\left(  M\otimes\mathcal{W}_{D^{3}}\right)  }\\
\times_{M\otimes\mathcal{W}_{D^{3}\{(1,2)\}}}\\
\underset{312}{\left(  M\otimes\mathcal{W}_{D^{3}}\right)  }%
\end{array}
\right)
\end{array}
\end{array}
\right] \\
& \rightarrow M\otimes\mathcal{W}_{D}%
\end{align*}
for the composition of morphisms
\begin{align*}
& \pi_{\left(  \left(  \underset{132}{\left(  M\otimes\mathcal{W}_{D^{3}%
}\right)  }\times_{M\otimes\mathcal{W}_{D^{3}\{(1,3)\}}}\underset{312}{\left(
M\otimes\mathcal{W}_{D^{3}}\right)  }\right)  \times_{M\otimes\mathcal{W}%
_{D(2)}}\left(  \underset{213}{\left(  M\otimes\mathcal{W}_{D^{3}}\right)
}\times_{M\otimes\mathcal{W}_{D^{3}\{(1,3)\}}}\underset{231}{\left(
M\otimes\mathcal{W}_{D^{3}}\right)  }\right)  \right)  }^{\triangle}:\\
& :\left[
\begin{array}
[c]{ccc}
&
\begin{array}
[c]{c}%
\left(
\begin{array}
[c]{c}%
\underset{321}{\left(  M\otimes\mathcal{W}_{D^{3}}\right)  }\\
\times_{M\otimes\mathcal{W}_{D^{3}\{(2,3)\}}}\\
\underset{231}{\left(  M\otimes\mathcal{W}_{D^{3}}\right)  }%
\end{array}
\right) \\
\times_{M\otimes\mathcal{W}_{D(2)}}\\
\left(
\begin{array}
[c]{c}%
\underset{132}{\left(  M\otimes\mathcal{W}_{D^{3}}\right)  }\\
\times_{M\otimes\mathcal{W}_{D^{3}\{(2,3)\}}}\\
\underset{123}{\left(  M\otimes\mathcal{W}_{D^{3}}\right)  }%
\end{array}
\right)
\end{array}
& \\%
\begin{array}
[c]{c}%
\times_{M\otimes\mathcal{W}_{D^{3}\oplus D^{3}}}\\
\,\\
\,
\end{array}
&  &
\begin{array}
[c]{c}%
\times_{M\otimes\mathcal{W}_{D^{3}\oplus D^{3}}}\\
\,\\
\,
\end{array}
\\%
\begin{array}
[c]{c}%
\left(
\begin{array}
[c]{c}%
\underset{132}{\left(  M\otimes\mathcal{W}_{D^{3}}\right)  }\\
\times_{M\otimes\mathcal{W}_{D^{3}\{(1,3)\}}}\\
\underset{312}{\left(  M\otimes\mathcal{W}_{D^{3}}\right)  }%
\end{array}
\right) \\
\times_{M\otimes\mathcal{W}_{D(2)}}\\
\left(
\begin{array}
[c]{c}%
\underset{213}{\left(  M\otimes\mathcal{W}_{D^{3}}\right)  }\\
\times_{M\otimes\mathcal{W}_{D^{3}\{(1,3)\}}}\\
\underset{231}{\left(  M\otimes\mathcal{W}_{D^{3}}\right)  }%
\end{array}
\right)
\end{array}
& \times_{M\otimes\mathcal{W}_{D^{3}\oplus D^{3}}} &
\begin{array}
[c]{c}%
\left(
\begin{array}
[c]{c}%
\underset{213}{\left(  M\otimes\mathcal{W}_{D^{3}}\right)  }\\
\times_{M\otimes\mathcal{W}_{D^{3}\{(1,2)\}}}\\
\underset{123}{\left(  M\otimes\mathcal{W}_{D^{3}}\right)  }%
\end{array}
\right) \\
\times_{M\otimes\mathcal{W}_{D(2)}}\\
\left(
\begin{array}
[c]{c}%
\underset{321}{\left(  M\otimes\mathcal{W}_{D^{3}}\right)  }\\
\times_{M\otimes\mathcal{W}_{D^{3}\{(1,2)\}}}\\
\underset{312}{\left(  M\otimes\mathcal{W}_{D^{3}}\right)  }%
\end{array}
\right)
\end{array}
\end{array}
\right] \\
& \rightarrow\left(
\begin{array}
[c]{c}%
\underset{132}{\left(  M\otimes\mathcal{W}_{D^{3}}\right)  }\\
\times_{M\otimes\mathcal{W}_{D^{3}\{(1,3)\}}}\\
\underset{312}{\left(  M\otimes\mathcal{W}_{D^{3}}\right)  }%
\end{array}
\right)  \times_{M\otimes\mathcal{W}_{D(2)}}\left(
\begin{array}
[c]{c}%
\underset{213}{\left(  M\otimes\mathcal{W}_{D^{3}}\right)  }\\
\times_{M\otimes\mathcal{W}_{D^{3}\{(1,3)\}}}\\
\underset{231}{\left(  M\otimes\mathcal{W}_{D^{3}}\right)  }%
\end{array}
\right)
\end{align*}
\begin{align*}
& \zeta^{\overset{\cdot}{\underset{2}{-}}}\times_{M\otimes\mathcal{W}_{D(2)}%
}\zeta^{\overset{\cdot}{\underset{2}{-}}}:\\
& :\left(
\begin{array}
[c]{c}%
\underset{132}{\left(  M\otimes\mathcal{W}_{D^{3}}\right)  }\\
\times_{M\otimes\mathcal{W}_{D^{3}\{(1,3)\}}}\\
\underset{312}{\left(  M\otimes\mathcal{W}_{D^{3}}\right)  }%
\end{array}
\right)  \times_{M\otimes\mathcal{W}_{D(2)}}\left(
\begin{array}
[c]{c}%
\underset{213}{\left(  M\otimes\mathcal{W}_{D^{3}}\right)  }\\
\times_{M\otimes\mathcal{W}_{D^{3}\{(1,3)\}}}\\
\underset{231}{\left(  M\otimes\mathcal{W}_{D^{3}}\right)  }%
\end{array}
\right) \\
& \rightarrow\left(  M\otimes\mathcal{W}_{D^{2}}\right)  \times_{M\otimes
\mathcal{W}_{D(2)}}\left(  M\otimes\mathcal{W}_{D^{2}}\right)
\end{align*}
\[
\zeta^{\overset{\cdot}{-}}:\left(  M\otimes\mathcal{W}_{D^{2}}\right)
\times_{M\otimes\mathcal{W}_{D(2)}}\left(  M\otimes\mathcal{W}_{D^{2}}\right)
\rightarrow M\otimes\mathcal{W}_{D}%
\]
in succession.

\item We will write the morphism
\begin{align*}
& \zeta^{(\ast_{312}\overset{\cdot}{\underset{3}{-}}\ast_{321})\overset{\cdot
}{-}(\ast_{123}\overset{\cdot}{\underset{3}{-}}\ast_{213})}:\\
& :\left[
\begin{array}
[c]{ccc}
&
\begin{array}
[c]{c}%
\left(
\begin{array}
[c]{c}%
\underset{321}{\left(  M\otimes\mathcal{W}_{D^{3}}\right)  }\\
\times_{M\otimes\mathcal{W}_{D^{3}\{(2,3)\}}}\\
\underset{231}{\left(  M\otimes\mathcal{W}_{D^{3}}\right)  }%
\end{array}
\right) \\
\times_{M\otimes\mathcal{W}_{D(2)}}\\
\left(
\begin{array}
[c]{c}%
\underset{132}{\left(  M\otimes\mathcal{W}_{D^{3}}\right)  }\\
\times_{M\otimes\mathcal{W}_{D^{3}\{(2,3)\}}}\\
\underset{123}{\left(  M\otimes\mathcal{W}_{D^{3}}\right)  }%
\end{array}
\right)
\end{array}
& \\%
\begin{array}
[c]{c}%
\times_{M\otimes\mathcal{W}_{D^{3}\oplus D^{3}}}\\
\,\\
\,
\end{array}
&  &
\begin{array}
[c]{c}%
\times_{M\otimes\mathcal{W}_{D^{3}\oplus D^{3}}}\\
\,\\
\,
\end{array}
\\%
\begin{array}
[c]{c}%
\left(
\begin{array}
[c]{c}%
\underset{132}{\left(  M\otimes\mathcal{W}_{D^{3}}\right)  }\\
\times_{M\otimes\mathcal{W}_{D^{3}\{(1,3)\}}}\\
\underset{312}{\left(  M\otimes\mathcal{W}_{D^{3}}\right)  }%
\end{array}
\right) \\
\times_{M\otimes\mathcal{W}_{D(2)}}\\
\left(
\begin{array}
[c]{c}%
\underset{213}{\left(  M\otimes\mathcal{W}_{D^{3}}\right)  }\\
\times_{M\otimes\mathcal{W}_{D^{3}\{(1,3)\}}}\\
\underset{231}{\left(  M\otimes\mathcal{W}_{D^{3}}\right)  }%
\end{array}
\right)
\end{array}
& \times_{M\otimes\mathcal{W}_{D^{3}\oplus D^{3}}} &
\begin{array}
[c]{c}%
\left(
\begin{array}
[c]{c}%
\underset{213}{\left(  M\otimes\mathcal{W}_{D^{3}}\right)  }\\
\times_{M\otimes\mathcal{W}_{D^{3}\{(1,2)\}}}\\
\underset{123}{\left(  M\otimes\mathcal{W}_{D^{3}}\right)  }%
\end{array}
\right) \\
\times_{M\otimes\mathcal{W}_{D(2)}}\\
\left(
\begin{array}
[c]{c}%
\underset{321}{\left(  M\otimes\mathcal{W}_{D^{3}}\right)  }\\
\times_{M\otimes\mathcal{W}_{D^{3}\{(1,2)\}}}\\
\underset{312}{\left(  M\otimes\mathcal{W}_{D^{3}}\right)  }%
\end{array}
\right)
\end{array}
\end{array}
\right] \\
& \rightarrow M\otimes\mathcal{W}_{D}%
\end{align*}
for the composition of morphisms
\begin{align*}
& \pi_{\left(  \left(  \underset{213}{\left(  M\otimes\mathcal{W}_{D^{3}%
}\right)  }\times_{M\otimes\mathcal{W}_{D^{3}\{(1,3)\}}}\underset{123}{\left(
M\otimes\mathcal{W}_{D^{3}}\right)  }\right)  \times_{M\otimes\mathcal{W}%
_{D(2)}}\left(  \underset{321}{\left(  M\otimes\mathcal{W}_{D^{3}}\right)
}\times_{M\otimes\mathcal{W}_{D^{3}\{(1,3)\}}}\underset{312}{\left(
M\otimes\mathcal{W}_{D^{3}}\right)  }\right)  \right)  }^{\triangle}:\\
& :\left[
\begin{array}
[c]{ccc}
&
\begin{array}
[c]{c}%
\left(
\begin{array}
[c]{c}%
\underset{321}{\left(  M\otimes\mathcal{W}_{D^{3}}\right)  }\\
\times_{M\otimes\mathcal{W}_{D^{3}\{(2,3)\}}}\\
\underset{231}{\left(  M\otimes\mathcal{W}_{D^{3}}\right)  }%
\end{array}
\right) \\
\times_{M\otimes\mathcal{W}_{D(2)}}\\
\left(
\begin{array}
[c]{c}%
\underset{132}{\left(  M\otimes\mathcal{W}_{D^{3}}\right)  }\\
\times_{M\otimes\mathcal{W}_{D^{3}\{(2,3)\}}}\\
\underset{123}{\left(  M\otimes\mathcal{W}_{D^{3}}\right)  }%
\end{array}
\right)
\end{array}
& \\%
\begin{array}
[c]{c}%
\times_{M\otimes\mathcal{W}_{D^{3}\oplus D^{3}}}\\
\,\\
\,
\end{array}
&  &
\begin{array}
[c]{c}%
\times_{M\otimes\mathcal{W}_{D^{3}\oplus D^{3}}}\\
\,\\
\,
\end{array}
\\%
\begin{array}
[c]{c}%
\left(
\begin{array}
[c]{c}%
\underset{132}{\left(  M\otimes\mathcal{W}_{D^{3}}\right)  }\\
\times_{M\otimes\mathcal{W}_{D^{3}\{(1,3)\}}}\\
\underset{312}{\left(  M\otimes\mathcal{W}_{D^{3}}\right)  }%
\end{array}
\right) \\
\times_{M\otimes\mathcal{W}_{D(2)}}\\
\left(
\begin{array}
[c]{c}%
\underset{213}{\left(  M\otimes\mathcal{W}_{D^{3}}\right)  }\\
\times_{M\otimes\mathcal{W}_{D^{3}\{(1,3)\}}}\\
\underset{231}{\left(  M\otimes\mathcal{W}_{D^{3}}\right)  }%
\end{array}
\right)
\end{array}
& \times_{M\otimes\mathcal{W}_{D^{3}\oplus D^{3}}} &
\begin{array}
[c]{c}%
\left(
\begin{array}
[c]{c}%
\underset{213}{\left(  M\otimes\mathcal{W}_{D^{3}}\right)  }\\
\times_{M\otimes\mathcal{W}_{D^{3}\{(1,2)\}}}\\
\underset{123}{\left(  M\otimes\mathcal{W}_{D^{3}}\right)  }%
\end{array}
\right) \\
\times_{M\otimes\mathcal{W}_{D(2)}}\\
\left(
\begin{array}
[c]{c}%
\underset{321}{\left(  M\otimes\mathcal{W}_{D^{3}}\right)  }\\
\times_{M\otimes\mathcal{W}_{D^{3}\{(1,2)\}}}\\
\underset{312}{\left(  M\otimes\mathcal{W}_{D^{3}}\right)  }%
\end{array}
\right)
\end{array}
\end{array}
\right] \\
& \rightarrow\left(
\begin{array}
[c]{c}%
\underset{213}{\left(  M\otimes\mathcal{W}_{D^{3}}\right)  }\\
\times_{M\otimes\mathcal{W}_{D^{3}\{(1,2)\}}}\\
\underset{123}{\left(  M\otimes\mathcal{W}_{D^{3}}\right)  }%
\end{array}
\right)  \times_{M\otimes\mathcal{W}_{D(2)}}\left(
\begin{array}
[c]{c}%
\underset{321}{\left(  M\otimes\mathcal{W}_{D^{3}}\right)  }\\
\times_{M\otimes\mathcal{W}_{D^{3}\{(1,2)\}}}\\
\underset{312}{\left(  M\otimes\mathcal{W}_{D^{3}}\right)  }%
\end{array}
\right)
\end{align*}
\begin{align*}
& \zeta^{\overset{\cdot}{\underset{3}{-}}}\times_{M\otimes\mathcal{W}_{D(2)}%
}\zeta^{\overset{\cdot}{\underset{3}{-}}}\\
& :\left(
\begin{array}
[c]{c}%
\underset{213}{\left(  M\otimes\mathcal{W}_{D^{3}}\right)  }\\
\times_{M\otimes\mathcal{W}_{D^{3}\{(1,2)\}}}\\
\underset{123}{\left(  M\otimes\mathcal{W}_{D^{3}}\right)  }%
\end{array}
\right)  \times_{M\otimes\mathcal{W}_{D(2)}}\left(
\begin{array}
[c]{c}%
\underset{321}{\left(  M\otimes\mathcal{W}_{D^{3}}\right)  }\\
\times_{M\otimes\mathcal{W}_{D^{3}\{(1,2)\}}}\\
\underset{312}{\left(  M\otimes\mathcal{W}_{D^{3}}\right)  }%
\end{array}
\right) \\
& \rightarrow\left(  M\otimes\mathcal{W}_{D^{2}}\right)  \times_{M\otimes
\mathcal{W}_{D(2)}}\left(  M\otimes\mathcal{W}_{D^{2}}\right)
\end{align*}
\[
\zeta^{\overset{\cdot}{-}}:\left(  M\otimes\mathcal{W}_{D^{2}}\right)
\times_{M\otimes\mathcal{W}_{D(2)}}\left(  M\otimes\mathcal{W}_{D^{2}}\right)
\rightarrow M\otimes\mathcal{W}_{D}%
\]
in succession.
\end{enumerate}
\end{notation}

\begin{theorem}
\label{t3.16}(\underline{The general Jacobi Identity}) The three morphisms
\begin{align*}
& \zeta^{(\ast_{123}\overset{\cdot}{\underset{1}{-}}\ast_{132})\overset{\cdot
}{-}(\ast_{231}\overset{\cdot}{\underset{1}{-}}\ast_{321})}:\\
& :\left[
\begin{array}
[c]{ccc}
&
\begin{array}
[c]{c}%
\left(
\begin{array}
[c]{c}%
\underset{321}{\left(  M\otimes\mathcal{W}_{D^{3}}\right)  }\\
\times_{M\otimes\mathcal{W}_{D^{3}\{(2,3)\}}}\\
\underset{231}{\left(  M\otimes\mathcal{W}_{D^{3}}\right)  }%
\end{array}
\right) \\
\times_{M\otimes\mathcal{W}_{D(2)}}\\
\left(
\begin{array}
[c]{c}%
\underset{132}{\left(  M\otimes\mathcal{W}_{D^{3}}\right)  }\\
\times_{M\otimes\mathcal{W}_{D^{3}\{(2,3)\}}}\\
\underset{123}{\left(  M\otimes\mathcal{W}_{D^{3}}\right)  }%
\end{array}
\right)
\end{array}
& \\%
\begin{array}
[c]{c}%
\times_{M\otimes\mathcal{W}_{D^{3}\oplus D^{3}}}\\
\,\\
\,
\end{array}
&  &
\begin{array}
[c]{c}%
\times_{M\otimes\mathcal{W}_{D^{3}\oplus D^{3}}}\\
\,\\
\,
\end{array}
\\%
\begin{array}
[c]{c}%
\left(
\begin{array}
[c]{c}%
\underset{132}{\left(  M\otimes\mathcal{W}_{D^{3}}\right)  }\\
\times_{M\otimes\mathcal{W}_{D^{3}\{(1,3)\}}}\\
\underset{312}{\left(  M\otimes\mathcal{W}_{D^{3}}\right)  }%
\end{array}
\right) \\
\times_{M\otimes\mathcal{W}_{D(2)}}\\
\left(
\begin{array}
[c]{c}%
\underset{213}{\left(  M\otimes\mathcal{W}_{D^{3}}\right)  }\\
\times_{M\otimes\mathcal{W}_{D^{3}\{(1,3)\}}}\\
\underset{231}{\left(  M\otimes\mathcal{W}_{D^{3}}\right)  }%
\end{array}
\right)
\end{array}
& \times_{M\otimes\mathcal{W}_{D^{3}\oplus D^{3}}} &
\begin{array}
[c]{c}%
\left(
\begin{array}
[c]{c}%
\underset{213}{\left(  M\otimes\mathcal{W}_{D^{3}}\right)  }\\
\times_{M\otimes\mathcal{W}_{D^{3}\{(1,2)\}}}\\
\underset{123}{\left(  M\otimes\mathcal{W}_{D^{3}}\right)  }%
\end{array}
\right) \\
\times_{M\otimes\mathcal{W}_{D(2)}}\\
\left(
\begin{array}
[c]{c}%
\underset{321}{\left(  M\otimes\mathcal{W}_{D^{3}}\right)  }\\
\times_{M\otimes\mathcal{W}_{D^{3}\{(1,2)\}}}\\
\underset{312}{\left(  M\otimes\mathcal{W}_{D^{3}}\right)  }%
\end{array}
\right)
\end{array}
\end{array}
\right] \\
& \rightarrow M\otimes\mathcal{W}_{D}%
\end{align*}
\begin{align*}
& \zeta^{(\ast_{231}\overset{\cdot}{\underset{2}{-}}\ast_{213})\overset{\cdot
}{-}(\ast_{312}\overset{\cdot}{\underset{2}{-}}\ast_{132})}:\\
& :\left[
\begin{array}
[c]{ccc}
&
\begin{array}
[c]{c}%
\left(
\begin{array}
[c]{c}%
\underset{321}{\left(  M\otimes\mathcal{W}_{D^{3}}\right)  }\\
\times_{M\otimes\mathcal{W}_{D^{3}\{(2,3)\}}}\\
\underset{231}{\left(  M\otimes\mathcal{W}_{D^{3}}\right)  }%
\end{array}
\right) \\
\times_{M\otimes\mathcal{W}_{D(2)}}\\
\left(
\begin{array}
[c]{c}%
\underset{132}{\left(  M\otimes\mathcal{W}_{D^{3}}\right)  }\\
\times_{M\otimes\mathcal{W}_{D^{3}\{(2,3)\}}}\\
\underset{123}{\left(  M\otimes\mathcal{W}_{D^{3}}\right)  }%
\end{array}
\right)
\end{array}
& \\%
\begin{array}
[c]{c}%
\times_{M\otimes\mathcal{W}_{D^{3}\oplus D^{3}}}\\
\,\\
\,
\end{array}
&  &
\begin{array}
[c]{c}%
\times_{M\otimes\mathcal{W}_{D^{3}\oplus D^{3}}}\\
\,\\
\,
\end{array}
\\%
\begin{array}
[c]{c}%
\left(
\begin{array}
[c]{c}%
\underset{132}{\left(  M\otimes\mathcal{W}_{D^{3}}\right)  }\\
\times_{M\otimes\mathcal{W}_{D^{3}\{(1,3)\}}}\\
\underset{312}{\left(  M\otimes\mathcal{W}_{D^{3}}\right)  }%
\end{array}
\right) \\
\times_{M\otimes\mathcal{W}_{D(2)}}\\
\left(
\begin{array}
[c]{c}%
\underset{213}{\left(  M\otimes\mathcal{W}_{D^{3}}\right)  }\\
\times_{M\otimes\mathcal{W}_{D^{3}\{(1,3)\}}}\\
\underset{231}{\left(  M\otimes\mathcal{W}_{D^{3}}\right)  }%
\end{array}
\right)
\end{array}
& \times_{M\otimes\mathcal{W}_{D^{3}\oplus D^{3}}} &
\begin{array}
[c]{c}%
\left(
\begin{array}
[c]{c}%
\underset{213}{\left(  M\otimes\mathcal{W}_{D^{3}}\right)  }\\
\times_{M\otimes\mathcal{W}_{D^{3}\{(1,2)\}}}\\
\underset{123}{\left(  M\otimes\mathcal{W}_{D^{3}}\right)  }%
\end{array}
\right) \\
\times_{M\otimes\mathcal{W}_{D(2)}}\\
\left(
\begin{array}
[c]{c}%
\underset{321}{\left(  M\otimes\mathcal{W}_{D^{3}}\right)  }\\
\times_{M\otimes\mathcal{W}_{D^{3}\{(1,2)\}}}\\
\underset{312}{\left(  M\otimes\mathcal{W}_{D^{3}}\right)  }%
\end{array}
\right)
\end{array}
\end{array}
\right] \\
& \rightarrow M\otimes\mathcal{W}_{D}%
\end{align*}
\begin{align*}
& \zeta^{(\ast_{312}\overset{\cdot}{\underset{3}{-}}\ast_{321})\overset{\cdot
}{-}(\ast_{123}\overset{\cdot}{\underset{3}{-}}\ast_{213})}:\\
& :\left[
\begin{array}
[c]{ccc}
&
\begin{array}
[c]{c}%
\left(
\begin{array}
[c]{c}%
\underset{321}{\left(  M\otimes\mathcal{W}_{D^{3}}\right)  }\\
\times_{M\otimes\mathcal{W}_{D^{3}\{(2,3)\}}}\\
\underset{231}{\left(  M\otimes\mathcal{W}_{D^{3}}\right)  }%
\end{array}
\right) \\
\times_{M\otimes\mathcal{W}_{D(2)}}\\
\left(
\begin{array}
[c]{c}%
\underset{132}{\left(  M\otimes\mathcal{W}_{D^{3}}\right)  }\\
\times_{M\otimes\mathcal{W}_{D^{3}\{(2,3)\}}}\\
\underset{123}{\left(  M\otimes\mathcal{W}_{D^{3}}\right)  }%
\end{array}
\right)
\end{array}
& \\%
\begin{array}
[c]{c}%
\times_{M\otimes\mathcal{W}_{D^{3}\oplus D^{3}}}\\
\,\\
\,
\end{array}
&  &
\begin{array}
[c]{c}%
\times_{M\otimes\mathcal{W}_{D^{3}\oplus D^{3}}}\\
\,\\
\,
\end{array}
\\%
\begin{array}
[c]{c}%
\left(
\begin{array}
[c]{c}%
\underset{132}{\left(  M\otimes\mathcal{W}_{D^{3}}\right)  }\\
\times_{M\otimes\mathcal{W}_{D^{3}\{(1,3)\}}}\\
\underset{312}{\left(  M\otimes\mathcal{W}_{D^{3}}\right)  }%
\end{array}
\right) \\
\times_{M\otimes\mathcal{W}_{D(2)}}\\
\left(
\begin{array}
[c]{c}%
\underset{213}{\left(  M\otimes\mathcal{W}_{D^{3}}\right)  }\\
\times_{M\otimes\mathcal{W}_{D^{3}\{(1,3)\}}}\\
\underset{231}{\left(  M\otimes\mathcal{W}_{D^{3}}\right)  }%
\end{array}
\right)
\end{array}
& \times_{M\otimes\mathcal{W}_{D^{3}\oplus D^{3}}} &
\begin{array}
[c]{c}%
\left(
\begin{array}
[c]{c}%
\underset{213}{\left(  M\otimes\mathcal{W}_{D^{3}}\right)  }\\
\times_{M\otimes\mathcal{W}_{D^{3}\{(1,2)\}}}\\
\underset{123}{\left(  M\otimes\mathcal{W}_{D^{3}}\right)  }%
\end{array}
\right) \\
\times_{M\otimes\mathcal{W}_{D(2)}}\\
\left(
\begin{array}
[c]{c}%
\underset{321}{\left(  M\otimes\mathcal{W}_{D^{3}}\right)  }\\
\times_{M\otimes\mathcal{W}_{D^{3}\{(1,2)\}}}\\
\underset{312}{\left(  M\otimes\mathcal{W}_{D^{3}}\right)  }%
\end{array}
\right)
\end{array}
\end{array}
\right] \\
& \rightarrow M\otimes\mathcal{W}_{D}%
\end{align*}
sum up only to vanish.
\end{theorem}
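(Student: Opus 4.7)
My plan is to mimic the argument used for the primordial Jacobi identity (Theorem \ref{t3.2}), but now built on top of the richer limit identification
\[
\left[\triangle\right]=M\otimes\mathcal{W}_{G}
\]
furnished by the corollary to Theorem \ref{t3.14}. Concretely, each of the three morphisms in the statement is, by construction, a composition of (i) a projection from $\left[\triangle\right]$ onto one of the three ``big'' factors $\left(\ldots\right)\times_{M\otimes\mathcal{W}_{D^{3}\oplus D^{3}}}\left(\ldots\right)=M\otimes\mathcal{W}_{E[i]}$, then (ii) the two parallel copies of $\zeta^{\overset{\cdot}{\underset{i}{-}}}$ running into $\left(M\otimes\mathcal{W}_{D^{2}}\right)\times_{M\otimes\mathcal{W}_{D(2)}}\left(M\otimes\mathcal{W}_{D^{2}}\right)$, and then (iii) the morphism $\zeta^{\overset{\cdot}{-}}$ landing in $M\otimes\mathcal{W}_{D}$. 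Under the identification $\left[\triangle\right]=M\otimes\mathcal{W}_{G}$, each such composition is of the form $\mathrm{id}_{M}\otimes\mathcal{W}_{f_{i}}$ for a uniquely determined arrow $f_{i}\colon D\rightarrow G$, exactly as in the primordial case where the three arrows were pinned down as $\mathrm{id}_{M}\otimes\mathcal{W}_{d\mapsto(0,0,d,0)}$, $\mathrm{id}_{M}\otimes\mathcal{W}_{d\mapsto(0,0,-d,d)}$ and $\mathrm{id}_{M}\otimes\mathcal{W}_{d\mapsto(0,0,0,-d)}$.

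The first step of the plan is therefore to compute $f_{1}$, $f_{2}$, $f_{3}$ explicitly. For $f_{1}$ (the one indexed by $\underset{1}{\overset{\cdot}{-}}$), I trace: start with $d\in D$, land in $M\otimes\mathcal{W}_{D^{2}}\times_{M\otimes\mathcal{W}_{D(2)}}M\otimes\mathcal{W}_{D^{2}}$ via the arrow defining $\zeta^{\overset{\cdot}{-}}$ (so $d$ is injected into the ``extra'' coordinate $X_{3}$ of the $D^{3}\{(1,3),(2,3)\}$ representation); then trace backwards through $\zeta^{\overset{\cdot}{\underset{1}{-}}}\times\zeta^{\overset{\cdot}{\underset{1}{-}}}$, which by the Notation defining $\zeta^{\overset{\cdot}{\underset{1}{-}}}$ places $d$ into the last slot of $D^{4}\{(2,4),(3,4)\}$; then apply the projection $\mathcal{W}_{k_{1}}$ from $\mathcal{W}_{G}$ down to $\mathcal{W}_{E[1]}$ backwards, i.e., read off which element of $G$ is sent by $k_{1}$ to the element of $E[1]$ we have just landed in. The same procedure applied to $k_{2}$ and $k_{3}$ yields $f_{2}$ and $f_{3}$, the bookkeeping being entirely mechanical once the formulas for $k_{1},k_{2},k_{3}$ (given in Theorem \ref{t3.14}) and for $\iota^{i}_{j}$ are laid side by side.

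The second step is to add. By linearity of the $\mathcal{W}$-construction on morphisms between linear infinitesimal objects, $f_{1}+f_{2}+f_{3}\colon D\rightarrow G$ equals the composite
\[
D\xrightarrow{d\mapsto(d,d,d)}D(3)\xrightarrow{\,g\,}G,
\]
where $g\colon D(3)\rightarrow G$ is the obvious arrow whose three coordinates are $f_{1},f_{2},f_{3}$ (this is the structural step that the primordial proof performed via $(d_{1},d_{2},d_{3})\mapsto(0,0,d_{1}-d_{2},d_{2}-d_{3})$). Then it suffices to show that $g$ lands identically in the zero of $G$. Looking at the explicit descriptions of $k_{1},k_{2},k_{3}$ in Theorem \ref{t3.14}, in each coordinate position one of the $f_{i}$ contributes a $+d$ and another contributes a $-d$, so after the substitution $d_{1}=d_{2}=d_{3}=d$ every coordinate cancels. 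Hence $g$ is the zero map and $f_{1}+f_{2}+f_{3}=0$, yielding the vanishing.

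The hard part will not be conceptual but combinatorial: matching up coordinate-by-coordinate the images of the eight-variable target $G$ under the three $k_{i}$'s and verifying that the cancellations occur in every one of the eight slots (including the compulsory ones in the $X_{7},X_{8}$ positions, where the proof of Lemma \ref{t3.15} already extracted the crucial relation $a_{7}^{1}+a_{7}^{2}+a_{7}^{3}=0$ that is the dual manifestation of this same cancellation). I expect the bookkeeping for $f_{1},f_{2},f_{3}$ to occupy most of the pages, with the final cancellation being a direct inspection; the cyclic symmetry of the whole setup can be exploited to cut the labor roughly in thirds.
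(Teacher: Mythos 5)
Your proposal follows essentially the same route as the paper's own proof: the paper likewise uses the identification of the triple limit with $M\otimes\mathcal{W}_{G}$ from Theorem \ref{t3.14}, computes each of the three morphisms as $\mathrm{id}_{M}\otimes\mathcal{W}_{f_{i}}$ for explicit arrows $f_{i}:D\rightarrow G$ (namely $d\mapsto(0,0,0,0,0,0,d,0)$, $d\mapsto(0,0,0,0,0,0,0,d)$ and $d\mapsto(0,0,0,0,0,0,-d,-d)$), and then sums them through the factorization $D\rightarrow D(3)\rightarrow G$ exactly as in the primordial case, obtaining the zero map by cancellation in the last two coordinates. The plan is correct; only the deferred bookkeeping (and the minor slip of describing $k_{1}:E[1]\rightarrow G$ as being read "backwards") separates it from the written proof.
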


\begin{proof}
The proof is divided into four steps.

\begin{enumerate}
\item The morphism
\begin{align*}
& \zeta^{(\ast_{123}\overset{\cdot}{\underset{1}{-}}\ast_{132})\overset{\cdot
}{-}(\ast_{231}\overset{\cdot}{\underset{1}{-}}\ast_{321})}:\\
& :\left[
\begin{array}
[c]{ccc}
&
\begin{array}
[c]{c}%
\left(
\begin{array}
[c]{c}%
\underset{321}{\left(  M\otimes\mathcal{W}_{D^{3}}\right)  }\\
\times_{M\otimes\mathcal{W}_{D^{3}\{(2,3)\}}}\\
\underset{231}{\left(  M\otimes\mathcal{W}_{D^{3}}\right)  }%
\end{array}
\right) \\
\times_{M\otimes\mathcal{W}_{D(2)}}\\
\left(
\begin{array}
[c]{c}%
\underset{132}{\left(  M\otimes\mathcal{W}_{D^{3}}\right)  }\\
\times_{M\otimes\mathcal{W}_{D^{3}\{(2,3)\}}}\\
\underset{123}{\left(  M\otimes\mathcal{W}_{D^{3}}\right)  }%
\end{array}
\right)
\end{array}
& \\%
\begin{array}
[c]{c}%
\times_{M\otimes\mathcal{W}_{D^{3}\oplus D^{3}}}\\
\,\\
\,
\end{array}
&  &
\begin{array}
[c]{c}%
\times_{M\otimes\mathcal{W}_{D^{3}\oplus D^{3}}}\\
\,\\
\,
\end{array}
\\%
\begin{array}
[c]{c}%
\left(
\begin{array}
[c]{c}%
\underset{132}{\left(  M\otimes\mathcal{W}_{D^{3}}\right)  }\\
\times_{M\otimes\mathcal{W}_{D^{3}\{(1,3)\}}}\\
\underset{312}{\left(  M\otimes\mathcal{W}_{D^{3}}\right)  }%
\end{array}
\right) \\
\times_{M\otimes\mathcal{W}_{D(2)}}\\
\left(
\begin{array}
[c]{c}%
\underset{213}{\left(  M\otimes\mathcal{W}_{D^{3}}\right)  }\\
\times_{M\otimes\mathcal{W}_{D^{3}\{(1,3)\}}}\\
\underset{231}{\left(  M\otimes\mathcal{W}_{D^{3}}\right)  }%
\end{array}
\right)
\end{array}
& \times_{M\otimes\mathcal{W}_{D^{3}\oplus D^{3}}} &
\begin{array}
[c]{c}%
\left(
\begin{array}
[c]{c}%
\underset{213}{\left(  M\otimes\mathcal{W}_{D^{3}}\right)  }\\
\times_{M\otimes\mathcal{W}_{D^{3}\{(1,2)\}}}\\
\underset{123}{\left(  M\otimes\mathcal{W}_{D^{3}}\right)  }%
\end{array}
\right) \\
\times_{M\otimes\mathcal{W}_{D(2)}}\\
\left(
\begin{array}
[c]{c}%
\underset{321}{\left(  M\otimes\mathcal{W}_{D^{3}}\right)  }\\
\times_{M\otimes\mathcal{W}_{D^{3}\{(1,2)\}}}\\
\underset{312}{\left(  M\otimes\mathcal{W}_{D^{3}}\right)  }%
\end{array}
\right)
\end{array}
\end{array}
\right] \\
& \rightarrow M\otimes\mathcal{W}_{D}%
\end{align*}
is equivalent to the composition of
\[
\mathrm{id}_{M}\otimes k_{1}:M\otimes\mathcal{W}_{G}\rightarrow M\otimes
\mathcal{W}_{E\left[  1\right]  }%
\]
\[
\mathrm{id}_{M}\otimes\mathcal{W}_{(d_{1},d_{2},d_{3})\in D^{3}%
\{(1,3),(2,3)\}\mapsto(d_{1},0,0,0,0,d_{2},d_{3})\in E[1]}:M\otimes
\mathcal{W}_{E\left[  1\right]  }\rightarrow M\otimes\mathcal{W}%
_{D^{3}\{(1,3),(2,3)\}}%
\]
\[
\mathrm{id}_{M}\otimes\mathcal{W}_{d\in D\mapsto(0,0,d)\in D^{3}%
\{(1,3),(2,3)\}}:M\otimes\mathcal{W}_{D^{3}\{(1,3),(2,3)\}}\rightarrow
M\otimes\mathcal{W}_{D}%
\]
in succession, which results in
\[
\mathrm{id}_{M}\otimes\mathcal{W}_{d\in D\mapsto(0,0,0,0,0,0,d,0)\in
G}:M\otimes\mathcal{W}_{G}\rightarrow M\otimes\mathcal{W}_{D}%
\]

\item The morphism
\begin{align*}
& \zeta^{(\ast_{231}\overset{\cdot}{\underset{2}{-}}\ast_{213})\overset{\cdot
}{-}(\ast_{312}\overset{\cdot}{\underset{2}{-}}\ast_{132})}:\\
& :\left[
\begin{array}
[c]{ccc}
&
\begin{array}
[c]{c}%
\left(
\begin{array}
[c]{c}%
\underset{321}{\left(  M\otimes\mathcal{W}_{D^{3}}\right)  }\\
\times_{M\otimes\mathcal{W}_{D^{3}\{(2,3)\}}}\\
\underset{231}{\left(  M\otimes\mathcal{W}_{D^{3}}\right)  }%
\end{array}
\right) \\
\times_{M\otimes\mathcal{W}_{D(2)}}\\
\left(
\begin{array}
[c]{c}%
\underset{132}{\left(  M\otimes\mathcal{W}_{D^{3}}\right)  }\\
\times_{M\otimes\mathcal{W}_{D^{3}\{(2,3)\}}}\\
\underset{123}{\left(  M\otimes\mathcal{W}_{D^{3}}\right)  }%
\end{array}
\right)
\end{array}
& \\%
\begin{array}
[c]{c}%
\times_{M\otimes\mathcal{W}_{D^{3}\oplus D^{3}}}\\
\,\\
\,
\end{array}
&  &
\begin{array}
[c]{c}%
\times_{M\otimes\mathcal{W}_{D^{3}\oplus D^{3}}}\\
\,\\
\,
\end{array}
\\%
\begin{array}
[c]{c}%
\left(
\begin{array}
[c]{c}%
\underset{132}{\left(  M\otimes\mathcal{W}_{D^{3}}\right)  }\\
\times_{M\otimes\mathcal{W}_{D^{3}\{(1,3)\}}}\\
\underset{312}{\left(  M\otimes\mathcal{W}_{D^{3}}\right)  }%
\end{array}
\right) \\
\times_{M\otimes\mathcal{W}_{D(2)}}\\
\left(
\begin{array}
[c]{c}%
\underset{213}{\left(  M\otimes\mathcal{W}_{D^{3}}\right)  }\\
\times_{M\otimes\mathcal{W}_{D^{3}\{(1,3)\}}}\\
\underset{231}{\left(  M\otimes\mathcal{W}_{D^{3}}\right)  }%
\end{array}
\right)
\end{array}
& \times_{M\otimes\mathcal{W}_{D^{3}\oplus D^{3}}} &
\begin{array}
[c]{c}%
\left(
\begin{array}
[c]{c}%
\underset{213}{\left(  M\otimes\mathcal{W}_{D^{3}}\right)  }\\
\times_{M\otimes\mathcal{W}_{D^{3}\{(1,2)\}}}\\
\underset{123}{\left(  M\otimes\mathcal{W}_{D^{3}}\right)  }%
\end{array}
\right) \\
\times_{M\otimes\mathcal{W}_{D(2)}}\\
\left(
\begin{array}
[c]{c}%
\underset{321}{\left(  M\otimes\mathcal{W}_{D^{3}}\right)  }\\
\times_{M\otimes\mathcal{W}_{D^{3}\{(1,2)\}}}\\
\underset{312}{\left(  M\otimes\mathcal{W}_{D^{3}}\right)  }%
\end{array}
\right)
\end{array}
\end{array}
\right] \\
& \rightarrow M\otimes\mathcal{W}_{D}%
\end{align*}
is equivalent to the composition of
\[
\mathrm{id}_{M}\otimes k_{2}:M\otimes\mathcal{W}_{G}\rightarrow M\otimes
\mathcal{W}_{E\left[  2\right]  }%
\]
\[
\mathrm{id}_{M}\otimes\mathcal{W}_{(d_{1},d_{2},d_{3})\in D^{3}%
\{(1,3),(2,3)\}\mapsto(0,d_{1},0,0,0,d_{2},d_{3})\in E[2]}:M\otimes
\mathcal{W}_{E\left[  2\right]  }\rightarrow M\otimes\mathcal{W}%
_{D^{3}\{(1,3),(2,3)\}}%
\]
\[
\mathrm{id}_{M}\otimes\mathcal{W}_{d\in D\mapsto(0,0,d)\in D^{3}%
\{(1,3),(2,3)\}}:M\otimes\mathcal{W}_{D^{3}\{(1,3),(2,3)\}}\rightarrow
M\otimes\mathcal{W}_{D}%
\]
in succession, which results in
\[
\mathrm{id}_{M}\otimes\mathcal{W}_{d\in D\mapsto(0,0,0,0,0,0,0,d)\in
G}:M\otimes\mathcal{W}_{G}\rightarrow M\otimes\mathcal{W}_{D}%
\]

\item The morphism
\begin{align*}
& \zeta^{(\ast_{312}\overset{\cdot}{\underset{3}{-}}\ast_{321})\overset{\cdot
}{-}(\ast_{123}\overset{\cdot}{\underset{3}{-}}\ast_{213})}:\\
& :\left[
\begin{array}
[c]{ccc}
&
\begin{array}
[c]{c}%
\left(
\begin{array}
[c]{c}%
\underset{321}{\left(  M\otimes\mathcal{W}_{D^{3}}\right)  }\\
\times_{M\otimes\mathcal{W}_{D^{3}\{(2,3)\}}}\\
\underset{231}{\left(  M\otimes\mathcal{W}_{D^{3}}\right)  }%
\end{array}
\right) \\
\times_{M\otimes\mathcal{W}_{D(2)}}\\
\left(
\begin{array}
[c]{c}%
\underset{132}{\left(  M\otimes\mathcal{W}_{D^{3}}\right)  }\\
\times_{M\otimes\mathcal{W}_{D^{3}\{(2,3)\}}}\\
\underset{123}{\left(  M\otimes\mathcal{W}_{D^{3}}\right)  }%
\end{array}
\right)
\end{array}
& \\%
\begin{array}
[c]{c}%
\times_{M\otimes\mathcal{W}_{D^{3}\oplus D^{3}}}\\
\,\\
\,
\end{array}
&  &
\begin{array}
[c]{c}%
\times_{M\otimes\mathcal{W}_{D^{3}\oplus D^{3}}}\\
\,\\
\,
\end{array}
\\%
\begin{array}
[c]{c}%
\left(
\begin{array}
[c]{c}%
\underset{132}{\left(  M\otimes\mathcal{W}_{D^{3}}\right)  }\\
\times_{M\otimes\mathcal{W}_{D^{3}\{(1,3)\}}}\\
\underset{312}{\left(  M\otimes\mathcal{W}_{D^{3}}\right)  }%
\end{array}
\right) \\
\times_{M\otimes\mathcal{W}_{D(2)}}\\
\left(
\begin{array}
[c]{c}%
\underset{213}{\left(  M\otimes\mathcal{W}_{D^{3}}\right)  }\\
\times_{M\otimes\mathcal{W}_{D^{3}\{(1,3)\}}}\\
\underset{231}{\left(  M\otimes\mathcal{W}_{D^{3}}\right)  }%
\end{array}
\right)
\end{array}
& \times_{M\otimes\mathcal{W}_{D^{3}\oplus D^{3}}} &
\begin{array}
[c]{c}%
\left(
\begin{array}
[c]{c}%
\underset{213}{\left(  M\otimes\mathcal{W}_{D^{3}}\right)  }\\
\times_{M\otimes\mathcal{W}_{D^{3}\{(1,2)\}}}\\
\underset{123}{\left(  M\otimes\mathcal{W}_{D^{3}}\right)  }%
\end{array}
\right) \\
\times_{M\otimes\mathcal{W}_{D(2)}}\\
\left(
\begin{array}
[c]{c}%
\underset{321}{\left(  M\otimes\mathcal{W}_{D^{3}}\right)  }\\
\times_{M\otimes\mathcal{W}_{D^{3}\{(1,2)\}}}\\
\underset{312}{\left(  M\otimes\mathcal{W}_{D^{3}}\right)  }%
\end{array}
\right)
\end{array}
\end{array}
\right] \\
& \rightarrow M\otimes\mathcal{W}_{D}%
\end{align*}
is equivalent to the composition of
\[
\mathrm{id}_{M}\otimes k_{3}:M\otimes\mathcal{W}_{G}\rightarrow M\otimes
\mathcal{W}_{E\left[  3\right]  }%
\]
\[
\mathrm{id}_{M}\otimes\mathcal{W}_{(d_{1},d_{2},d_{3})\in D^{3}%
\{(1,3),(2,3)\}\mapsto(0,0,d_{1},0,0,d_{2},d_{3})\in E[3]}:M\otimes
\mathcal{W}_{E\left[  3\right]  }\rightarrow M\otimes\mathcal{W}%
_{D^{3}\{(1,3),(2,3)\}}%
\]
\[
\mathrm{id}_{M}\otimes\mathcal{W}_{d\in D\mapsto(0,0,d)\in D^{3}%
\{(1,3),(2,3)\}}:M\otimes\mathcal{W}_{D^{3}\{(1,3),(2,3)\}}\rightarrow
M\otimes\mathcal{W}_{D}%
\]
in succession, which results in
\[
\mathrm{id}_{M}\otimes\mathcal{W}_{d\in D\mapsto(0,0,0,0,0,0,-d,-d)\in
G}:M\otimes\mathcal{W}_{G}\rightarrow M\otimes\mathcal{W}_{D}%
\]

\item Therefore
\begin{align*}
& \zeta^{(\ast_{123}\overset{\cdot}{\underset{1}{-}}\ast_{132})\overset{\cdot
}{-}(\ast_{231}\overset{\cdot}{\underset{1}{-}}\ast_{321})}+\zeta^{(\ast
_{231}\overset{\cdot}{\underset{2}{-}}\ast_{213})\overset{\cdot}{-}(\ast
_{312}\overset{\cdot}{\underset{2}{-}}\ast_{132})}+\zeta^{(\ast_{312}%
\overset{\cdot}{\underset{3}{-}}\ast_{321})\overset{\cdot}{-}(\ast
_{123}\overset{\cdot}{\underset{3}{-}}\ast_{213})}:\\
& :\left[
\begin{array}
[c]{ccc}
&
\begin{array}
[c]{c}%
\left(
\begin{array}
[c]{c}%
\underset{321}{\left(  M\otimes\mathcal{W}_{D^{3}}\right)  }\\
\times_{M\otimes\mathcal{W}_{D^{3}\{(2,3)\}}}\\
\underset{231}{\left(  M\otimes\mathcal{W}_{D^{3}}\right)  }%
\end{array}
\right) \\
\times_{M\otimes\mathcal{W}_{D(2)}}\\
\left(
\begin{array}
[c]{c}%
\underset{132}{\left(  M\otimes\mathcal{W}_{D^{3}}\right)  }\\
\times_{M\otimes\mathcal{W}_{D^{3}\{(2,3)\}}}\\
\underset{123}{\left(  M\otimes\mathcal{W}_{D^{3}}\right)  }%
\end{array}
\right)
\end{array}
& \\%
\begin{array}
[c]{c}%
\times_{M\otimes\mathcal{W}_{D^{3}\oplus D^{3}}}\\
\,\\
\,
\end{array}
&  &
\begin{array}
[c]{c}%
\times_{M\otimes\mathcal{W}_{D^{3}\oplus D^{3}}}\\
\,\\
\,
\end{array}
\\%
\begin{array}
[c]{c}%
\left(
\begin{array}
[c]{c}%
\underset{132}{\left(  M\otimes\mathcal{W}_{D^{3}}\right)  }\\
\times_{M\otimes\mathcal{W}_{D^{3}\{(1,3)\}}}\\
\underset{312}{\left(  M\otimes\mathcal{W}_{D^{3}}\right)  }%
\end{array}
\right) \\
\times_{M\otimes\mathcal{W}_{D(2)}}\\
\left(
\begin{array}
[c]{c}%
\underset{213}{\left(  M\otimes\mathcal{W}_{D^{3}}\right)  }\\
\times_{M\otimes\mathcal{W}_{D^{3}\{(1,3)\}}}\\
\underset{231}{\left(  M\otimes\mathcal{W}_{D^{3}}\right)  }%
\end{array}
\right)
\end{array}
& \times_{M\otimes\mathcal{W}_{D^{3}\oplus D^{3}}} &
\begin{array}
[c]{c}%
\left(
\begin{array}
[c]{c}%
\underset{213}{\left(  M\otimes\mathcal{W}_{D^{3}}\right)  }\\
\times_{M\otimes\mathcal{W}_{D^{3}\{(1,2)\}}}\\
\underset{123}{\left(  M\otimes\mathcal{W}_{D^{3}}\right)  }%
\end{array}
\right) \\
\times_{M\otimes\mathcal{W}_{D(2)}}\\
\left(
\begin{array}
[c]{c}%
\underset{321}{\left(  M\otimes\mathcal{W}_{D^{3}}\right)  }\\
\times_{M\otimes\mathcal{W}_{D^{3}\{(1,2)\}}}\\
\underset{312}{\left(  M\otimes\mathcal{W}_{D^{3}}\right)  }%
\end{array}
\right)
\end{array}
\end{array}
\right] \\
& \rightarrow M\otimes\mathcal{W}_{D}%
\end{align*}
is equivalent to
\begin{align*}
& \left(  \mathrm{id}_{M}\otimes\mathcal{W}_{d\in D\mapsto\left(
d,d,d\right)  \in D\left(  3\right)  }\right)  \circ(\mathrm{id}_{M}%
\otimes\mathcal{W}_{(d_{1},d_{2},d_{3})\in D(3)\mapsto(0,0,0,0,0,0,d_{1}%
-d_{3},d_{2}-d_{3})\in G})\\
& =\mathrm{id}_{M}\otimes\left(  \mathcal{W}_{d\in D\mapsto\left(
d,d,d\right)  \in D\left(  3\right)  }\circ\mathcal{W}_{(d_{1},d_{2},d_{3})\in
D(3)\mapsto(0,0,0,0,0,0,d_{1}-d_{3},d_{2}-d_{3})\in G}\right) \\
& =\mathrm{id}_{M}\otimes\mathcal{W}_{d\in D\mapsto(0,0,0,0,0,0,0,0)\in G}%
\end{align*}
This completes the proof.
\end{enumerate}
\end{proof}

\section{\label{s3.3}From the General Jacobi Identity to the Jacobi Identity}

\begin{notation}
We write
\[
\left(  M^{M}\otimes\mathcal{W}_{D}\right)  _{\mathrm{id}_{M}}%
\]
for the pullback of
\[%
\begin{array}
[c]{ccc}%
\left(  M^{M}\otimes\mathcal{W}_{D}\right)  _{\mathrm{id}_{M}} & \rightarrow &
M^{M}\otimes\mathcal{W}_{D}\\
\downarrow &  & \downarrow\\
1 & \rightarrow & M^{M}%
\end{array}
\]
where the right arrow $M^{M}\otimes\mathcal{W}_{D}\rightarrow M^{M}$ is the
canonical projection, while the bottom arrow is the exponential transpose of
$\mathrm{id}_{M}:1\times M=M\rightarrow M$.
\end{notation}

\begin{theorem}
\label{t3.3.1}The composition of morphisms
\begin{equation}
\left(  \underset{1}{M^{M}\otimes\mathcal{W}_{D}}\right)  _{\mathrm{id}_{M}%
}\times\left(  \underset{2}{M^{M}\otimes\mathcal{W}_{D}}\right)
_{\mathrm{id}_{M}}\,\underrightarrow{\mathrm{Ass}_{M}^{1,1}}\,\left(
M^{M}\otimes\mathcal{W}_{D^{2}}\right)  _{\mathrm{id}_{M}}\label{3.3.1.1}%
\end{equation}
\[
M^{M}\otimes\mathcal{W}_{D^{2}}\rightarrow M^{M}\otimes\mathcal{W}_{D\left(
2\right)  }%
\]
in succession is equivalent to the composition of morphisms
\begin{align}
& \left(  \underset{1}{M^{M}\otimes\mathcal{W}_{D}}\right)  _{\mathrm{id}_{M}%
}\times\left(  \underset{2}{M^{M}\otimes\mathcal{W}_{D}}\right)
_{\mathrm{id}_{M}}\nonumber\\
& \rightarrow\left(  \underset{2}{M^{M}\otimes\mathcal{W}_{D}}\right)
_{\mathrm{id}_{M}}\times\left(  \underset{1}{M^{M}\otimes\mathcal{W}_{D}%
}\right)  _{\mathrm{id}_{M}}\nonumber\\
& \,\underrightarrow{\mathrm{Ass}_{M}^{1,1}}\,\left(  M^{M}\otimes
\mathcal{W}_{D^{2}}\right)  _{\mathrm{id}_{M}}\nonumber\\
& \underrightarrow{\mathrm{id}_{M^{M}}\otimes\mathcal{W}_{(d_{1},d_{2})\in
D^{2}\mapsto(d_{2},d_{1})\in D^{2}}}\,\left(  M^{M}\otimes\mathcal{W}_{D^{2}%
}\right)  _{\mathrm{id}_{M}}\label{3.3.1.2}%
\end{align}
\[
\left(  M^{M}\otimes\mathcal{W}_{D^{2}}\right)  _{\mathrm{id}_{M}}%
\rightarrow\left(  M^{M}\otimes\mathcal{W}_{D\left(  2\right)  }\right)
_{\mathrm{id}_{M}}%
\]
in succession, so that we have
\begin{align}
& \left(  \underset{1}{M^{M}\otimes\mathcal{W}_{D}}\right)  _{\mathrm{id}_{M}%
}\times\left(  \underset{2}{M^{M}\otimes\mathcal{W}_{D}}\right)
_{\mathrm{id}_{M}}\,\underrightarrow{\text{((\ref{3.3.1.2}),(\ref{3.3.1.1}))}%
}\nonumber\\
& \left(  M^{M}\otimes\mathcal{W}_{D^{2}}\right)  _{\mathrm{id}_{M}}%
\times_{M^{M}\otimes\mathcal{W}_{D\left(  2\right)  }}\left(  M^{M}%
\otimes\mathcal{W}_{D^{2}}\right)  _{\mathrm{id}_{M}}\,\underrightarrow
{\zeta^{\overset{\cdot}{-}}}\,\left(  M^{M}\otimes\mathcal{W}_{D}\right)
_{\mathrm{id}_{M}}\label{3.3.1.3}%
\end{align}
which is equivalent to
\begin{equation}
L_{M}:\left(  \underset{1}{M^{M}\otimes\mathcal{W}_{D}}\right)  _{\mathrm{id}%
_{M}}\times\left(  \underset{2}{M^{M}\otimes\mathcal{W}_{D}}\right)
_{\mathrm{id}_{M}}\rightarrow\left(  M^{M}\otimes\mathcal{W}_{D}\right)
_{\mathrm{id}_{M}}\label{3.3.1.4}%
\end{equation}

\end{theorem}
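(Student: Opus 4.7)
The plan is to first establish that the two compositions (\ref{3.3.1.1}) and (\ref{3.3.1.2}) agree as morphisms into $M^M\otimes\mathcal{W}_{D(2)}$, and then to define $L_M$ by feeding the resulting pair (which now factors through the fibre product $(M^M\otimes\mathcal{W}_{D^2})\times_{M^M\otimes\mathcal{W}_{D(2)}}(M^M\otimes\mathcal{W}_{D^2})$) into $\zeta^{\overset{\cdot}{-}}$, the morphism introduced in the Notation after Proposition \ref{t3.1}. Since $\zeta^{\overset{\cdot}{-}}$ is already in hand, the real content is the equality claim; everything else is formal composition plus a base-point check.

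To prove the equality, I would pass to the underlying Weil-algebra computation, reusing the same kind of polynomial bookkeeping that carried Lemma \ref{t3.4} through. Under the canonical identification $\mathcal{W}_D\otimes\mathcal{W}_D\cong\mathcal{W}_{D^2}$, the morphism $\mathrm{Ass}_M^{1,1}$ is the natural map induced by the monoid structure on $M^M$ paired with this tensor-product isomorphism, the two tensor factors corresponding to the Weil-algebra variables $X_1$ and $X_2$ respectively. The transposition $(d_1,d_2)\mapsto(d_2,d_1)$ induces the swap of $X_1$ and $X_2$. Consequently, path (\ref{3.3.1.2})---swap inputs, then $\mathrm{Ass}_M^{1,1}$, then $\mathrm{id}_{M^M}\otimes\mathcal{W}_{\mathrm{swap}}$---differs from (\ref{3.3.1.1}) only in the coefficient of $X_1X_2$, which is precisely the infinitesimal manifestation of the non-commutativity of composition in $M^M$. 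Since $\mathcal{W}_{D(2)}=\mathcal{W}_{D^2}/(X_1X_2)$, the projection $\mathrm{id}_{M^M}\otimes\mathcal{W}_{i_{D(2)}^{D^2}}$ annihilates this single offending term, so the two projected morphisms coincide.

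Granted the equality, the pair $\bigl((\ref{3.3.1.2}),(\ref{3.3.1.1})\bigr)$ factors through the above fibre product, and post-composition with $\zeta^{\overset{\cdot}{-}}$ produces (\ref{3.3.1.3}), a morphism into $M^M\otimes\mathcal{W}_D$. To check that its image actually lies in the subobject $(M^M\otimes\mathcal{W}_D)_{\mathrm{id}_M}$, I would observe that the two inputs lie over $\mathrm{id}_M\in M^M$, that $\mathrm{Ass}_M^{1,1}$ therefore sends them over $\mathrm{id}_M$ as well (since the monoid composition of $\mathrm{id}_M$ with itself is $\mathrm{id}_M$), and that $\zeta^{\overset{\cdot}{-}}$, being built purely from pullback data over $M^M$, preserves the base-point. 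This yields (\ref{3.3.1.4}). The main obstacle is purely notational: one must be scrupulous about which Weil-algebra variable $X_i$ corresponds to which tensor factor under $\mathcal{W}_D\otimes\mathcal{W}_D\cong\mathcal{W}_{D^2}$ and about the direction of the swap, so that the symmetric second-order part is correctly identified and cancelled; the underlying mathematical phenomenon is the familiar one that two vector fields commute to first infinitesimal order.
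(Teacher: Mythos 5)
Your Weil-algebra argument for the first assertion is sound: the two composites differ only in the $X_1X_2$-component, which is annihilated in $\mathcal{W}_{D(2)}=k[X_1,X_2]/(X_1^2,X_2^2,X_1X_2)$, so the pair $((\ref{3.3.1.2}),(\ref{3.3.1.1}))$ does factor through the fibre product and can be fed into $\zeta^{\overset{\cdot}{-}}$; the base-point check is also fine. But this is precisely the part of the statement that the paper treats as routine.

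The genuine gap is at the very end. You write that the base-point verification ``yields (\ref{3.3.1.4})'', as though $L_M$ were being \emph{defined} by the composite (\ref{3.3.1.3}); but $L_M$ is the Lie bracket already introduced in the earlier installment of the series (\cite{nishi-6}), and the final clause of the theorem asserts an identification of two independently defined morphisms. The paper's own proof says explicitly that this equivalence of (\ref{3.3.1.3}) and (\ref{3.3.1.4}) is ``the nontrivial part of the statement'' and disposes of it by adapting the proof of Proposition 8 in \S 3.4 of \cite{lav} --- the classical synthetic result that the Lie bracket of two vector fields coincides with the strong difference of the two composite microsquares built from them. Your proposal contains no comparison of (\ref{3.3.1.3}) with the prior construction of $L_M$, so the one step carrying actual mathematical content is missing. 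To repair it you would need to recall how $L_M$ is defined in \cite{nishi-6} and carry out (or correctly import) the strong-difference computation showing that the two morphisms agree.
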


\begin{proof}
The nontrivial part of the statement is only the equivalence of (\ref{3.3.1.3}%
) and (\ref{3.3.1.4}), for which it is easy to modify the proof of Proposition
8 in \S 3.4 of \cite{lav}.
\end{proof}

The following proposition should be obvious.

\begin{proposition}
\label{t3.3.2}We have the following two statements.

\begin{enumerate}
\item The composition of morphisms
\begin{align}
& \left(  \left(  \underset{1}{M^{M}\otimes\mathcal{W}_{D^{2}}}\right)
_{\mathrm{id}_{M}}\times_{M^{M}\otimes\mathcal{W}_{D\left(  2\right)  }%
}\left(  \underset{2}{M^{M}\otimes\mathcal{W}_{D^{2}}}\right)  _{\mathrm{id}%
_{M}}\right)  \times\left(  M^{M}\otimes\mathcal{W}_{D}\right)  _{\mathrm{id}%
_{M}}\nonumber\\
& \rightarrow\left(  \underset{1}{M^{M}\otimes\mathcal{W}_{D^{2}}}\right)
_{\mathrm{id}_{M}}\times\left(  M^{M}\otimes\mathcal{W}_{D}\right)
_{\mathrm{id}_{M}}\,\underrightarrow{\mathrm{Ass}_{M}^{2,1}}\,\left(
M^{M}\otimes\mathcal{W}_{D^{3}}\right)  _{\mathrm{id}_{M}}\label{3.3.2.1}%
\end{align}
\[
\left(  M^{M}\otimes\mathcal{W}_{D^{3}}\right)  _{\mathrm{id}_{M}}%
\rightarrow\left(  M^{M}\otimes\mathcal{W}_{D^{3}\{(1,2)\}}\right)
_{\mathrm{id}_{M}}%
\]
in succession is equivalent to the composition of morphisms
\begin{align}
& \left(  \left(  \underset{1}{M^{M}\otimes\mathcal{W}_{D^{2}}}\right)
_{\mathrm{id}_{M}}\times_{M^{M}\otimes\mathcal{W}_{D\left(  2\right)  }%
}\left(  \underset{2}{M^{M}\otimes\mathcal{W}_{D^{2}}}\right)  _{\mathrm{id}%
_{M}}\right)  \times\left(  M^{M}\otimes\mathcal{W}_{D}\right)  _{\mathrm{id}%
_{M}}\nonumber\\
& \rightarrow\left(  \underset{2}{M^{M}\otimes\mathcal{W}_{D^{2}}}\right)
_{\mathrm{id}_{M}}\times\left(  M^{M}\otimes\mathcal{W}_{D}\right)
_{\mathrm{id}_{M}}\,\underrightarrow{\mathrm{Ass}_{M}^{2,1}}\,\left(
M^{M}\otimes\mathcal{W}_{D^{3}}\right)  _{\mathrm{id}_{M}}\label{3.3.2.2}%
\end{align}
\[
\left(  M^{M}\otimes\mathcal{W}_{D^{3}}\right)  _{\mathrm{id}_{M}}%
\rightarrow\left(  M^{M}\otimes\mathcal{W}_{D^{3}\{(1,2)\}}\right)
_{\mathrm{id}_{M}}%
\]
in succession, so that we have the morphism
\begin{align*}
& \left(  \left(  \underset{1}{M^{M}\otimes\mathcal{W}_{D^{2}}}\right)
_{\mathrm{id}_{M}}\times_{M^{M}\otimes\mathcal{W}_{D\left(  2\right)  }%
}\left(  \underset{2}{M^{M}\otimes\mathcal{W}_{D^{2}}}\right)  _{\mathrm{id}%
_{M}}\right)  \times\left(  M^{M}\otimes\mathcal{W}_{D}\right)  _{\mathrm{id}%
_{M}}\\
& \,\underrightarrow{\text{((\ref{3.3.2.1}),(\ref{3.3.2.2}))}}\,\left(
M^{M}\otimes\mathcal{W}_{D^{3}}\right)  _{\mathrm{id}_{M}}\times_{M^{M}%
\otimes\mathcal{W}_{D^{3}\{(1,2)\}}}\left(  M^{M}\otimes\mathcal{W}_{D^{3}%
}\right)  _{\mathrm{id}_{M}}\\
& \underrightarrow{\zeta^{\underset{3}{\overset{\cdot}{-}}}}\,\left(
M^{M}\otimes\mathcal{W}_{D^{2}}\right)  _{\mathrm{id}_{M}}%
\end{align*}
which is equivalent to the morphism
\begin{align*}
& \left(  \left(  \underset{1}{M^{M}\otimes\mathcal{W}_{D^{2}}}\right)
_{\mathrm{id}_{M}}\times_{M^{M}\otimes\mathcal{W}_{D\left(  2\right)  }%
}\left(  \underset{2}{M^{M}\otimes\mathcal{W}_{D^{2}}}\right)  _{\mathrm{id}%
_{M}}\right)  \times\left(  M^{M}\otimes\mathcal{W}_{D}\right)  _{\mathrm{id}%
_{M}}\\
& \underrightarrow{\zeta^{\overset{\cdot}{-}}\times\mathrm{id}_{M^{M}%
\otimes\mathcal{W}_{D}}}\,\left(  M^{M}\otimes\mathcal{W}_{D}\right)
_{\mathrm{id}_{M}}\times\left(  M^{M}\otimes\mathcal{W}_{D}\right)
_{\mathrm{id}_{M}}\,\underrightarrow{\mathrm{Ass}_{M}^{1,1}}\,\left(
M^{M}\otimes\mathcal{W}_{D^{2}}\right)  _{\mathrm{id}_{M}}%
\end{align*}

\item The composition of morphisms
\begin{align}
& \left(  M^{M}\otimes\mathcal{W}_{D}\right)  _{\mathrm{id}_{M}}\times\left(
\left(  \underset{1}{M^{M}\otimes\mathcal{W}_{D^{2}}}\right)  _{\mathrm{id}%
_{M}}\times_{M^{M}\otimes\mathcal{W}_{D\left(  2\right)  }}\left(
\underset{2}{M^{M}\otimes\mathcal{W}_{D^{2}}}\right)  _{\mathrm{id}_{M}%
}\right) \nonumber\\
& \rightarrow\left(  M^{M}\otimes\mathcal{W}_{D}\right)  _{\mathrm{id}_{M}%
}\times\left(  \underset{1}{M^{M}\otimes\mathcal{W}_{D^{2}}}\right)
_{\mathrm{id}_{M}}\,\underrightarrow{\mathrm{Ass}_{M}^{1,2}}\,\left(
M^{M}\otimes\mathcal{W}_{D^{3}}\right)  _{\mathrm{id}_{M}}\label{3.3.2.3}%
\end{align}
\[
\left(  M^{M}\otimes\mathcal{W}_{D^{3}}\right)  _{\mathrm{id}_{M}}%
\rightarrow\left(  M^{M}\otimes\mathcal{W}_{D^{3}\{(2,3)\}}\right)
_{\mathrm{id}_{M}}%
\]
in succession is equivalent to the composition of morphisms
\begin{align}
& \left(  M^{M}\otimes\mathcal{W}_{D}\right)  _{\mathrm{id}_{M}}\times\left(
\left(  \underset{1}{M^{M}\otimes\mathcal{W}_{D^{2}}}\right)  _{\mathrm{id}%
_{M}}\times_{M^{M}\otimes\mathcal{W}_{D\left(  2\right)  }}\left(
\underset{2}{M^{M}\otimes\mathcal{W}_{D^{2}}}\right)  _{\mathrm{id}_{M}%
}\right) \nonumber\\
& \rightarrow\left(  M^{M}\otimes\mathcal{W}_{D}\right)  _{\mathrm{id}_{M}%
}\times\left(  \underset{2}{M^{M}\otimes\mathcal{W}_{D^{2}}}\right)
_{\mathrm{id}_{M}}\,\underrightarrow{\mathrm{Ass}_{M}^{1,2}}\,\left(
M^{M}\otimes\mathcal{W}_{D^{3}}\right)  _{\mathrm{id}_{M}}\label{3.3.2.4}%
\end{align}
\[
\left(  M^{M}\otimes\mathcal{W}_{D^{3}}\right)  _{\mathrm{id}_{M}}%
\rightarrow\left(  M^{M}\otimes\mathcal{W}_{D^{3}\{(2,3)\}}\right)
_{\mathrm{id}_{M}}%
\]
in succession, so that we have the morphism
\begin{align*}
& \left(  M^{M}\otimes\mathcal{W}_{D}\right)  _{\mathrm{id}_{M}}\times\left(
\left(  \underset{1}{M^{M}\otimes\mathcal{W}_{D^{2}}}\right)  _{\mathrm{id}%
_{M}}\times_{M^{M}\otimes\mathcal{W}_{D\left(  2\right)  }}\left(
\underset{2}{M^{M}\otimes\mathcal{W}_{D^{2}}}\right)  _{\mathrm{id}_{M}%
}\right) \\
& \,\underrightarrow{\text{((\ref{3.3.2.3}),(\ref{3.3.2.4}))}}\,\left(
M^{M}\otimes\mathcal{W}_{D^{3}}\right)  _{\mathrm{id}_{M}}\times_{M^{M}%
\otimes\mathcal{W}_{D^{3}\{(2,3)\}}}\left(  M^{M}\otimes\mathcal{W}_{D^{3}%
}\right)  _{\mathrm{id}_{M}}\\
& \underrightarrow{\zeta^{\underset{1}{\overset{\cdot}{-}}}}\,\left(
M^{M}\otimes\mathcal{W}_{D^{2}}\right)  _{\mathrm{id}_{M}}%
\end{align*}
which is equivalent to the morphism
\begin{align*}
& \left(  M^{M}\otimes\mathcal{W}_{D}\right)  _{\mathrm{id}_{M}}\times\left(
\left(  \underset{1}{M^{M}\otimes\mathcal{W}_{D^{2}}}\right)  _{\mathrm{id}%
_{M}}\times_{M^{M}\otimes\mathcal{W}_{D\left(  2\right)  }}\left(
\underset{2}{M^{M}\otimes\mathcal{W}_{D^{2}}}\right)  _{\mathrm{id}_{M}%
}\right) \\
& \underrightarrow{\zeta^{\overset{\cdot}{-}}\times\mathrm{id}_{M^{M}%
\otimes\mathcal{W}_{D}}}\times\,\left(  M^{M}\otimes\mathcal{W}_{D}\right)
_{\mathrm{id}_{M}}\times\left(  M^{M}\otimes\mathcal{W}_{D}\right)
_{\mathrm{id}_{M}}\,\underrightarrow{\mathrm{Ass}_{M}^{1,1}}\,\left(
M^{M}\otimes\mathcal{W}_{D^{2}}\right)  _{\mathrm{id}_{M}}%
\end{align*}

\end{enumerate}
\end{proposition}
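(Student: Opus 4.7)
The plan is to prove both parts by a direct Weil-algebra calculation in the spirit of Lemmas \ref{t3.4} and \ref{t3.6}; the two parts are structurally identical, part (2) being obtained from part (1) by interchanging the positions of the $D$-factor and the pullback factor and replacing $\zeta^{\underset{3}{\overset{\cdot}{-}}}$ together with the projection to $D^3\{(1,2)\}$ by $\zeta^{\underset{1}{\overset{\cdot}{-}}}$ together with the projection to $D^3\{(2,3)\}$. Accordingly I describe only the argument for part (1).

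First I would verify that (\ref{3.3.2.1}) and (\ref{3.3.2.2}) coincide as morphisms into $\left(M^M \otimes \mathcal{W}_{D^3\{(1,2)\}}\right)_{\mathrm{id}_M}$. The projection $\mathcal{W}_{D^3} \to \mathcal{W}_{D^3\{(1,2)\}}$ kills the monomials $X_1 X_2$ and $X_1 X_2 X_3$, so the projected output of $\mathrm{Ass}_M^{2,1}(\xi_i, \eta)$ depends only on the basepoint, the linear coefficients of $\xi_i$, the linear coefficient of $\eta$, and their cross-interactions encoded by $\mathrm{Ass}$. But these data depend only on the common image of $\xi_i$ in $M^M \otimes \mathcal{W}_{D(2)}$ (together with $\eta$), which by construction of the pullback $\xi_1$ and $\xi_2$ share. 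Hence the two composites agree.

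Second, I would identify the pair $((\ref{3.3.2.1}),(\ref{3.3.2.2}))$, landed in the pullback over $M^M \otimes \mathcal{W}_{D^3\{(1,2)\}}$ and followed by $\zeta^{\underset{3}{\overset{\cdot}{-}}}$, with the two-step composite on the right. By the corollary to Proposition \ref{t3.9} (applied with $M$ replaced by $M^M$), that pullback is $\left(M^M \otimes \mathcal{W}_{D^4\{(1,4),(2,4)\}}\right)_{\mathrm{id}_M}$, and $\zeta^{\underset{3}{\overset{\cdot}{-}}} = \mathrm{id}_{M^M} \otimes \mathcal{W}_\nu$ for $\nu : (d_1, d_2) \in D^2 \mapsto (0,0,d_1,d_2) \in D^4\{(1,4),(2,4)\}$ extracts the coefficients of $1$, $X_3$, $X_4$, and $X_3 X_4$. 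Unpacking the isomorphism of that corollary, these four coefficients read respectively as the basepoint, the linear coefficient of $\eta$, the difference of the $Y_1 Y_2$-coefficients of $\xi_1, \xi_2$ (which is precisely $\zeta^{\overset{\cdot}{-}}(\xi_1, \xi_2)$ by the computation in Section \ref{s3.1}), and the difference of the $X_1 X_2 X_3$-coefficients of the two $\mathrm{Ass}_M^{2,1}$-composites; the associativity coherence of the DG-category structure identifies this last difference with the $Y_1 Y_2$-coefficient of $\mathrm{Ass}_M^{1,1}(\zeta^{\overset{\cdot}{-}}(\xi_1, \xi_2), \eta)$, which gives exactly the right-hand morphism.

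The main obstacle is the coordinate bookkeeping in this last step: one must track which coefficients of the $\mathrm{Ass}_M^{2,1}$-composite depend on the $Y_1 Y_2$-coefficient of $\xi_i$ and verify the required cancellations via the coherence of $\mathrm{Ass}$ with itself. This is purely mechanical, following the calculational style of Lemma \ref{t3.4}, which is presumably why the text preceding the statement calls the result obvious. Part (2) is proved identically, interchanging the roles of the $D$- and $D^2$-factors throughout and invoking Proposition \ref{t3.7} in place of Proposition \ref{t3.9}.
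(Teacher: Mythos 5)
The paper itself offers no proof of Proposition \ref{t3.3.2} --- it is introduced with ``the following proposition should be obvious'' and immediately followed by the next notation --- so there is no written argument to compare yours against; what matters is whether your plan would actually close. The first half of your argument (that (\ref{3.3.2.1}) and (\ref{3.3.2.2}) agree after projecting to $\left(  M^{M}\otimes\mathcal{W}_{D^{3}\{(1,2)\}}\right)  _{\mathrm{id}_{M}}$ because the quotient $\mathcal{W}_{D^{3}}\rightarrow\mathcal{W}_{D^{3}\{(1,2)\}}$ retains only the image of $\xi_{i}$ in $M^{M}\otimes\mathcal{W}_{D(2)}$) is the right idea, but as written it is not yet a proof in this framework: $\mathrm{Ass}_{M}^{2,1}(\xi_{i},\eta)$ is a morphism in the abstract category $\mathcal{K}$, not a polynomial, so you cannot ``read off its coefficients'' the way Lemmas \ref{t3.4} and \ref{t3.6} do for elements of Weil algebras. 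What you actually need to invoke is the naturality of the $\mathrm{Ass}$ operations (the structural transformation $\alpha$ of the DG-category, as set up in \cite{nishi-6} and \cite{nishi-7}) with respect to the Weil-algebra morphisms $i_{D(2)}^{D^{2}}$, $\varphi_{3}^{3}$, $\psi_{3}^{3}$ and the limit identifications of Propositions \ref{t3.1} and \ref{t3.9}; the coefficient language is only a heuristic for that naturality.

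The more serious gap is in your second step, which you defer as ``purely mechanical.'' That step is where all the content sits, and attempting it exposes a bookkeeping issue you have not confronted: under $\zeta^{\underset{3}{\overset{\cdot}{-}}}=\mathrm{id}_{M^{M}}\otimes\mathcal{W}_{(d_{1},d_{2})\mapsto(0,0,d_{1},d_{2})}$ the first slot of the target $D^{2}$ receives the $X_{3}$-direction of $D^{4}\{(1,4),(2,4)\}$, i.e.\ the $\eta$-direction, while the second slot receives the $X_{4}$-direction, i.e.\ the strong difference $\zeta^{\overset{\cdot}{-}}(\xi_{1},\xi_{2})$; whereas the stated right-hand morphism $\mathrm{Ass}_{M}^{1,1}\circ(\zeta^{\overset{\cdot}{-}}\times\mathrm{id})$ feeds $\zeta^{\overset{\cdot}{-}}(\xi_{1},\xi_{2})$ into the first slot and $\eta$ into the second. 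So either an extra twist $\mathrm{id}_{M^{M}}\otimes\mathcal{W}_{(d_{1},d_{2})\mapsto(d_{2},d_{1})}$ must appear somewhere, or the conventions for $\mathrm{Ass}^{2,1}$, $\mathrm{Ass}^{1,1}$ and $\zeta^{\underset{3}{\overset{\cdot}{-}}}$ must be matched more carefully than you have done (your own part (2), by contrast, does come out in the stated order). Until you actually perform this verification --- including the comparison of the $X_{3}X_{4}$-coefficient with the second-order part of $\mathrm{Ass}_{M}^{1,1}$, which requires the compatibility of $\mathrm{Ass}$ with itself and with limits, not mere bookkeeping --- what you have is a plausible outline rather than a proof.
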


\begin{notation}
We introduce the following fifteen morphisms:

\begin{enumerate}
\item
\begin{align*}
\chi^{\ast_{1}}  & :\left(  \underset{1}{M^{M}\otimes\mathcal{W}_{D}}\right)
_{\mathrm{id}_{M}}\times\left(  \underset{2}{M^{M}\otimes\mathcal{W}_{D}%
}\right)  _{\mathrm{id}_{M}}\times\left(  \underset{3}{M^{M}\otimes
\mathcal{W}_{D}}\right)  _{\mathrm{id}_{M}}\rightarrow\\
& \left(  \underset{1}{M^{M}\otimes\mathcal{W}_{D}}\right)  _{\mathrm{id}_{M}}%
\end{align*}
as the canonical projection.

\item
\begin{align*}
\chi^{\ast_{2}}  & :\left(  \underset{1}{M^{M}\otimes\mathcal{W}_{D}}\right)
_{\mathrm{id}_{M}}\times\left(  \underset{2}{M^{M}\otimes\mathcal{W}_{D}%
}\right)  _{\mathrm{id}_{M}}\times\left(  \underset{3}{M^{M}\otimes
\mathcal{W}_{D}}\right)  _{\mathrm{id}_{M}}\rightarrow\\
& \left(  \underset{2}{M^{M}\otimes\mathcal{W}_{D}}\right)  _{\mathrm{id}_{M}}%
\end{align*}
as the canonical projection.

\item
\begin{align*}
\chi^{\ast_{3}}  & :\left(  \underset{1}{M^{M}\otimes\mathcal{W}_{D}}\right)
_{\mathrm{id}_{M}}\times\left(  \underset{2}{M^{M}\otimes\mathcal{W}_{D}%
}\right)  _{\mathrm{id}_{M}}\times\left(  \underset{3}{M^{M}\otimes
\mathcal{W}_{D}}\right)  _{\mathrm{id}_{M}}\rightarrow\\
& \left(  \underset{3}{M^{M}\otimes\mathcal{W}_{D}}\right)  _{\mathrm{id}_{M}}%
\end{align*}
as the canonical projection.

\item
\begin{align*}
\chi^{\ast_{12}}  & :\left(  \underset{1}{M^{M}\otimes\mathcal{W}_{D}}\right)
_{\mathrm{id}_{M}}\times\left(  \underset{2}{M^{M}\otimes\mathcal{W}_{D}%
}\right)  _{\mathrm{id}_{M}}\times\left(  \underset{3}{M^{M}\otimes
\mathcal{W}_{D}}\right)  _{\mathrm{id}_{M}}\\
& \rightarrow\left(  M^{M}\otimes\mathcal{W}_{D^{2}}\right)  _{\mathrm{id}%
_{M}}%
\end{align*}
as
\begin{align*}
& \left(  \underset{1}{M^{M}\otimes\mathcal{W}_{D}}\right)  _{\mathrm{id}_{M}%
}\times\left(  \underset{2}{M^{M}\otimes\mathcal{W}_{D}}\right)
_{\mathrm{id}_{M}}\times\left(  \underset{3}{M^{M}\otimes\mathcal{W}_{D}%
}\right)  _{\mathrm{id}_{M}}\\
& \rightarrow\left(  \underset{1}{M^{M}\otimes\mathcal{W}_{D}}\right)
_{\mathrm{id}_{M}}\times\left(  \underset{2}{M^{M}\otimes\mathcal{W}_{D}%
}\right)  _{\mathrm{id}_{M}}\,\underrightarrow{\mathrm{Ass}_{M}^{1,1}%
}\,\left(  M^{M}\otimes\mathcal{W}_{D^{2}}\right)  _{\mathrm{id}_{M}}%
\end{align*}

\item
\begin{align*}
\chi^{\ast_{21}}  & :\left(  \underset{1}{M^{M}\otimes\mathcal{W}_{D}}\right)
_{\mathrm{id}_{M}}\times\left(  \underset{2}{M^{M}\otimes\mathcal{W}_{D}%
}\right)  _{\mathrm{id}_{M}}\times\left(  \underset{3}{M^{M}\otimes
\mathcal{W}_{D}}\right)  _{\mathrm{id}_{M}}\\
& \rightarrow\left(  M^{M}\otimes\mathcal{W}_{D^{2}}\right)  _{\mathrm{id}%
_{M}}%
\end{align*}
as
\begin{align*}
& \left(  \underset{1}{M^{M}\otimes\mathcal{W}_{D}}\right)  _{\mathrm{id}_{M}%
}\times\left(  \underset{2}{M^{M}\otimes\mathcal{W}_{D}}\right)
_{\mathrm{id}_{M}}\times\left(  \underset{3}{M^{M}\otimes\mathcal{W}_{D}%
}\right)  _{\mathrm{id}_{M}}\\
& \rightarrow\left(  \underset{2}{M^{M}\otimes\mathcal{W}_{D}}\right)
_{\mathrm{id}_{M}}\times\left(  \underset{1}{M^{M}\otimes\mathcal{W}_{D}%
}\right)  _{\mathrm{id}_{M}}\,\underrightarrow{\mathrm{Ass}_{M}^{1,1}%
}\,\left(  M^{M}\otimes\mathcal{W}_{D^{2}}\right)  _{\mathrm{id}_{M}}\\
& \underrightarrow{\mathrm{id}_{M^{M}}\otimes\mathcal{W}_{\left(  d_{1}%
,d_{2}\right)  \in D^{2}\mapsto\left(  d_{2},d_{1}\right)  \in D^{2}}%
}\,\left(  M^{M}\otimes\mathcal{W}_{D^{2}}\right)  _{\mathrm{id}_{M}}%
\end{align*}

\item
\begin{align*}
\chi^{\ast_{13}}  & :\left(  \underset{1}{M^{M}\otimes\mathcal{W}_{D}}\right)
_{\mathrm{id}_{M}}\times\left(  \underset{2}{M^{M}\otimes\mathcal{W}_{D}%
}\right)  _{\mathrm{id}_{M}}\times\left(  \underset{3}{M^{M}\otimes
\mathcal{W}_{D}}\right)  _{\mathrm{id}_{M}}\\
& \rightarrow\left(  M^{M}\otimes\mathcal{W}_{D^{2}}\right)  _{\mathrm{id}%
_{M}}%
\end{align*}
as
\begin{align*}
& \left(  \underset{1}{M^{M}\otimes\mathcal{W}_{D}}\right)  _{\mathrm{id}_{M}%
}\times\left(  \underset{2}{M^{M}\otimes\mathcal{W}_{D}}\right)
_{\mathrm{id}_{M}}\times\left(  \underset{3}{M^{M}\otimes\mathcal{W}_{D}%
}\right)  _{\mathrm{id}_{M}}\\
& \rightarrow\left(  \underset{1}{M^{M}\otimes\mathcal{W}_{D}}\right)
_{\mathrm{id}_{M}}\times\left(  \underset{3}{M^{M}\otimes\mathcal{W}_{D}%
}\right)  _{\mathrm{id}_{M}}\,\underrightarrow{\mathrm{Ass}_{M}^{1,1}%
}\,\left(  M^{M}\otimes\mathcal{W}_{D^{2}}\right)  _{\mathrm{id}_{M}}%
\end{align*}

\item
\begin{align*}
\chi^{\ast_{31}}  & :\left(  \underset{1}{M^{M}\otimes\mathcal{W}_{D}}\right)
_{\mathrm{id}_{M}}\times\left(  \underset{2}{M^{M}\otimes\mathcal{W}_{D}%
}\right)  _{\mathrm{id}_{M}}\times\left(  \underset{3}{M^{M}\otimes
\mathcal{W}_{D}}\right)  _{\mathrm{id}_{M}}\\
& \rightarrow\left(  M^{M}\otimes\mathcal{W}_{D^{2}}\right)  _{\mathrm{id}%
_{M}}%
\end{align*}
as
\begin{align*}
& \left(  \underset{1}{M^{M}\otimes\mathcal{W}_{D}}\right)  _{\mathrm{id}_{M}%
}\times\left(  \underset{2}{M^{M}\otimes\mathcal{W}_{D}}\right)
_{\mathrm{id}_{M}}\times\left(  \underset{3}{M^{M}\otimes\mathcal{W}_{D}%
}\right)  _{\mathrm{id}_{M}}\\
& \rightarrow\left(  \underset{3}{M^{M}\otimes\mathcal{W}_{D}}\right)
_{\mathrm{id}_{M}}\times\left(  \underset{1}{M^{M}\otimes\mathcal{W}_{D}%
}\right)  _{\mathrm{id}_{M}}\,\underrightarrow{\mathrm{Ass}_{M}^{1,1}%
}\,\left(  M^{M}\otimes\mathcal{W}_{D^{2}}\right)  _{\mathrm{id}_{M}}\\
& \underrightarrow{\mathrm{id}_{M^{M}}\otimes\mathcal{W}_{\left(  d_{1}%
,d_{2}\right)  \in D^{2}\mapsto\left(  d_{2},d_{1}\right)  \in D^{2}}%
}\,\left(  M^{M}\otimes\mathcal{W}_{D^{2}}\right)  _{\mathrm{id}_{M}}%
\end{align*}

\item
\begin{align*}
\chi^{\ast_{23}}  & :\left(  \underset{1}{M^{M}\otimes\mathcal{W}_{D}}\right)
_{\mathrm{id}_{M}}\times\left(  \underset{2}{M^{M}\otimes\mathcal{W}_{D}%
}\right)  _{\mathrm{id}_{M}}\times\left(  \underset{3}{M^{M}\otimes
\mathcal{W}_{D}}\right)  _{\mathrm{id}_{M}}\\
& \rightarrow\left(  M^{M}\otimes\mathcal{W}_{D^{2}}\right)  _{\mathrm{id}%
_{M}}%
\end{align*}
as
\begin{align*}
& \left(  \underset{1}{M^{M}\otimes\mathcal{W}_{D}}\right)  _{\mathrm{id}_{M}%
}\times\left(  \underset{2}{M^{M}\otimes\mathcal{W}_{D}}\right)
_{\mathrm{id}_{M}}\times\left(  \underset{3}{M^{M}\otimes\mathcal{W}_{D}%
}\right)  _{\mathrm{id}_{M}}\\
& \rightarrow\left(  \underset{2}{M^{M}\otimes\mathcal{W}_{D}}\right)
_{\mathrm{id}_{M}}\times\left(  \underset{3}{M^{M}\otimes\mathcal{W}_{D}%
}\right)  _{\mathrm{id}_{M}}\,\underrightarrow{\mathrm{Ass}_{M}^{1,1}%
}\,\left(  M^{M}\otimes\mathcal{W}_{D^{2}}\right)  _{\mathrm{id}_{M}}%
\end{align*}

\item
\begin{align*}
\chi^{\ast_{32}}  & :\left(  \underset{1}{M^{M}\otimes\mathcal{W}_{D}}\right)
_{\mathrm{id}_{M}}\times\left(  \underset{2}{M^{M}\otimes\mathcal{W}_{D}%
}\right)  _{\mathrm{id}_{M}}\times\left(  \underset{3}{M^{M}\otimes
\mathcal{W}_{D}}\right)  _{\mathrm{id}_{M}}\\
& \rightarrow\left(  M^{M}\otimes\mathcal{W}_{D^{2}}\right)  _{\mathrm{id}%
_{M}}%
\end{align*}
as
\begin{align*}
& \left(  \underset{1}{M^{M}\otimes\mathcal{W}_{D}}\right)  _{\mathrm{id}_{M}%
}\times\left(  \underset{2}{M^{M}\otimes\mathcal{W}_{D}}\right)
_{\mathrm{id}_{M}}\times\left(  \underset{3}{M^{M}\otimes\mathcal{W}_{D}%
}\right)  _{\mathrm{id}_{M}}\\
& \rightarrow\left(  \underset{3}{M^{M}\otimes\mathcal{W}_{D}}\right)
_{\mathrm{id}_{M}}\times\left(  \underset{2}{M^{M}\otimes\mathcal{W}_{D}%
}\right)  _{\mathrm{id}_{M}}\,\underrightarrow{\mathrm{Ass}_{M}^{1,1}%
}\,\left(  M^{M}\otimes\mathcal{W}_{D^{2}}\right)  _{\mathrm{id}_{M}}\\
& \underrightarrow{\mathrm{id}_{M^{M}}\otimes\mathcal{W}_{\left(  d_{1}%
,d_{2}\right)  \in D^{2}\mapsto\left(  d_{2},d_{1}\right)  \in D^{2}}%
}\,\left(  M^{M}\otimes\mathcal{W}_{D^{2}}\right)  _{\mathrm{id}_{M}}%
\end{align*}

\item
\begin{align*}
\chi^{\ast_{123}}  & :\left(  \underset{1}{M^{M}\otimes\mathcal{W}_{D}%
}\right)  _{\mathrm{id}_{M}}\times\left(  \underset{2}{M^{M}\otimes
\mathcal{W}_{D}}\right)  _{\mathrm{id}_{M}}\times\left(  \underset{3}%
{M^{M}\otimes\mathcal{W}_{D}}\right)  _{\mathrm{id}_{M}}\\
& \rightarrow\left(  M^{M}\otimes\mathcal{W}_{D^{3}}\right)  _{\mathrm{id}%
_{M}}%
\end{align*}
as
\begin{align*}
\mathrm{Ass}_{M}^{1,1,1}  & :\left(  \underset{1}{M^{M}\otimes\mathcal{W}_{D}%
}\right)  _{\mathrm{id}_{M}}\times\left(  \underset{2}{M^{M}\otimes
\mathcal{W}_{D}}\right)  _{\mathrm{id}_{M}}\times\left(  \underset{3}%
{M^{M}\otimes\mathcal{W}_{D}}\right)  _{\mathrm{id}_{M}}\\
& \rightarrow\left(  M^{M}\otimes\mathcal{W}_{D^{3}}\right)  _{\mathrm{id}%
_{M}}%
\end{align*}

\item
\begin{align*}
\chi^{\ast_{132}}  & :\left(  \underset{1}{M^{M}\otimes\mathcal{W}_{D}%
}\right)  _{\mathrm{id}_{M}}\times\left(  \underset{2}{M^{M}\otimes
\mathcal{W}_{D}}\right)  _{\mathrm{id}_{M}}\times\left(  \underset{3}%
{M^{M}\otimes\mathcal{W}_{D}}\right)  _{\mathrm{id}_{M}}\\
& \rightarrow\left(  M^{M}\otimes\mathcal{W}_{D^{3}}\right)  _{\mathrm{id}%
_{M}}%
\end{align*}
as
\begin{align*}
& \left(  \underset{1}{M^{M}\otimes\mathcal{W}_{D}}\right)  _{\mathrm{id}_{M}%
}\times\left(  \underset{2}{M^{M}\otimes\mathcal{W}_{D}}\right)
_{\mathrm{id}_{M}}\times\left(  \underset{3}{M^{M}\otimes\mathcal{W}_{D}%
}\right)  _{\mathrm{id}_{M}}\\
& \rightarrow\left(  \underset{1}{M^{M}\otimes\mathcal{W}_{D}}\right)
_{\mathrm{id}_{M}}\times\left(  \underset{3}{M^{M}\otimes\mathcal{W}_{D}%
}\right)  _{\mathrm{id}_{M}}\times\left(  \underset{2}{M^{M}\otimes
\mathcal{W}_{D}}\right)  _{\mathrm{id}_{M}}\,\underrightarrow{\mathrm{Ass}%
_{M}^{1,1,1}}\\
& \left(  M^{M}\otimes\mathcal{W}_{D^{3}}\right)  _{\mathrm{id}_{M}%
}\,\underrightarrow{\mathrm{id}_{M^{M}}\otimes\mathcal{W}_{(d_{1},d_{2}%
,d_{3})\in D^{3}\mapsto(d_{1},d_{3},d_{2})\in D^{3}}}\,\left(  M^{M}%
\otimes\mathcal{W}_{D^{3}}\right)  _{\mathrm{id}_{M}}%
\end{align*}

\item
\begin{align*}
\chi^{\ast_{213}}  & :\left(  \underset{1}{M^{M}\otimes\mathcal{W}_{D}%
}\right)  _{\mathrm{id}_{M}}\times\left(  \underset{2}{M^{M}\otimes
\mathcal{W}_{D}}\right)  _{\mathrm{id}_{M}}\times\left(  \underset{3}%
{M^{M}\otimes\mathcal{W}_{D}}\right)  _{\mathrm{id}_{M}}\\
& \rightarrow\left(  M^{M}\otimes\mathcal{W}_{D^{3}}\right)  _{\mathrm{id}%
_{M}}%
\end{align*}
as
\begin{align*}
& \left(  \underset{1}{M^{M}\otimes\mathcal{W}_{D}}\right)  _{\mathrm{id}_{M}%
}\times\left(  \underset{2}{M^{M}\otimes\mathcal{W}_{D}}\right)
_{\mathrm{id}_{M}}\times\left(  \underset{3}{M^{M}\otimes\mathcal{W}_{D}%
}\right)  _{\mathrm{id}_{M}}\\
& \rightarrow\left(  \underset{2}{M^{M}\otimes\mathcal{W}_{D}}\right)
_{\mathrm{id}_{M}}\times\left(  \underset{1}{M^{M}\otimes\mathcal{W}_{D}%
}\right)  _{\mathrm{id}_{M}}\times\left(  \underset{3}{M^{M}\otimes
\mathcal{W}_{D}}\right)  _{\mathrm{id}_{M}}\,\underrightarrow{\mathrm{Ass}%
_{M}^{1,1,1}}\\
& \left(  M^{M}\otimes\mathcal{W}_{D^{3}}\right)  _{\mathrm{id}_{M}%
}\,\underrightarrow{\mathrm{id}_{M^{M}}\otimes\mathcal{W}_{(d_{1},d_{2}%
,d_{3})\in D^{3}\mapsto(d_{2},d_{1},d_{3})\in D^{3}}}\,\left(  M^{M}%
\otimes\mathcal{W}_{D^{3}}\right)  _{\mathrm{id}_{M}}%
\end{align*}

\item
\begin{align*}
\chi^{\ast_{231}}  & :\left(  \underset{1}{M^{M}\otimes\mathcal{W}_{D}%
}\right)  _{\mathrm{id}_{M}}\times\left(  \underset{2}{M^{M}\otimes
\mathcal{W}_{D}}\right)  _{\mathrm{id}_{M}}\times\left(  \underset{3}%
{M^{M}\otimes\mathcal{W}_{D}}\right)  _{\mathrm{id}_{M}}\\
& \rightarrow\left(  M^{M}\otimes\mathcal{W}_{D^{3}}\right)  _{\mathrm{id}%
_{M}}%
\end{align*}
as
\begin{align*}
& \left(  \underset{1}{M^{M}\otimes\mathcal{W}_{D}}\right)  _{\mathrm{id}_{M}%
}\times\left(  \underset{2}{M^{M}\otimes\mathcal{W}_{D}}\right)
_{\mathrm{id}_{M}}\times\left(  \underset{3}{M^{M}\otimes\mathcal{W}_{D}%
}\right)  _{\mathrm{id}_{M}}\\
& \rightarrow\left(  \underset{2}{M^{M}\otimes\mathcal{W}_{D}}\right)
_{\mathrm{id}_{M}}\times\left(  \underset{3}{M^{M}\otimes\mathcal{W}_{D}%
}\right)  _{\mathrm{id}_{M}}\times\left(  \underset{1}{M^{M}\otimes
\mathcal{W}_{D}}\right)  _{\mathrm{id}_{M}}\,\underrightarrow{\mathrm{Ass}%
_{M}^{1,1,1}}\\
& \left(  M^{M}\otimes\mathcal{W}_{D^{3}}\right)  _{\mathrm{id}_{M}%
}\,\underrightarrow{\mathrm{id}_{M^{M}}\otimes\mathcal{W}_{(d_{1},d_{2}%
,d_{3})\in D^{3}\mapsto(d_{2},d_{3},d_{1})\in D^{3}}}\,\left(  M^{M}%
\otimes\mathcal{W}_{D^{3}}\right)  _{\mathrm{id}_{M}}%
\end{align*}

\item
\begin{align*}
\chi^{\ast_{312}}  & :\left(  \underset{1}{M^{M}\otimes\mathcal{W}_{D}%
}\right)  _{\mathrm{id}_{M}}\times\left(  \underset{2}{M^{M}\otimes
\mathcal{W}_{D}}\right)  _{\mathrm{id}_{M}}\times\left(  \underset{3}%
{M^{M}\otimes\mathcal{W}_{D}}\right)  _{\mathrm{id}_{M}}\\
& \rightarrow\left(  M^{M}\otimes\mathcal{W}_{D^{3}}\right)  _{\mathrm{id}%
_{M}}%
\end{align*}
as
\begin{align*}
& \left(  \underset{1}{M^{M}\otimes\mathcal{W}_{D}}\right)  _{\mathrm{id}_{M}%
}\times\left(  \underset{2}{M^{M}\otimes\mathcal{W}_{D}}\right)
_{\mathrm{id}_{M}}\times\left(  \underset{3}{M^{M}\otimes\mathcal{W}_{D}%
}\right)  _{\mathrm{id}_{M}}\\
& \rightarrow\left(  \underset{3}{M^{M}\otimes\mathcal{W}_{D}}\right)
_{\mathrm{id}_{M}}\times\left(  \underset{1}{M^{M}\otimes\mathcal{W}_{D}%
}\right)  _{\mathrm{id}_{M}}\times\left(  \underset{2}{M^{M}\otimes
\mathcal{W}_{D}}\right)  _{\mathrm{id}_{M}}\,\underrightarrow{\mathrm{Ass}%
_{M}^{1,1,1}}\\
& \left(  M^{M}\otimes\mathcal{W}_{D^{3}}\right)  _{\mathrm{id}_{M}%
}\,\underrightarrow{\mathrm{id}_{M^{M}}\otimes\mathcal{W}_{(d_{1},d_{2}%
,d_{3})\in D^{3}\mapsto(d_{3},d_{1},d_{2})\in D^{3}}}\,\left(  M^{M}%
\otimes\mathcal{W}_{D^{3}}\right)  _{\mathrm{id}_{M}}%
\end{align*}

\item
\begin{align*}
\chi^{\ast_{321}}  & :\left(  \underset{1}{M^{M}\otimes\mathcal{W}_{D}%
}\right)  _{\mathrm{id}_{M}}\times\left(  \underset{2}{M^{M}\otimes
\mathcal{W}_{D}}\right)  _{\mathrm{id}_{M}}\times\left(  \underset{3}%
{M^{M}\otimes\mathcal{W}_{D}}\right)  _{\mathrm{id}_{M}}\\
& \rightarrow\left(  M^{M}\otimes\mathcal{W}_{D^{3}}\right)  _{\mathrm{id}%
_{M}}%
\end{align*}
as
\begin{align*}
& \left(  \underset{1}{M^{M}\otimes\mathcal{W}_{D}}\right)  _{\mathrm{id}_{M}%
}\times\left(  \underset{2}{M^{M}\otimes\mathcal{W}_{D}}\right)
_{\mathrm{id}_{M}}\times\left(  \underset{3}{M^{M}\otimes\mathcal{W}_{D}%
}\right)  _{\mathrm{id}_{M}}\\
& \rightarrow\left(  \underset{3}{M^{M}\otimes\mathcal{W}_{D}}\right)
_{\mathrm{id}_{M}}\times\left(  \underset{1}{M^{M}\otimes\mathcal{W}_{D}%
}\right)  _{\mathrm{id}_{M}}\times\left(  \underset{2}{M^{M}\otimes
\mathcal{W}_{D}}\right)  _{\mathrm{id}_{M}}\,\underrightarrow{\mathrm{Ass}%
_{M}^{1,1,1}}\\
& \left(  M^{M}\otimes\mathcal{W}_{D^{3}}\right)  _{\mathrm{id}_{M}%
}\,\underrightarrow{\mathrm{id}_{M^{M}}\otimes\mathcal{W}_{(d_{1},d_{2}%
,d_{3})\in D^{3}\mapsto(d_{3},d_{1},d_{2})\in D^{3}}}\,\left(  M^{M}%
\otimes\mathcal{W}_{D^{3}}\right)  _{\mathrm{id}_{M}}%
\end{align*}

\end{enumerate}
\end{notation}

\begin{lemma}
\label{t3.3.3}We have the following statements:

\begin{enumerate}
\item The composition of morphisms $\chi^{\ast_{321}}$ and
\[
\left(  M^{M}\otimes\mathcal{W}_{D^{3}}\right)  _{\mathrm{id}_{M}}%
\rightarrow\left(  M^{M}\otimes\mathcal{W}_{D^{3}\{(2,3)\}}\right)
_{\mathrm{id}_{M}}%
\]
is equivalent to the composition of morphisms $\chi^{\ast_{231}}$ and
\[
\left(  M^{M}\otimes\mathcal{W}_{D^{3}}\right)  _{\mathrm{id}_{M}}%
\rightarrow\left(  M^{M}\otimes\mathcal{W}_{D^{3}\{(2,3)\}}\right)
_{\mathrm{id}_{M}}%
\]
so that we have
\begin{align}
& \left(  \underset{1}{M^{M}\otimes\mathcal{W}_{D}}\right)  _{\mathrm{id}_{M}%
}\times\left(  \underset{2}{M^{M}\otimes\mathcal{W}_{D}}\right)
_{\mathrm{id}_{M}}\times\left(  \underset{3}{M^{M}\otimes\mathcal{W}_{D}%
}\right)  _{\mathrm{id}_{M}}\nonumber\\
& \underrightarrow{\left(  \chi^{\ast_{321}},\chi^{\ast_{231}}\right)
}\,\left(  M^{M}\otimes\mathcal{W}_{D^{3}}\right)  \times_{M^{M}%
\otimes\mathcal{W}_{D^{3}\{(2,3)\}}}\left(  M^{M}\otimes\mathcal{W}_{D^{3}%
}\right)  _{\mathrm{id}_{M}}\,\underrightarrow{\zeta^{\underset{1}%
{\overset{\cdot}{-}}}}\,\left(  M^{M}\otimes\mathcal{W}_{D^{2}}\right)
_{\mathrm{id}_{M}}\label{3.3.3.1}%
\end{align}
which is equivalent to
\begin{align}
& \left(  \underset{1}{M^{M}\otimes\mathcal{W}_{D}}\right)  _{\mathrm{id}_{M}%
}\times\left(  \underset{2}{M^{M}\otimes\mathcal{W}_{D}}\right)
_{\mathrm{id}_{M}}\times\left(  \underset{3}{M^{M}\otimes\mathcal{W}_{D}%
}\right)  _{\mathrm{id}_{M}}\nonumber\\
& \underrightarrow{\left(  \left(  \chi^{\ast_{32}},\chi^{\ast_{23}}\right)
,\chi^{\ast_{1}}\right)  }\nonumber\\
& \left(  \left(  M^{M}\otimes\mathcal{W}_{D^{2}}\right)  _{\mathrm{id}_{M}%
}\times_{M^{M}\otimes\mathcal{W}_{D\left(  2\right)  }}\left(  M^{M}%
\otimes\mathcal{W}_{D^{2}}\right)  _{\mathrm{id}_{M}}\right)  \times\left(
\underset{1}{M^{M}\otimes\mathcal{W}_{D}}\right)  _{\mathrm{id}_{M}%
}\nonumber\\
& \underrightarrow{\zeta^{\overset{\cdot}{-}}\times\mathrm{id}_{M^{M}%
\otimes\mathcal{W}_{D}}}\,\left(  M^{M}\otimes\mathcal{W}_{D}\right)
_{\mathrm{id}_{M}}\times\left(  \underset{1}{M^{M}\otimes\mathcal{W}_{D}%
}\right)  _{\mathrm{id}_{M}}\,\underrightarrow{\mathrm{Ass}_{M}^{1,1}%
}\,\left(  M^{M}\otimes\mathcal{W}_{D^{2}}\right)  _{\mathrm{id}_{M}%
}\nonumber\\
& \underrightarrow{\mathrm{id}_{M^{M}}\otimes\mathcal{W}_{\left(  d_{1}%
,d_{2}\right)  \in D^{2}\mapsto\left(  d_{2},d_{1}\right)  \in D^{2}}%
}\,\left(  M^{M}\otimes\mathcal{W}_{D^{2}}\right)  _{\mathrm{id}_{M}%
}\label{3.3.3.2}%
\end{align}

\item The composition of morphisms $\chi^{\ast_{132}}$ and
\[
\left(  M^{M}\otimes\mathcal{W}_{D^{3}}\right)  _{\mathrm{id}_{M}}%
\rightarrow\left(  M^{M}\otimes\mathcal{W}_{D^{3}\{(2,3)\}}\right)
_{\mathrm{id}_{M}}%
\]
is equivalent to the composition of morphisms $\chi^{\ast_{123}}$ and
\[
\left(  M^{M}\otimes\mathcal{W}_{D^{3}}\right)  _{\mathrm{id}_{M}}%
\rightarrow\left(  M^{M}\otimes\mathcal{W}_{D^{3}\{(2,3)\}}\right)
_{\mathrm{id}_{M}}%
\]
so that we have
\begin{align}
& \left(  \underset{1}{M^{M}\otimes\mathcal{W}_{D}}\right)  _{\mathrm{id}_{M}%
}\times\left(  \underset{2}{M^{M}\otimes\mathcal{W}_{D}}\right)
_{\mathrm{id}_{M}}\times\left(  \underset{3}{M^{M}\otimes\mathcal{W}_{D}%
}\right)  _{\mathrm{id}_{M}}\nonumber\\
& \underrightarrow{\left(  \chi^{\ast_{132}},\chi^{\ast_{123}}\right)
}\,\left(  M^{M}\otimes\mathcal{W}_{D^{3}}\right)  _{\mathrm{id}_{M}}%
\times_{M^{M}\otimes\mathcal{W}_{D^{3}\{(2,3)\}}}\left(  M^{M}\otimes
\mathcal{W}_{D^{3}}\right)  _{\mathrm{id}_{M}}\,\nonumber\\
& \underrightarrow{\zeta^{\underset{1}{\overset{\cdot}{-}}}}\,\left(
M^{M}\otimes\mathcal{W}_{D^{2}}\right)  _{\mathrm{id}_{M}}\label{3.3.3.3}%
\end{align}
which is equivalent to
\begin{align}
& \left(  \underset{1}{M^{M}\otimes\mathcal{W}_{D}}\right)  _{\mathrm{id}_{M}%
}\times\left(  \underset{2}{M^{M}\otimes\mathcal{W}_{D}}\right)
_{\mathrm{id}_{M}}\times\left(  \underset{3}{M^{M}\otimes\mathcal{W}_{D}%
}\right)  _{\mathrm{id}_{M}}\nonumber\\
& \underrightarrow{\left(  \chi^{\ast_{1}},\left(  \chi^{\ast_{32}},\chi
^{\ast_{23}}\right)  \right)  }\nonumber\\
& \left(  \underset{1}{M^{M}\otimes\mathcal{W}_{D}}\right)  _{\mathrm{id}_{M}%
}\times\left(  \left(  M^{M}\otimes\mathcal{W}_{D^{2}}\right)  _{\mathrm{id}%
_{M}}\times_{M^{M}\otimes\mathcal{W}_{D\left(  2\right)  }}\left(
M^{M}\otimes\mathcal{W}_{D^{2}}\right)  _{\mathrm{id}_{M}}\right) \nonumber\\
& \underrightarrow{\mathrm{id}_{M^{M}\otimes\mathcal{W}_{D}}\times
\zeta^{\overset{\cdot}{-}}}\,\left(  \underset{1}{M^{M}\otimes\mathcal{W}_{D}%
}\right)  _{\mathrm{id}_{M}}\times\left(  M^{M}\otimes\mathcal{W}_{D}\right)
_{\mathrm{id}_{M}}\,\underrightarrow{\mathrm{Ass}_{M}^{1,1}}\,\left(
M^{M}\otimes\mathcal{W}_{D^{2}}\right)  _{\mathrm{id}_{M}}\label{3.3.3.4}%
\end{align}

\item The composition of (\ref{3.3.3.2}) and
\[
\left(  M^{M}\otimes\mathcal{W}_{D^{2}}\right)  _{\mathrm{id}_{M}}%
\rightarrow\left(  M^{M}\otimes\mathcal{W}_{D\left(  2\right)  }\right)
_{\mathrm{id}_{M}}%
\]
is equivalent to the composition of (\ref{3.3.3.4}) and
\[
\left(  M^{M}\otimes\mathcal{W}_{D^{2}}\right)  _{\mathrm{id}_{M}}%
\rightarrow\left(  M^{M}\otimes\mathcal{W}_{D\left(  2\right)  }\right)
_{\mathrm{id}_{M}}%
\]
so that we have
\begin{align}
& \left(  \underset{1}{M^{M}\otimes\mathcal{W}_{D}}\right)  _{\mathrm{id}_{M}%
}\times\left(  \underset{2}{M^{M}\otimes\mathcal{W}_{D}}\right)
_{\mathrm{id}_{M}}\times\left(  \underset{3}{M^{M}\otimes\mathcal{W}_{D}%
}\right)  _{\mathrm{id}_{M}}\nonumber\\
& \underrightarrow{\left(  \left(  \chi^{\ast_{321}},\chi^{\ast_{231}}\right)
,\left(  \chi^{\ast_{132}},\chi^{\ast_{123}}\right)  \right)  }\,\nonumber\\
& \left(
\begin{array}
[c]{c}%
\left(  \underset{321}{M^{M}\otimes\mathcal{W}_{D^{3}}}\right)  _{\mathrm{id}%
_{M}}\\
\times_{M^{M}\otimes\mathcal{W}_{D^{3}\left\{  \left(  2,3\right)  \right\}
}}\\
\left(  \underset{231}{M^{M}\otimes\mathcal{W}_{D^{3}}}\right)  _{\mathrm{id}%
_{M}}%
\end{array}
\right)  \times_{M^{M}\otimes\mathcal{W}_{D\left(  2\right)  }}\left(
\begin{array}
[c]{c}%
\left(  \underset{132}{M^{M}\otimes\mathcal{W}_{D^{3}}}\right)  _{\mathrm{id}%
_{M}}\\
\times_{M^{M}\otimes\mathcal{W}_{D^{3}\left\{  \left(  2,3\right)  \right\}
}}\\
\left(  \underset{123}{M^{M}\otimes\mathcal{W}_{D^{3}}}\right)  _{\mathrm{id}%
_{M}}%
\end{array}
\right) \nonumber\\
& \underrightarrow{\zeta^{\underset{2}{\overset{\cdot}{-}}}\times
_{M^{M}\otimes\mathcal{W}_{D\left(  2\right)  }}\zeta^{\underset{2}%
{\overset{\cdot}{-}}}}\,\left(  M^{M}\otimes\mathcal{W}_{D^{2}}\right)
_{\mathrm{id}_{M}}\times_{M^{M}\otimes\mathcal{W}_{D\left(  2\right)  }%
}\left(  M^{M}\otimes\mathcal{W}_{D^{2}}\right)  _{\mathrm{id}_{M}%
}\,\underrightarrow{\zeta^{\overset{\cdot}{-}}}\,\left(  M^{M}\otimes
\mathcal{W}_{D}\right)  _{\mathrm{id}_{M}}\label{3.3.3.5}%
\end{align}
which is equivalent to
\begin{align}
& \left(  \underset{1}{M^{M}\otimes\mathcal{W}_{D}}\right)  _{\mathrm{id}_{M}%
}\times\left(  \underset{2}{M^{M}\otimes\mathcal{W}_{D}}\right)
_{\mathrm{id}_{M}}\times\left(  \underset{3}{M^{M}\otimes\mathcal{W}_{D}%
}\right)  _{\mathrm{id}_{M}}\,\underrightarrow{\mathrm{id}_{\left(
\underset{1}{M^{M}\otimes\mathcal{W}_{D}}\right)  _{\mathrm{id}_{M}}}\times
L_{M}}\nonumber\\
& \left(  \underset{1}{M^{M}\otimes\mathcal{W}_{D}}\right)  _{\mathrm{id}_{M}%
}\times\left(  M^{M}\otimes\mathcal{W}_{D}\right)  _{\mathrm{id}_{M}%
}\,\underrightarrow{L_{M}}\,\left(  M^{M}\otimes\mathcal{W}_{D}\right)
_{\mathrm{id}_{M}}\label{3.3.3.6}%
\end{align}

\end{enumerate}
\end{lemma}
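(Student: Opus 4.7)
The plan is to prove the three parts in sequence, with Parts 1 and 2 being direct applications of Proposition~\ref{t3.3.2}(2) and Part 3 being the combination of those equivalences with Theorem~\ref{t3.3.1}.

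For Part 1, I would begin by verifying that $\chi^{\ast_{321}}$ and $\chi^{\ast_{231}}$ become equal after composition with the canonical projection onto $\left(M^{M}\otimes\mathcal{W}_{D^{3}\{(2,3)\}}\right)_{\mathrm{id}_{M}}$: a direct computation from the defining formulas shows that the two morphisms differ only by a transposition of the $D^{3}$-factors landing in positions~$2$ and~$3$, and this transposition becomes the identity once one imposes $d_{2}d_{3}=0$. Hence the pair $(\chi^{\ast_{321}},\chi^{\ast_{231}})$ factors through the displayed fiber product and (\ref{3.3.3.1}) is well-defined. To pass from (\ref{3.3.3.1}) to (\ref{3.3.3.2}), I would recognize that the input data fit Proposition~\ref{t3.3.2}(2) with $\chi^{\ast_{1}}$ playing the role of the single $D$-factor and $(\chi^{\ast_{32}},\chi^{\ast_{23}})$ playing the role of the $\left(M^{M}\otimes\mathcal{W}_{D^{2}}\right)_{\mathrm{id}_{M}}\times_{M^{M}\otimes\mathcal{W}_{D(2)}}\left(M^{M}\otimes\mathcal{W}_{D^{2}}\right)_{\mathrm{id}_{M}}$-valued component; the trailing swap $\mathcal{W}_{(d_{1},d_{2})\mapsto(d_{2},d_{1})}$ in (\ref{3.3.3.2}) is precisely the bookkeeping needed because the $X_{1}$-slot in $\chi^{\ast_{321}}$ sits to the right of the bracket pair, whereas Proposition~\ref{t3.3.2}(2) naturally places it to the left.

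Part 2 follows the same template. Since $\chi^{\ast_{132}}$ and $\chi^{\ast_{123}}$ both already place $X_{1}$ on the left, no trailing swap is required, and the agreement of these two morphisms on $D^{3}\{(2,3)\}$ is again immediate from the formulas. Proposition~\ref{t3.3.2}(2) then yields the equivalence of (\ref{3.3.3.3}) and (\ref{3.3.3.4}) directly. For Part 3, I would substitute the equivalences from Parts 1 and 2 into the two arms of (\ref{3.3.3.5}) and then compose with $\zeta^{\overset{\cdot}{-}}$. By Theorem~\ref{t3.3.1}, the morphism $\zeta^{\overset{\cdot}{-}}$, applied to a pair whose two components are the two arms of $\mathrm{Ass}_{M}^{1,1}$ differing by the $D^{2}$-swap, reproduces the Lie bracket $L_{M}$ of the two $\mathcal{W}_{D}$-valued quantities fed into $\mathrm{Ass}_{M}^{1,1}$. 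The first arm, obtained from (\ref{3.3.3.2}), yields $\mathrm{Ass}_{M}^{1,1}\bigl(L_{M}(\chi^{\ast_{2}},\chi^{\ast_{3}}),\chi^{\ast_{1}}\bigr)$ composed with the swap, while the second, obtained from (\ref{3.3.3.4}), yields $\mathrm{Ass}_{M}^{1,1}\bigl(\chi^{\ast_{1}},L_{M}(\chi^{\ast_{2}},\chi^{\ast_{3}})\bigr)$. A second application of Theorem~\ref{t3.3.1} then identifies the resulting composition with $L_{M}\bigl(\chi^{\ast_{1}},L_{M}(\chi^{\ast_{2}},\chi^{\ast_{3}})\bigr)$, which is exactly (\ref{3.3.3.6}).

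The main obstacle is purely combinatorial rather than conceptual: I have to track many permutations of $D^{3}$-factors, verify that each pair of $\chi^{\ast_{ijk}}$'s genuinely coincides after the appropriate projection, and match the swap conventions in Theorem~\ref{t3.3.1} against the swap conventions implicit in the definitions of the $\chi^{\ast_{ijk}}$, so that the two invocations of Theorem~\ref{t3.3.1} (one inside Proposition~\ref{t3.3.2}(2), one for the outer bracket in Part 3) both reproduce $L_{M}$ with matching signs and orderings. Once this symbol-shuffling is pinned down, each step reduces to a single invocation of either Proposition~\ref{t3.3.2} or Theorem~\ref{t3.3.1}, and no further ingredient is needed.
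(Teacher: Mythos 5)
Your proposal is correct and follows the same route as the paper: the paper's own proof consists precisely of observing that the first two statements are instances of Proposition \ref{t3.3.2} (part 2, for the $D^{3}\{(2,3)\}$ fibre product and $\zeta^{\underset{1}{\overset{\cdot}{-}}}$) and that the third follows by feeding the resulting pair into Theorem \ref{t3.3.1}, which is exactly your two-stage application of $L_{M}$. Your additional bookkeeping (the agreement of $\chi^{\ast_{321}}$ with $\chi^{\ast_{231}}$ and of $\chi^{\ast_{132}}$ with $\chi^{\ast_{123}}$ after restriction to $D^{3}\{(2,3)\}$, and the role of the trailing $\mathcal{W}_{(d_{1},d_{2})\mapsto(d_{2},d_{1})}$ swap) is just the content the paper leaves implicit in the phrase ``should be obvious'' attached to Proposition \ref{t3.3.2}.
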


\begin{proof}
The first and the second statements follow from Proposition \ref{t3.3.2}. The
last statement follows from Theorem \ref{t3.3.1}.
\end{proof}

\begin{lemma}
\label{t3.3.4}We have the following statements:

\begin{enumerate}
\item The composition of morphisms $\chi^{\ast_{132}}$ and
\[
\left(  M^{M}\otimes\mathcal{W}_{D^{3}}\right)  _{\mathrm{id}_{M}}%
\rightarrow\left(  M^{M}\otimes\mathcal{W}_{D^{3}\{(1,3)\}}\right)
_{\mathrm{id}_{M}}%
\]
in succession is equivalent to the composition of morphisms $\chi^{\ast_{312}%
}$ and
\[
\left(  M^{M}\otimes\mathcal{W}_{D^{3}}\right)  _{\mathrm{id}_{M}}%
\rightarrow\left(  M^{M}\otimes\mathcal{W}_{D^{3}\{(1,3)\}}\right)
_{\mathrm{id}_{M}}%
\]
in succession, so that we have
\begin{align}
& \left(  \underset{1}{M^{M}\otimes\mathcal{W}_{D}}\right)  _{\mathrm{id}_{M}%
}\times\left(  \underset{2}{M^{M}\otimes\mathcal{W}_{D}}\right)
_{\mathrm{id}_{M}}\times\left(  \underset{3}{M^{M}\otimes\mathcal{W}_{D}%
}\right)  _{\mathrm{id}_{M}}\nonumber\\
& \underrightarrow{\left(  \chi^{\ast_{132}},\chi^{\ast_{312}}\right)
}\,\left(  M^{M}\otimes\mathcal{W}_{D^{3}}\right)  _{\mathrm{id}_{M}}%
\times_{M^{M}\otimes\mathcal{W}_{D^{3}\{(1,3)\}}}\left(  M^{M}\otimes
\mathcal{W}_{D^{3}}\right)  _{\mathrm{id}_{M}}\,\underrightarrow
{\zeta^{\underset{2}{\overset{\cdot}{-}}}}\,\left(  M^{M}\otimes
\mathcal{W}_{D^{2}}\right)  _{\mathrm{id}_{M}}\label{3.3.4.1}%
\end{align}
which is equivalent to
\begin{align}
& \left(  \underset{1}{M^{M}\otimes\mathcal{W}_{D}}\right)  _{\mathrm{id}_{M}%
}\times\left(  \underset{2}{M^{M}\otimes\mathcal{W}_{D}}\right)
_{\mathrm{id}_{M}}\times\left(  \underset{3}{M^{M}\otimes\mathcal{W}_{D}%
}\right)  _{\mathrm{id}_{M}}\nonumber\\
& \underrightarrow{\left(  \left(  \chi^{\ast_{13}},\chi^{\ast_{31}}\right)
,\chi^{\ast_{2}}\right)  }\nonumber\\
& \left(  \left(  M^{M}\otimes\mathcal{W}_{D^{2}}\right)  _{\mathrm{id}_{M}%
}\times_{M^{M}\otimes\mathcal{W}_{D\left(  2\right)  }}\left(  M^{M}%
\otimes\mathcal{W}_{D^{2}}\right)  _{\mathrm{id}_{M}}\right)  \times\left(
\underset{2}{M^{M}\otimes\mathcal{W}_{D}}\right)  _{\mathrm{id}_{M}%
}\nonumber\\
& \underrightarrow{\zeta^{\overset{\cdot}{-}}\times\mathrm{id}_{M^{M}%
\otimes\mathcal{W}_{D}}}\,\left(  M^{M}\otimes\mathcal{W}_{D}\right)
_{\mathrm{id}_{M}}\times\left(  \underset{2}{M^{M}\otimes\mathcal{W}_{D}%
}\right)  _{\mathrm{id}_{M}}\,\underrightarrow{\mathrm{Ass}_{M}^{1,1}%
}\,\left(  M^{M}\otimes\mathcal{W}_{D^{2}}\right)  _{\mathrm{id}_{M}%
}\nonumber\\
& \underrightarrow{\mathrm{id}_{M^{M}}\otimes\mathcal{W}_{\left(  d_{1}%
,d_{2}\right)  \in D^{2}\mapsto\left(  d_{2},d_{1}\right)  \in D^{2}}%
}\,\left(  M^{M}\otimes\mathcal{W}_{D^{2}}\right)  _{\mathrm{id}_{M}%
}\label{3.3.4.2}%
\end{align}

\item The composition of morphisms $\chi^{\ast_{213}}$ and
\[
\left(  M^{M}\otimes\mathcal{W}_{D^{3}}\right)  _{\mathrm{id}_{M}}%
\rightarrow\left(  M^{M}\otimes\mathcal{W}_{D^{3}\{(1,3)\}}\right)
_{\mathrm{id}_{M}}%
\]
is equivalent to the composition of morphisms $\chi^{\ast_{231}}$ and
\[
\left(  M^{M}\otimes\mathcal{W}_{D^{3}}\right)  _{\mathrm{id}_{M}}%
\rightarrow\left(  M^{M}\otimes\mathcal{W}_{D^{3}\{(1,3)\}}\right)
_{\mathrm{id}_{M}}%
\]
in succession, so that we have
\begin{align}
& \left(  \underset{1}{M^{M}\otimes\mathcal{W}_{D}}\right)  _{\mathrm{id}_{M}%
}\times\left(  \underset{2}{M^{M}\otimes\mathcal{W}_{D}}\right)
_{\mathrm{id}_{M}}\times\left(  \underset{3}{M^{M}\otimes\mathcal{W}_{D}%
}\right)  _{\mathrm{id}_{M}}\nonumber\\
& \underrightarrow{\left(  \chi^{\ast_{213}},\chi^{\ast_{231}}\right)
}\,\left(  M^{M}\otimes\mathcal{W}_{D^{3}}\right)  _{\mathrm{id}_{M}}%
\times_{M^{M}\otimes\mathcal{W}_{D^{3}\{(2,3)\}}}\left(  M^{M}\otimes
\mathcal{W}_{D^{3}}\right)  _{\mathrm{id}_{M}}\,\underrightarrow
{\zeta^{\underset{2}{\overset{\cdot}{-}}}}\,\left(  M^{M}\otimes
\mathcal{W}_{D^{2}}\right)  _{\mathrm{id}_{M}}\label{3.3.4.3}%
\end{align}
which is equivalent to
\begin{align}
& \left(  \underset{1}{M^{M}\otimes\mathcal{W}_{D}}\right)  _{\mathrm{id}_{M}%
}\times\left(  \underset{2}{M^{M}\otimes\mathcal{W}_{D}}\right)
_{\mathrm{id}_{M}}\times\left(  \underset{3}{M^{M}\otimes\mathcal{W}_{D}%
}\right)  _{\mathrm{id}_{M}}\nonumber\\
& \underrightarrow{\left(  \chi^{\ast_{2}},\left(  \chi^{\ast_{13}},\chi
^{\ast_{31}}\right)  \right)  }\nonumber\\
& \left(  \underset{1}{M^{M}\otimes\mathcal{W}_{D}}\right)  _{\mathrm{id}_{M}%
}\times\left(  \left(  M^{M}\otimes\mathcal{W}_{D^{2}}\right)  _{\mathrm{id}%
_{M}}\times_{M^{M}\otimes\mathcal{W}_{D\left(  2\right)  }}\left(
M^{M}\otimes\mathcal{W}_{D^{2}}\right)  _{\mathrm{id}_{M}}\right) \nonumber\\
& \underrightarrow{\mathrm{id}_{M^{M}\otimes\mathcal{W}_{D}}\times
\zeta^{\overset{\cdot}{-}}}\,\left(  \underset{1}{M^{M}\otimes\mathcal{W}_{D}%
}\right)  _{\mathrm{id}_{M}}\times\left(  M^{M}\otimes\mathcal{W}_{D}\right)
_{\mathrm{id}_{M}}\,\underrightarrow{\mathrm{Ass}_{M}^{1,1}}\,\left(
M^{M}\otimes\mathcal{W}_{D^{2}}\right)  _{\mathrm{id}_{M}}\label{3.3.4.4}%
\end{align}

\item The composition of (\ref{3.3.4.2}) and
\[
\left(  M^{M}\otimes\mathcal{W}_{D^{2}}\right)  _{\mathrm{id}_{M}}%
\rightarrow\left(  M^{M}\otimes\mathcal{W}_{D\left(  2\right)  }\right)
_{\mathrm{id}_{M}}%
\]
is equivalent to the composition of (\ref{3.3.4.4}) and
\[
\left(  M^{M}\otimes\mathcal{W}_{D^{2}}\right)  _{\mathrm{id}_{M}}%
\rightarrow\left(  M^{M}\otimes\mathcal{W}_{D\left(  2\right)  }\right)
_{\mathrm{id}_{M}}%
\]
so that we have
\begin{align}
& \left(  \underset{1}{M^{M}\otimes\mathcal{W}_{D}}\right)  _{\mathrm{id}_{M}%
}\times\left(  \underset{2}{M^{M}\otimes\mathcal{W}_{D}}\right)
_{\mathrm{id}_{M}}\times\left(  \underset{3}{M^{M}\otimes\mathcal{W}_{D}%
}\right)  _{\mathrm{id}_{M}}\nonumber\\
& \underrightarrow{\left(  \left(  \chi^{\ast_{132}},\chi^{\ast_{312}}\right)
,\left(  \chi^{\ast_{213}},\chi^{\ast_{231}}\right)  \right)  }\,\nonumber\\
& \left(
\begin{array}
[c]{c}%
\left(  \underset{132}{M^{M}\otimes\mathcal{W}_{D^{3}}}\right)  _{\mathrm{id}%
_{M}}\\
\times_{M^{M}\otimes\mathcal{W}_{D^{3}\left\{  \left(  1,,3\right)  \right\}
}}\\
\left(  \underset{312}{M^{M}\otimes\mathcal{W}_{D^{3}}}\right)  _{\mathrm{id}%
_{M}}%
\end{array}
\right)  \times_{M^{M}\otimes\mathcal{W}_{D\left(  2\right)  }}\left(
\begin{array}
[c]{c}%
\left(  \underset{213}{M^{M}\otimes\mathcal{W}_{D^{3}}}\right)  _{\mathrm{id}%
_{M}}\\
\times_{M^{M}\otimes\mathcal{W}_{D^{3}\left\{  \left(  1,3\right)  \right\}
}}\\
\left(  \underset{231}{M^{M}\otimes\mathcal{W}_{D^{3}}}\right)  _{\mathrm{id}%
_{M}}%
\end{array}
\right) \nonumber\\
& \underrightarrow{\zeta^{\underset{2}{\overset{\cdot}{-}}}\times
_{M^{M}\otimes\mathcal{W}_{D\left(  2\right)  }}\zeta^{\underset{2}%
{\overset{\cdot}{-}}}}\,\left(  M^{M}\otimes\mathcal{W}_{D^{2}}\right)
_{\mathrm{id}_{M}}\times_{M^{M}\otimes\mathcal{W}_{D\left(  2\right)  }%
}\left(  M^{M}\otimes\mathcal{W}_{D^{2}}\right)  _{\mathrm{id}_{M}%
}\,\underrightarrow{\zeta^{\overset{\cdot}{-}}}\,\left(  M^{M}\otimes
\mathcal{W}_{D}\right)  _{\mathrm{id}_{M}}\label{3.3.4.5}%
\end{align}
which is equivalent to
\begin{align}
& \left(  \underset{1}{M^{M}\otimes\mathcal{W}_{D}}\right)  _{\mathrm{id}_{M}%
}\times\left(  \underset{2}{M^{M}\otimes\mathcal{W}_{D}}\right)
_{\mathrm{id}_{M}}\times\left(  \underset{3}{M^{M}\otimes\mathcal{W}_{D}%
}\right)  _{\mathrm{id}_{M}}\,\nonumber\\
& \left(  \underset{2}{M^{M}\otimes\mathcal{W}_{D}}\right)  _{\mathrm{id}_{M}%
}\times\left(  \underset{3}{M^{M}\otimes\mathcal{W}_{D}}\right)
_{\mathrm{id}_{M}}\times\left(  \underset{1}{M^{M}\otimes\mathcal{W}_{D}%
}\right)  _{\mathrm{id}_{M}}\,\underrightarrow{\mathrm{id}_{\left(
\underset{2}{M^{M}\otimes\mathcal{W}_{D}}\right)  _{\mathrm{id}_{M}}}\times
L_{M}}\nonumber\\
& \left(  \underset{2}{M^{M}\otimes\mathcal{W}_{D}}\right)  _{\mathrm{id}_{M}%
}\times\left(  M^{M}\otimes\mathcal{W}_{D}\right)  _{\mathrm{id}_{M}%
}\,\underrightarrow{L_{M}}\,\left(  M^{M}\otimes\mathcal{W}_{D}\right)
_{\mathrm{id}_{M}}\label{3.3.4.6}%
\end{align}

\end{enumerate}
\end{lemma}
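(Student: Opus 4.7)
The plan is to mirror the proof of Lemma \ref{t3.3.3} essentially verbatim, since Lemma \ref{t3.3.4} is obtained from Lemma \ref{t3.3.3} by cyclically permuting the roles of the three factors (the pullback over $D^{3}\{(1,3)\}$ now plays the role previously played by $D^{3}\{(2,3)\}$, and the spectating factor is $\chi^{\ast_{2}}$ rather than $\chi^{\ast_{1}}$). So I would organize the proof in three steps matching items 1, 2, 3 of the statement.

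For step 1, I would appeal directly to Proposition \ref{t3.3.2}. The equivalence of \eqref{3.3.4.1} and \eqref{3.3.4.2} asserts that, after projecting the $D^{3}$-pullback component along $\mathcal{W}_{D^{3}\{(1,3)\}}$, the triple Kock--Lawvere associativity $\mathrm{Ass}_{M}^{1,1,1}$ unpacks as the composition of $\mathrm{Ass}_{M}^{2,1}$ (or $\mathrm{Ass}_{M}^{1,2}$, depending on whether we group on the left or right) with $\zeta^{\overset{\cdot}{\underset{2}{-}}}$. This is exactly what the first half of Proposition \ref{t3.3.2} records, with the outer factor being $\chi^{\ast_{2}}$ (rather than $\chi^{\ast_{1}}$ or $\chi^{\ast_{3}}$); the transposition $\mathcal{W}_{(d_{1},d_{2})\mapsto (d_{2},d_{1})}$ is then forced by the ordering of indices in $\chi^{\ast_{132}}$ versus $\chi^{\ast_{312}}$. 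Step 2 is strictly analogous, using the second half of Proposition \ref{t3.3.2} applied to the pair $(\chi^{\ast_{213}},\chi^{\ast_{231}})$ on which the shared middle coordinate is the second factor.

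For step 3, once we have reduced both (\ref{3.3.4.2}) and (\ref{3.3.4.4}) to a pair of morphisms into $(M^{M}\otimes\mathcal{W}_{D^{2}})_{\mathrm{id}_{M}}$ built from $\zeta^{\overset{\cdot}{-}}$ and the associativity $\mathrm{Ass}_{M}^{1,1}$, we post-compose with the projection to $(M^{M}\otimes\mathcal{W}_{D(2)})_{\mathrm{id}_{M}}$ and feed the result into the definition of the bracket $L_{M}$ supplied by Theorem \ref{t3.3.1}. The content of Theorem \ref{t3.3.1} is precisely that the composite of $\zeta^{\overset{\cdot}{-}}$ with the transposition-twisted $\mathrm{Ass}_{M}^{1,1}$ is $L_{M}$; iterating this once on the inner pair and once on the outer pair identifies (\ref{3.3.4.5}) with (\ref{3.3.4.6}). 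Note that in the last line of \eqref{3.3.4.6} we first permute the three factors to $(2,3,1)$ before applying $\mathrm{id}\times L_{M}$, reflecting the fact that the $\zeta^{\overset{\cdot}{\underset{2}{-}}}$-based fibre product mixes the roles of factors $1$ and $2$ compared to the $\zeta^{\overset{\cdot}{\underset{1}{-}}}$ case.

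The main obstacle is purely bookkeeping: one must track the four permutations $132,312,213,231$ and verify that the induced maps into $\mathcal{W}_{D^{3}\{(1,3)\}}$ really do coincide on the nose, so that the fibre-product pair $(\chi^{\ast_{\sigma}},\chi^{\ast_{\tau}})$ is well-defined. This is where the specific choice of ``inner coordinate'' matters, and a single index slip would propagate through the transposition and ruin the matching with $L_{M}$. However, no genuinely new computation is required; every equivalence invoked is already established by the cited Proposition \ref{t3.3.2} and Theorem \ref{t3.3.1}, so the proof will conclude with the short remark: \emph{The first and second statements follow from Proposition \ref{t3.3.2}, and the third follows from Theorem \ref{t3.3.1}.}
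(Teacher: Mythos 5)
Your proposal matches the paper's own proof, which consists precisely of the remark that the first and second statements follow from Proposition \ref{t3.3.2} and the third from Theorem \ref{t3.3.1}; the extra bookkeeping you describe about tracking the permutations $132,312,213,231$ is sound but not spelled out in the paper either. No discrepancy to report.
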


\begin{proof}
The first and the second statements follow from Proposition \ref{t3.3.2}. The
last statement follows from Theorem \ref{t3.3.1}.
\end{proof}

\begin{lemma}
\label{t3.3.5}We have the following statements:

\begin{enumerate}
\item The composition of morphisms $\chi^{\ast_{213}}$ and
\[
\left(  M^{M}\otimes\mathcal{W}_{D^{3}}\right)  _{\mathrm{id}_{M}}%
\rightarrow\left(  M^{M}\otimes\mathcal{W}_{D^{3}\{(1,2)\}}\right)
_{\mathrm{id}_{M}}%
\]
is equivalent to the composition of morphisms $\chi^{\ast_{123}}$ and
\[
\left(  M^{M}\otimes\mathcal{W}_{D^{3}}\right)  _{\mathrm{id}_{M}}%
\rightarrow\left(  M^{M}\otimes\mathcal{W}_{D^{3}\{(1,2)\}}\right)
_{\mathrm{id}_{M}}%
\]
so that we have
\begin{align}
& \left(  \underset{1}{M^{M}\otimes\mathcal{W}_{D}}\right)  _{\mathrm{id}_{M}%
}\times\left(  \underset{2}{M^{M}\otimes\mathcal{W}_{D}}\right)
_{\mathrm{id}_{M}}\times\left(  \underset{3}{M^{M}\otimes\mathcal{W}_{D}%
}\right)  _{\mathrm{id}_{M}}\nonumber\\
& \underrightarrow{\left(  \chi^{\ast_{213}},\chi^{\ast_{123}}\right)
}\,\left(  M^{M}\otimes\mathcal{W}_{D^{3}}\right)  _{\mathrm{id}_{M}}%
\times_{M^{M}\otimes\mathcal{W}_{D^{3}\{(2,3)\}}}\left(  M^{M}\otimes
\mathcal{W}_{D^{3}}\right)  _{\mathrm{id}_{M}}\,\nonumber\\
& \underrightarrow{\zeta^{\underset{3}{\overset{\cdot}{-}}}}\,\left(
M^{M}\otimes\mathcal{W}_{D^{2}}\right)  _{\mathrm{id}_{M}}\label{3.3.5.1}%
\end{align}
which is equivalent to
\begin{align}
& \left(  \underset{1}{M^{M}\otimes\mathcal{W}_{D}}\right)  _{\mathrm{id}_{M}%
}\times\left(  \underset{2}{M^{M}\otimes\mathcal{W}_{D}}\right)
_{\mathrm{id}_{M}}\times\left(  \underset{3}{M^{M}\otimes\mathcal{W}_{D}%
}\right)  _{\mathrm{id}_{M}}\nonumber\\
& \underrightarrow{\left(  \left(  \chi^{\ast_{31}},\chi^{\ast_{12}}\right)
,\chi^{\ast_{3}}\right)  }\nonumber\\
& \left(  \left(  M^{M}\otimes\mathcal{W}_{D^{2}}\right)  _{\mathrm{id}_{M}%
}\times_{M^{M}\otimes\mathcal{W}_{D\left(  2\right)  }}\left(  M^{M}%
\otimes\mathcal{W}_{D^{2}}\right)  _{\mathrm{id}_{M}}\right)  \times\left(
\underset{1}{M^{M}\otimes\mathcal{W}_{D}}\right)  _{\mathrm{id}_{M}%
}\nonumber\\
& \underrightarrow{\zeta^{\overset{\cdot}{-}}\times\mathrm{id}_{M^{M}%
\otimes\mathcal{W}_{D}}}\,\left(  M^{M}\otimes\mathcal{W}_{D}\right)
_{\mathrm{id}_{M}}\times\left(  \underset{1}{M^{M}\otimes\mathcal{W}_{D}%
}\right)  _{\mathrm{id}_{M}}\,\underrightarrow{\mathrm{Ass}_{M}^{1,1}%
}\,\left(  M^{M}\otimes\mathcal{W}_{D^{2}}\right)  _{\mathrm{id}_{M}%
}\nonumber\\
& \underrightarrow{\mathrm{id}_{M^{M}}\otimes\mathcal{W}_{\left(  d_{1}%
,d_{2}\right)  \in D^{2}\mapsto\left(  d_{2},d_{1}\right)  \in D^{2}}%
}\,\left(  M^{M}\otimes\mathcal{W}_{D^{2}}\right)  _{\mathrm{id}_{M}%
}\label{3.3.5.2}%
\end{align}

\item The composition of morphisms $\chi^{\ast_{321}}$ and
\[
\left(  M^{M}\otimes\mathcal{W}_{D^{3}}\right)  _{\mathrm{id}_{M}}%
\rightarrow\left(  M^{M}\otimes\mathcal{W}_{D^{3}\{(2,3)\}}\right)
_{\mathrm{id}_{M}}%
\]
is equivalent to the composition of morphisms $\chi^{\ast_{312}}$ and
\[
\left(  M^{M}\otimes\mathcal{W}_{D^{3}}\right)  _{\mathrm{id}_{M}}%
\rightarrow\left(  M^{M}\otimes\mathcal{W}_{D^{3}\{(2,3)\}}\right)
_{\mathrm{id}_{M}}%
\]
so that we have
\begin{align}
& \left(  \underset{1}{M^{M}\otimes\mathcal{W}_{D}}\right)  _{\mathrm{id}_{M}%
}\times\left(  \underset{2}{M^{M}\otimes\mathcal{W}_{D}}\right)
_{\mathrm{id}_{M}}\times\left(  \underset{3}{M^{M}\otimes\mathcal{W}_{D}%
}\right)  _{\mathrm{id}_{M}}\nonumber\\
& \underrightarrow{\left(  \chi^{\ast_{321}},\chi^{\ast_{312}}\right)
}\,\left(  M^{M}\otimes\mathcal{W}_{D^{3}}\right)  _{\mathrm{id}_{M}}%
\times_{M^{M}\otimes\mathcal{W}_{D^{3}\{(2,3)\}}}\left(  M^{M}\otimes
\mathcal{W}_{D^{3}}\right)  _{\mathrm{id}_{M}}\,\underrightarrow
{\zeta^{\underset{3}{\overset{\cdot}{-}}}}\,\left(  M^{M}\otimes
\mathcal{W}_{D^{2}}\right)  _{\mathrm{id}_{M}}\label{3.3.5.3}%
\end{align}
which is equivalent to
\begin{align}
& \left(  \underset{1}{M^{M}\otimes\mathcal{W}_{D}}\right)  _{\mathrm{id}_{M}%
}\times\left(  \underset{2}{M^{M}\otimes\mathcal{W}_{D}}\right)
_{\mathrm{id}_{M}}\times\left(  \underset{3}{M^{M}\otimes\mathcal{W}_{D}%
}\right)  _{\mathrm{id}_{M}}\nonumber\\
& \underrightarrow{\left(  \chi^{\ast_{3}},\left(  \chi^{\ast_{21}},\chi
^{\ast_{12}}\right)  \right)  }\nonumber\\
& \left(  \underset{3}{M^{M}\otimes\mathcal{W}_{D}}\right)  _{\mathrm{id}_{M}%
}\times\left(  \left(  M^{M}\otimes\mathcal{W}_{D^{2}}\right)  _{\mathrm{id}%
_{M}}\times_{M^{M}\otimes\mathcal{W}_{D\left(  2\right)  }}\left(
M^{M}\otimes\mathcal{W}_{D^{2}}\right)  _{\mathrm{id}_{M}}\right) \nonumber\\
& \underrightarrow{\mathrm{id}_{M^{M}\otimes\mathcal{W}_{D}}\times
\zeta^{\overset{\cdot}{-}}}\,\left(  \underset{3}{M^{M}\otimes\mathcal{W}_{D}%
}\right)  _{\mathrm{id}_{M}}\times\left(  M^{M}\otimes\mathcal{W}_{D}\right)
\,\underrightarrow{\mathrm{Ass}_{M}^{1,1}}\,\left(  M^{M}\otimes
\mathcal{W}_{D^{2}}\right)  _{\mathrm{id}_{M}}\label{3.3.5.4}%
\end{align}

\item The composition of (\ref{3.3.5.2}) and
\[
\left(  M^{M}\otimes\mathcal{W}_{D^{2}}\right)  _{\mathrm{id}_{M}}%
\rightarrow\left(  M^{M}\otimes\mathcal{W}_{D\left(  2\right)  }\right)
_{\mathrm{id}_{M}}%
\]
is equivalent to the composition of (\ref{3.3.5.4}) and
\[
\left(  M^{M}\otimes\mathcal{W}_{D^{2}}\right)  _{\mathrm{id}_{M}}%
\rightarrow\left(  M^{M}\otimes\mathcal{W}_{D\left(  2\right)  }\right)
_{\mathrm{id}_{M}}%
\]
in succession, so that we have
\begin{align}
& \left(  \underset{1}{M^{M}\otimes\mathcal{W}_{D}}\right)  _{\mathrm{id}_{M}%
}\times\left(  \underset{2}{M^{M}\otimes\mathcal{W}_{D}}\right)
_{\mathrm{id}_{M}}\times\left(  \underset{3}{M^{M}\otimes\mathcal{W}_{D}%
}\right)  _{\mathrm{id}_{M}}\nonumber\\
& \underrightarrow{\left(  \left(  \chi^{\ast_{213}},\chi^{\ast_{123}}\right)
,\left(  \chi^{\ast_{321}},\chi^{\ast_{312}}\right)  \right)  }\,\nonumber\\
& \left(
\begin{array}
[c]{c}%
\left(  \underset{213}{M^{M}\otimes\mathcal{W}_{D^{3}}}\right)  _{\mathrm{id}%
_{M}}\\
\times_{M^{M}\otimes\mathcal{W}_{D^{3}\left\{  \left(  1,,3\right)  \right\}
}}\\
\left(  \underset{_{123}}{M^{M}\otimes\mathcal{W}_{D^{3}}}\right)
_{\mathrm{id}_{M}}%
\end{array}
\right)  \times_{M^{M}\otimes\mathcal{W}_{D\left(  2\right)  }}\left(
\begin{array}
[c]{c}%
\left(  \underset{321}{M^{M}\otimes\mathcal{W}_{D^{3}}}\right)  _{\mathrm{id}%
_{M}}\\
\times_{M^{M}\otimes\mathcal{W}_{D^{3}\left\{  \left(  1,3\right)  \right\}
}}\\
\left(  \underset{312}{M^{M}\otimes\mathcal{W}_{D^{3}}}\right)  _{\mathrm{id}%
_{M}}%
\end{array}
\right) \nonumber\\
& \underrightarrow{\zeta^{\underset{3}{\overset{\cdot}{-}}}\times
_{M^{M}\otimes\mathcal{W}_{D\left(  2\right)  }}\zeta^{\underset{3}%
{\overset{\cdot}{-}}}}\,\left(  M^{M}\otimes\mathcal{W}_{D^{2}}\right)
_{\mathrm{id}_{M}}\times_{M^{M}\otimes\mathcal{W}_{D\left(  2\right)  }%
}\left(  M^{M}\otimes\mathcal{W}_{D^{2}}\right)  _{\mathrm{id}_{M}%
}\,\underrightarrow{\zeta^{\overset{\cdot}{-}}}\,\left(  M^{M}\otimes
\mathcal{W}_{D}\right)  _{\mathrm{id}_{M}}\label{3.3.5.5}%
\end{align}
which is equivalent to
\begin{align}
& \left(  \underset{1}{M^{M}\otimes\mathcal{W}_{D}}\right)  _{\mathrm{id}_{M}%
}\times\left(  \underset{2}{M^{M}\otimes\mathcal{W}_{D}}\right)
_{\mathrm{id}_{M}}\times\left(  \underset{3}{M^{M}\otimes\mathcal{W}_{D}%
}\right)  _{\mathrm{id}_{M}}\nonumber\\
& \left(  \underset{3}{M^{M}\otimes\mathcal{W}_{D}}\right)  _{\mathrm{id}_{M}%
}\times\left(  \underset{1}{M^{M}\otimes\mathcal{W}_{D}}\right)
_{\mathrm{id}_{M}}\times\left(  \underset{2}{M^{M}\otimes\mathcal{W}_{D}%
}\right)  _{\mathrm{id}_{M}}\,\underrightarrow{\mathrm{id}_{\left(
\underset{3}{M^{M}\otimes\mathcal{W}_{D}}\right)  _{\mathrm{id}_{M}}}\times
L_{M}}\nonumber\\
& \left(  \underset{3}{M^{M}\otimes\mathcal{W}_{D}}\right)  _{\mathrm{id}_{M}%
}\times\left(  M^{M}\otimes\mathcal{W}_{D}\right)  _{\mathrm{id}_{M}%
}\,\underrightarrow{L_{M}}\,\left(  M^{M}\otimes\mathcal{W}_{D}\right)
_{\mathrm{id}_{M}}\label{3.3.5.6}%
\end{align}

\end{enumerate}
\end{lemma}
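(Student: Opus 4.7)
The plan is to follow verbatim the template used in the proofs of Lemmas \ref{t3.3.3} and \ref{t3.3.4}, simply transposing everything to the third coordinate axis. Items (1) and (2) are pure coordinate bookkeeping and should fall out of Proposition \ref{t3.3.2}; item (3) should then be deduced from Theorem \ref{t3.3.1}.

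For item (1), I would first observe that $\chi^{\ast_{213}}$ and $\chi^{\ast_{123}}$ differ only by the transposition $(d_{1},d_{2},d_{3})\mapsto(d_{2},d_{1},d_{3})$ of the first two coordinates of $D^{3}$. The only monomial in $\mathcal{W}_{D^{3}}$ which records the order of those two factors is $d_{1}d_{2}$, and this is precisely the monomial that is killed when we pass to $\mathcal{W}_{D^{3}\{(1,2)\}}$. Consequently the two compositions into $\left(M^{M}\otimes\mathcal{W}_{D^{3}\{(1,2)\}}\right)_{\mathrm{id}_{M}}$ coincide, which is the first assertion of item (1). Invoking Proposition \ref{t3.3.2}(1) then rewrites $\zeta^{\underset{3}{\overset{\cdot}{-}}}\circ(\chi^{\ast_{213}},\chi^{\ast_{123}})$ as $\zeta^{\overset{\cdot}{-}}$ applied to the $(\ast_{1},\ast_{2})$-pair paired (by $\mathrm{Ass}_{M}^{1,1}$) with $\ast_{3}$, exactly as displayed in (\ref{3.3.5.2}). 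Item (2) is the same argument for $\chi^{\ast_{321}}$ and $\chi^{\ast_{312}}$, which also differ only by a transposition of the first two $D$-factors, so Proposition \ref{t3.3.2}(2) gives the rewriting of (\ref{3.3.5.3}) as (\ref{3.3.5.4}).

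For item (3), items (1) and (2) let me replace the two legs of the fibre product (\ref{3.3.5.5}) by (\ref{3.3.5.2}) and (\ref{3.3.5.4}) respectively. Each leg is then a $\zeta^{\overset{\cdot}{-}}$ applied to an associated pair, which by Theorem \ref{t3.3.1} is equivalent to $L_{M}$ applied to the same two vectors (with the built-in swap $(d_{1},d_{2})\mapsto(d_{2},d_{1})$ absorbed in the definition of $\zeta^{\overset{\cdot}{-}}$). The outer $\zeta^{\overset{\cdot}{-}}$, applied to the pair consisting of $\chi^{\ast_{3}}$ and the inner bracket $L_{M}(\ast_{1},\ast_{2})$, collapses in exactly the same way by Theorem \ref{t3.3.1}, producing $L_{M}(\ast_{3},L_{M}(\ast_{1},\ast_{2}))$, which is the content of (\ref{3.3.5.6}).

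The only genuine obstacle is the bookkeeping of which factor is first in each application of $\zeta^{\overset{\cdot}{-}}$, since Theorem \ref{t3.3.1} builds the transposition $(d_{1},d_{2})\mapsto(d_{2},d_{1})$ into its equivalence between (\ref{3.3.1.3}) and (\ref{3.3.1.4}). This is the same accounting already carried out in Lemmas \ref{t3.3.3} and \ref{t3.3.4}, where it produced $L_{M}(\ast_{1},L_{M}(\ast_{2},\ast_{3}))$ and $L_{M}(\ast_{2},L_{M}(\ast_{3},\ast_{1}))$ respectively. Here the cyclic shift of the roles $1\to 2\to 3$ gives $L_{M}(\ast_{3},L_{M}(\ast_{1},\ast_{2}))$, matching (\ref{3.3.5.6}) on the nose and completing the proof.
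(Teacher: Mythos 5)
Your proposal matches the paper's own proof, which consists precisely of deducing the first two statements from Proposition \ref{t3.3.2} and the third from Theorem \ref{t3.3.1}; the extra justification you supply (the two microcubes $\chi^{\ast_{213}}$ and $\chi^{\ast_{123}}$, resp.\ $\chi^{\ast_{321}}$ and $\chi^{\ast_{312}}$, differ only in the $d_{1}d_{2}$-coefficient, which is killed in $\mathcal{W}_{D^{3}\{(1,2)\}}$) is exactly what is implicit there. The bookkeeping you flag at the end is handled the same way as in Lemmas \ref{t3.3.3} and \ref{t3.3.4}, so nothing further is needed.
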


\begin{proof}
The first and the second statements follow from Proposition \ref{t3.3.2}. The
last statement follows from Theorem \ref{t3.3.1}.
\end{proof}

\begin{theorem}
\label{t3.3.6}(\underline{The conventional Jacobi Identity}) We have the
following two statements:

\begin{enumerate}
\item The three morphisms (\ref{3.3.3.5}), (\ref{3.3.4.5}) and (\ref{3.3.5.5})
sum up only to vanish.

\item The three morphisms (\ref{3.3.3.6}), (\ref{3.3.4.6}) and (\ref{3.3.5.6})
sum up only to vanish.
\end{enumerate}
\end{theorem}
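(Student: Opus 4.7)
The two statements are in fact equivalent to each other: the third statements of Lemmas \ref{t3.3.3}, \ref{t3.3.4} and \ref{t3.3.5} record precisely the equivalences (\ref{3.3.3.5})$\equiv$(\ref{3.3.3.6}), (\ref{3.3.4.5})$\equiv$(\ref{3.3.4.6}) and (\ref{3.3.5.5})$\equiv$(\ref{3.3.5.6}), so Part 2 follows from Part 1 at once (and vice versa). We therefore focus on Part 1.

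The plan is to derive Part 1 from the general Jacobi Identity (Theorem \ref{t3.16}), applied with $M^{M}$ in place of $M$. Since $M$ is microlinear and Weil-exponentiable, so is $M^{M}$, and the constructions of $\zeta^{\overset{\cdot}{-}}$, $\zeta^{\underset{1}{\overset{\cdot}{-}}}$, $\zeta^{\underset{2}{\overset{\cdot}{-}}}$, $\zeta^{\underset{3}{\overset{\cdot}{-}}}$, together with the big bracketed limit of Theorem \ref{t3.14}, all transfer verbatim to the function-space object. The six auxiliary morphisms $\chi^{\ast_{\sigma}}$ (for $\sigma$ ranging over the six orderings $123,132,213,231,312,321$ of $(1,2,3)$) are then assembled, by means of Proposition \ref{t3.3.2}, into a single universal morphism
\[
\Psi:\left(\underset{1}{M^{M}\otimes\mathcal{W}_{D}}\right)_{\mathrm{id}_{M}}\times\left(\underset{2}{M^{M}\otimes\mathcal{W}_{D}}\right)_{\mathrm{id}_{M}}\times\left(\underset{3}{M^{M}\otimes\mathcal{W}_{D}}\right)_{\mathrm{id}_{M}}\longrightarrow\bigl[\,\cdots\,\bigr]
\]
landing in the big bracketed limit object of Theorem \ref{t3.16} (read for $M^{M}$). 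To produce $\Psi$ one verifies, for each of the three outer fiber products $\times_{M^{M}\otimes\mathcal{W}_{D^{3}\oplus D^{3}}}$ appearing in that limit, that the two candidate pairs $(\chi^{\ast_{\sigma}},\chi^{\ast_{\sigma'}})$ have the same image in $M^{M}\otimes\mathcal{W}_{D^{3}\oplus D^{3}}$. Each such coincidence reduces, via Proposition \ref{t3.3.2} and the pullback diagrams of Propositions \ref{t3.7}, \ref{t3.8}, \ref{t3.9}, to a compatibility between the associativities $\mathrm{Ass}_{M}^{2,1}$, $\mathrm{Ass}_{M}^{1,2}$ and restriction along $\mathcal{W}_{D^{3}}\rightarrow\mathcal{W}_{D^{3}\{(i,j)\}}$, which is routine but combinatorially involved.

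Once $\Psi$ has been constructed, the three morphisms (\ref{3.3.3.5}), (\ref{3.3.4.5}), (\ref{3.3.5.5}) are recognized to be nothing but the three morphisms of Theorem \ref{t3.16} (applied to $M^{M}$) precomposed with $\Psi$. The general Jacobi Identity asserts that these three pre-composed morphisms already sum to zero, and composition with $\Psi$ preserves this, so Part 1 follows. The genuine work all lives in the construction of $\Psi$: tracking which $\chi^{\ast_{\sigma}}$ corresponds to which leg of the big bracketed limit is where one can most easily get lost, but each individual identification is forced by the definitions of $\zeta^{\underset{i}{\overset{\cdot}{-}}}$.
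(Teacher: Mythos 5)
Your proposal is correct and follows essentially the same route as the paper: the paper likewise assembles the six morphisms $\chi^{\ast_{\sigma}}$ into a single morphism from the triple product $\left(  \underset{1}{M^{M}\otimes\mathcal{W}_{D}}\right)  _{\mathrm{id}_{M}}\times\left(  \underset{2}{M^{M}\otimes\mathcal{W}_{D}}\right)  _{\mathrm{id}_{M}}\times\left(  \underset{3}{M^{M}\otimes\mathcal{W}_{D}}\right)  _{\mathrm{id}_{M}}$ into the big bracketed limit and obtains Part 1 by applying the general Jacobi identity (Theorem \ref{t3.16}) along that morphism, with Part 2 then following from Part 1 via Lemmas \ref{t3.3.3}, \ref{t3.3.4} and \ref{t3.3.5}. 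Your explicit remark that the compatibilities needed to induce the universal morphism $\Psi$ reduce to Proposition \ref{t3.3.2} and the pullback diagrams is a point the paper leaves tacit, but it is the same argument.
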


\begin{proof}
We apply the general Jacobi identity to the morphism
\begin{align*}
& \left(  \underset{1}{M^{M}\otimes\mathcal{W}_{D}}\right)  _{\mathrm{id}_{M}%
}\times\left(  \underset{2}{M^{M}\otimes\mathcal{W}_{D}}\right)
_{\mathrm{id}_{M}}\times\left(  \underset{3}{M^{M}\otimes\mathcal{W}_{D}%
}\right)  _{\mathrm{id}_{M}}\\
& \underrightarrow{\left[
\begin{array}
[c]{ccc}
&
\begin{array}
[c]{c}%
\left(
\begin{array}
[c]{c}%
\chi^{\ast_{321}}\\
\times_{M\otimes\mathcal{W}_{D^{3}\{(2,3)\}}}\\
\chi^{\ast_{231}}%
\end{array}
\right) \\
\times_{M\otimes\mathcal{W}_{D(2)}}\\
\left(
\begin{array}
[c]{c}%
\chi^{\ast_{132}}\\
\times_{M\otimes\mathcal{W}_{D^{3}\{(2,3)\}}}\\
\chi^{\ast_{123}}%
\end{array}
\right)
\end{array}
& \\%
\begin{array}
[c]{c}%
\times_{M\otimes\mathcal{W}_{D^{3}\oplus D^{3}}}\\
\,\\
\,
\end{array}
&  &
\begin{array}
[c]{c}%
\times_{M\otimes\mathcal{W}_{D^{3}\oplus D^{3}}}\\
\,\\
\,
\end{array}
\\%
\begin{array}
[c]{c}%
\left(
\begin{array}
[c]{c}%
\chi^{\ast_{132}}\\
\times_{M\otimes\mathcal{W}_{D^{3}\{(1,3)\}}}\\
\chi^{\ast_{312}}%
\end{array}
\right) \\
\times_{M\otimes\mathcal{W}_{D(2)}}\\
\left(
\begin{array}
[c]{c}%
\chi^{\ast_{213}}\\
\times_{M\otimes\mathcal{W}_{D^{3}\{(1,3)\}}}\\
\chi^{\ast_{231}}%
\end{array}
\right)
\end{array}
& \times_{M\otimes\mathcal{W}_{D^{3}\oplus D^{3}}} &
\begin{array}
[c]{c}%
\left(
\begin{array}
[c]{c}%
\chi^{\ast_{213}}\\
\times_{M\otimes\mathcal{W}_{D^{3}\{(1,2)\}}}\\
\chi^{\ast_{123}}%
\end{array}
\right) \\
\times_{M\otimes\mathcal{W}_{D(2)}}\\
\left(
\begin{array}
[c]{c}%
\chi^{\ast_{321}}\\
\times_{M\otimes\mathcal{W}_{D^{3}\{(1,2)\}}}\\
\chi^{\ast_{312}}%
\end{array}
\right)
\end{array}
\end{array}
\right]  }\\
& \left[
\begin{array}
[c]{ccc}
&
\begin{array}
[c]{c}%
\left(
\begin{array}
[c]{c}%
\underset{321}{\left(  M\otimes\mathcal{W}_{D^{3}}\right)  }\\
\times_{M\otimes\mathcal{W}_{D^{3}\{(2,3)\}}}\\
\underset{231}{\left(  M\otimes\mathcal{W}_{D^{3}}\right)  }%
\end{array}
\right) \\
\times_{M\otimes\mathcal{W}_{D(2)}}\\
\left(
\begin{array}
[c]{c}%
\underset{132}{\left(  M\otimes\mathcal{W}_{D^{3}}\right)  }\\
\times_{M\otimes\mathcal{W}_{D^{3}\{(2,3)\}}}\\
\underset{123}{\left(  M\otimes\mathcal{W}_{D^{3}}\right)  }%
\end{array}
\right)
\end{array}
& \\%
\begin{array}
[c]{c}%
\times_{M\otimes\mathcal{W}_{D^{3}\oplus D^{3}}}\\
\,\\
\,
\end{array}
&  &
\begin{array}
[c]{c}%
\times_{M\otimes\mathcal{W}_{D^{3}\oplus D^{3}}}\\
\,\\
\,
\end{array}
\\%
\begin{array}
[c]{c}%
\left(
\begin{array}
[c]{c}%
\underset{132}{\left(  M\otimes\mathcal{W}_{D^{3}}\right)  }\\
\times_{M\otimes\mathcal{W}_{D^{3}\{(1,3)\}}}\\
\underset{312}{\left(  M\otimes\mathcal{W}_{D^{3}}\right)  }%
\end{array}
\right) \\
\times_{M\otimes\mathcal{W}_{D(2)}}\\
\left(
\begin{array}
[c]{c}%
\underset{213}{\left(  M\otimes\mathcal{W}_{D^{3}}\right)  }\\
\times_{M\otimes\mathcal{W}_{D^{3}\{(1,3)\}}}\\
\underset{231}{\left(  M\otimes\mathcal{W}_{D^{3}}\right)  }%
\end{array}
\right)
\end{array}
& \times_{M\otimes\mathcal{W}_{D^{3}\oplus D^{3}}} &
\begin{array}
[c]{c}%
\left(
\begin{array}
[c]{c}%
\underset{213}{\left(  M\otimes\mathcal{W}_{D^{3}}\right)  }\\
\times_{M\otimes\mathcal{W}_{D^{3}\{(1,2)\}}}\\
\underset{123}{\left(  M\otimes\mathcal{W}_{D^{3}}\right)  }%
\end{array}
\right) \\
\times_{M\otimes\mathcal{W}_{D(2)}}\\
\left(
\begin{array}
[c]{c}%
\underset{321}{\left(  M\otimes\mathcal{W}_{D^{3}}\right)  }\\
\times_{M\otimes\mathcal{W}_{D^{3}\{(1,2)\}}}\\
\underset{312}{\left(  M\otimes\mathcal{W}_{D^{3}}\right)  }%
\end{array}
\right)
\end{array}
\end{array}
\right]
\end{align*}
so as to obtain the first statement. The second statement follows directly
from the first by Lemmas \ref{t3.3.3}, \ref{t3.3.4}\ and \ref{t3.3.5}.
\end{proof}

\end{document}